\documentclass[english,3p]{elsarticle}
\usepackage[T1]{fontenc}
\usepackage[utf8]{inputenc}
\usepackage{verbatim}
\usepackage{float}
\usepackage{units}
\usepackage{amsmath}
\usepackage{amsthm}
\usepackage{amssymb}
\usepackage{stmaryrd}
\usepackage{graphicx}
\usepackage{wasysym}

\makeatletter
\numberwithin{equation}{section}
\numberwithin{figure}{section}
\theoremstyle{plain}
\newtheorem{thm}{\protect\theoremname}
\theoremstyle{remark}
\newtheorem{rem}[thm]{\protect\remarkname}
\theoremstyle{plain}
\newtheorem{lem}[thm]{\protect\lemmaname}

\usepackage{algpseudocode,algorithm,algorithmicx}
\usepackage{tikz}
\usetikzlibrary{shapes,arrows,calc,decorations.pathreplacing}

\@ifundefined{showcaptionsetup}{}{%
 \PassOptionsToPackage{caption=false}{subfig}}
\usepackage{subfig}
\makeatother

\usepackage{babel}
\providecommand{\lemmaname}{Lemma}
\providecommand{\remarkname}{Remark}
\providecommand{\theoremname}{Theorem}

\begin{document}

\title{Positivity-preserving and entropy-bounded discontinuous Galerkin
method for the chemically reacting, compressible Euler equations.
Part I: The one-dimensional case}

\author{Eric J. Ching, Ryan F. Johnson, and Andrew D. Kercher}
\address{Laboratories for Computational Physics and Fluid Dynamics,  U.S. Naval Research Laboratory, 4555 Overlook Ave SW, Washington, DC 20375}

\begin{abstract}
In this paper, we develop a fully conservative, positivity-preserving,
and entropy-bounded discontinuous Galerkin scheme for simulating the
multicomponent, chemically reacting, compressible Euler equations
with complex thermodynamics. The proposed formulation is an extension
of the fully conservative, high-order numerical method previously
developed by Johnson and Kercher {[}\emph{J. Comput. Phys.}, 423 (2020),
109826{]} that maintains pressure equilibrium between adjacent elements.
In this first part of our two-part paper, we focus on the one-dimensional
case. Our methodology is rooted in the minimum entropy principle satisfied
by entropy solutions to the multicomponent, compressible Euler equations,
which was proved by Gouasmi et al. {[}\emph{ESAIM: Math. Model. Numer.
Anal.}, 54 (2020), 373-{}-389{]} for nonreacting flows. We first show
that the minimum entropy principle holds in the reacting case as well.
Next, we introduce the ingredients, including a simple linear-scaling
limiter, required for the discrete solution to have nonnegative species
concentrations, positive density, positive pressure, and bounded entropy.
We also discuss how to retain the aforementioned ability to preserve
pressure equilibrium between elements. Operator splitting is employed
to handle stiff chemical reactions. To guarantee discrete satisfaction
of the minimum entropy principle in the reaction step, we develop
an entropy-stable discontinuous Galerkin method based on diagonal-norm
summation-by-parts operators for solving ordinary differential equations.
The developed formulation is used to compute canonical one-dimensional
test cases, namely thermal-bubble advection, multicomponent shock-tube
flow, and a moving hydrogen-oxygen detonation wave with detailed chemistry.
We demonstrate that the developed formulation can achieve optimal
high-order convergence in smooth flows. Furthermore, we find that
the enforcement of an entropy bound can considerably reduce the large-scale
nonlinear instabilities that emerge when only the positivity property
is enforced, to an even greater extent than in the monocomponent,
calorically perfect case. Finally, mass, total energy, and atomic
elements are shown to be discretely conserved.
\end{abstract}
\begin{keyword}
Discontinuous Galerkin method; Combustion; Detonation; Minimum entropy
principle; Positivity-preserving; Entropy stability; Summation-by-parts
\end{keyword}
\maketitle
\global\long\def\middlebar{\,\middle|\,}%
\global\long\def\average#1{\left\{  \!\!\left\{  #1\right\}  \!\!\right\}  }%
\global\long\def\expnumber#1#2{{#1}\mathrm{e}{#2}}%
 \newcommand*{\horzbar}{\rule[.5ex]{2.5ex}{0.5pt}}

\global\long\def\revisionmath#1{\textcolor{red}{#1}}%

\makeatletter \def\ps@pprintTitle{  \let\@oddhead\@empty  \let\@evenhead\@empty  \def\@oddfoot{\centerline{\thepage}}  \let\@evenfoot\@oddfoot} \makeatother

\let\svthefootnote\thefootnote\let\thefootnote\relax\footnotetext{\\ \hspace*{65pt}DISTRIBUTION STATEMENT A. Approved for public release. Distribution is unlimited.}\addtocounter{footnote}{-1}\let\thefootnote\svthefootnote

\section{Introduction\label{sec:Introduction}}

The discontinuous Galerkin (DG) method~\citep{Ree73,Bas97_2,Bas97,Coc98,Coc00}
has recently gained considerable attention in the computational fluid
dynamics community~\citep{Wan13}. Several desirable properties,
such as local conservation, arbitrarily high order of accuracy on
unstructured grids, and suitability for heterogeneous computing systems,
highlight its great potential to accurately and efficiently simulate
complex fluid flows. However, it is well-known that nonlinear instabilities
are easily introduced in underresolved regions and near flow-field
discontinuities. This issue is exacerbated when realistic thermodynamics
(e.g., the thermally perfect gas model) and multispecies chemical
reactions are incorporated. For example, fully conservative schemes
(not just the DG method) are known to generate spurious pressure oscillations
in moving interface problems~\citep{Abg88,Kar94,Abg96}. To remedy
this issue, quasi-conservative methods are often employed, such as
the double-flux scheme~\citep{Abg01}, in which the equation of state
is recast based on a calorically perfect gas model using frozen, elementwise-constant
auxiliary variables. The double-flux method has been utilized in a
number of studies involving DG simulations of multicomponent flows~\citep{Bil11,Lv15,Ban20}.
While effective at eliminating the aforementioned pressure oscillations,
the double-flux approach violates energy conservation, which can be
crucial for the prediction of shock speeds and locations, as well
as heat release in combustion processes. As a compromise, Lv and Ihme~\citep{Lv15}
proposed a hybrid double-flux strategy wherein a fully conservative
method is employed at shocks.

On the other hand, Johnson and Kercher recently proposed an explicit,
fully conservative, high-order method that does not generate spurious
pressure oscillations in smooth flow regions or across material interfaces
when the temperature is continuous~\citep{Joh20_2}. This is done
via (a) exact evaluation of the thermodynamics and (b) calculation
of the inviscid and viscous fluxes in a consistent manner that maintains
pressure equilibrium, which is drastically simplified through a particular
choice of nodal basis. It is worth noting that their proposed strategy
is not limited to DG schemes, but can be applied to other numerical
methods as well. Stiff chemical reactions were handled via operator
splitting. Efficient and accurate integration of the chemical source
terms was achieved via an $hp$-adaptive DG method for solving ordinary
differential equations, termed \emph{DGODE}. Optimal high-order accuracy
was demonstrated for smooth flows, and a suite of complex multicomponent
reacting flows was computed. The high-order calculation of a three-dimensional
reacting shear flow in the presence of a splitter plate did not require
any additional stabilization. Also computed was a two-dimensional,
moving detonation wave. Artificial viscosity was used to stabilize
the shock fronts present in the solution. However, a very fine mesh
was required to maintain robustness while achieving correct prediction
of the cellular structures behind the shock, highlighting the difficulty
of robustly and accurately simulating multidimensional detonation
waves on coarse meshes, especially for high-order methods. Even if
spurious pressure oscillations are sufficiently minimized, the wide
range of complex flow features characterizing detonations is difficult
to capture~\citep{Dei03}. Such features include thin reaction zones,
traveling pressure waves, shock-shock interactions, Kelvin-Helmholtz
instabilities, vortical structures, and triple points. As previously
discussed, underresolution of flow features induces instabilities
that can cause solver divergence, and these instabilities may be amplified
by the added nonlinearity of the variable thermodynamics, multicomponent
flow, and stiff chemical reactions. Another difficulty associated
with multicomponent flow is the frequent occurrence of negative species
concentrations, especially since initial and boundary conditions often
specify the mole fractions of certain species to be zero. Many reacting-flow
solvers simply ``clip'' negative concentrations to zero, which violates
conservation and introduces low-order errors.

In light of the above, our primary objective in this study is to develop
a positivity-preserving and entropy-bounded DG scheme for simulating
the multicomponent, chemically reacting Euler equations with exact
thermodynamics for mixtures of thermally perfect gases. Specifically,
we build upon the aforementioned fully conservative high-order method
that can maintain pressure equilibrium~\citep{Joh20_2}. In this
first part of our two-part paper, we focus on the one-dimensional
case. The unique challenges posed by realistic thermodynamics and
stiff chemical source terms are discussed and addressed. \emph{Entropy-bounded}
in this context means that the specific thermodynamic entropy of the
discrete solution is bounded from below, an idea rooted in the minimum
entropy principle satisfied by entropy solutions to the compressible,
multicomponent Euler equations. This principle was recently proved
for the nonreacting case by Gouasmi et al.~\citep{Gou20}, which
is an important prerequisite of this work. The developed formulation
in general preserves order of accuracy for smooth solutions. Our main
contributions are as follows:
\begin{itemize}
\item A minimum entropy principle for the compressible, multicomponent,
chemically reacting Euler equations is demonstrated, which follows
naturally from the proof in~\citep{Gou20}. 
\item In a DG framework, we extend the fully conservative high-order discretization
in~\citep{Joh20_2} to be positivity-preserving and entropy-bounded.
We also discuss a different local entropy bound that is less restrictive
than that previously introduced in~\citep{Lv15_2} in the context
of the monocomponent Euler equations.
\item To provably guarantee satisfaction of the minimum entropy principle
in the temporal integration of stiff chemical source terms, we extend
DGODE by developing an entropy-stable DGODE based on diagonal-norm
summation-by-parts (SBP) operators. This involves deriving a new entropy-conservative
two-point numerical state function (note that similar entropy-conservative
numerical state functions were derived for the monocomponent Euler,
shallow-water, and ideal magnetohydrodynamics equations by Friedrich
et al.~\citep{Fri19} in the context of an entropy-stable space-time
DG discretization). %
\item We employ the proposed entropy-bounded DG method to robustly and accurately
compute a series of canonical one-dimensional test cases. Mass, energy,
and atom conservation are maintained. The proposed formulation more
effectively suppresses spurious oscillations than the positivity-preserving
DG scheme. In particular, we find that the relative benefit of enforcing
an entropy bound is significantly greater in the multicomponent, thermally
perfect setting than in monocomponent, calorically perfect setting.
\end{itemize}
In Part II~\citep{Chi22_2}, we extend the entropy-bounded DG scheme
to multiple dimensions on arbitrary elements. Our multidimensional
extension is a further generalization of the multidimensional positivity-preserving/entropy-bounded
schemes currently in the literature~\citep{Zha10,Zha12,Lv15_2,Jia18}.
Specifically, restrictions on the numerical flux, physical modeling,
element shape, polynomial order of the geometric approximation, and/or
quadrature rules are relaxed. Complex detonation waves in two and
three dimensions are computed. We find that whereas the positivity-preserving
DG scheme is often not sufficiently stable (even with artificial viscosity
to stabilize strong discontinuities), enforcing an entropy bound enables
robust calculations on relatively coarse meshes. 

\subsection{Background}

Various stabilization strategies for high-order DG schemes have been
introduced in the literature. Artificial viscosity is a popular approach
that very effectively suppresses oscillations and is perfectly compatible
with high polynomial orders, arbitrary elements, and general equation
sets~\citep{Per06,Bar10,Chi19}. However, there are certain limitations
that discourage an overreliance on artificial viscosity for suppressing
\emph{all} instabilities. First, it can significantly pollute accuracy,
especially in smooth regions of the flow. As such, it should ideally
be added only where necessary (e.g., strong shocks that are otherwise
difficult to robustly capture). Second, design of a shock sensor that
can reliably detect discontinuous flow features for general configurations
remains an open problem. Even in a single flow configuration that
involves discontinuities of varying strengths (as is the case for
detonations), it is difficult to detect all such discontinuities and
add the ``right'' amount of artificial viscosity. Third, there is
typically a very strong dependence on tunable parameters. 

A common alternative to artificial viscosity is limiting. WENO-type~\citep{Luo07,Maz19},
TVD/TVB~\citep{Coc89,Coc89_2}, and moment limiters~\citep{Kri07}
are well-known examples. However, it can be difficult to extend these
limiters to arbitrary polynomial orders for both the solution and
geometric approximations, as well as to general equation sets. Furthermore,
these limiters are not guaranteed to yield physically admissible solutions
(i.e., positive density and pressure), a drawback of artificial viscosity
as well. An alternative approach for maintaining robustness is to
align the grid with discontinuities. Recent formulations that do so
in an implicit manner by treating the grid as a variable were developed
by Corrigan et al.~\citep{Cor18} and Zahr and Persson~\citep{Zah18}.
An encouraging preliminary effort to apply implicit shock tracking
to reacting flow is discussed in~\citep{Zah21}, in which supersonic
inviscid reacting flow over a two-dimensional wedge with simple thermodynamics
and chemistry was computed. 

Recently, positivity-preserving DG schemes~\citep{Zha10,Zha11,Zha12,Zha17}
have seen considerable success in solving the monocomponent, nonreacting
Euler equations with explicit time stepping. These schemes prevent
the occurrence of negative densities and pressures under a constraint
on the time step size, a positivity-preserving numerical flux, and
a limiting procedure consisting of a simple linear ``squeezing''
of the solution towards its cell average. The limiting operator is
conservative and maintains order of accuracy for smooth solutions.
However, the limiter is not very effective at dampening oscillations.
The positivity-preserving DG method was extended to the entropy-bounded
DG scheme by Zhang and Shu~\citep{Zha12_2}, in which an additional
limiting step based on a Newton search was introduced to enforce a
global entropy bound.%
. %
{} Note that it is implicitly assumed that an entropy-bounded scheme
is also positivity-preserving. Under their invariant-region-preserving
DG framework, Jiang and Liu~\citep{Zha12_2,Jia18} introduced a more
straightforward limiter to enforce the entropy principle in an algebraic
manner, which is particularly desirable in the multicomponent, thermally
perfect case due to the cost of evaluating complex thermodynamics.
Lv and Ihme~\citep{Lv15_2} further extended the entropy-bounded
DG scheme (for the monocomponent Euler equations) by relaxing restrictions
on the geometry and quadrature rules. They also introduced a local
entropy bound. Numerical tests demonstrated the superiority of the
entropy-bounded DG scheme for suppressing spurious oscillations, compared
to the positivity-preserving DG method. Lv and Ihme later applied
entropy bounding to their reacting flow DG solver~\citep{Lv17},
which utilizes the double-flux approach discussed above. The frozen
thermodynamics and relaxation towards a calorically perfect gas model
circumvent the difficulties of extending the entropy-bounded DG method
to reacting flow with exact thermodynamics and non-calorically-perfect
gases. Furthermore, the physico-mathematical validity of combining
the double-flux model with enforcement of a discrete minimum entropy
principle is not immediately clear. We also note that these positivity-preserving
and entropy-bounded DG methods are related to the recent geometric
quasilinearization framework by Wu and Shu~\citep{Wu21}, as well
as the invariant-domain-preserving schemes based on graph viscosity
by, for example, Guermond et al.~\citep{Gue19} and Pazner~\citep{Paz21},
which employ a convex limiting procedure relying on an iterative line
search.

Before concluding this section, we note several other efforts to compute
chemically reacting flows with DG schemes in a stable manner. Gutierrez-Jorquera
and Kummer~\citep{Gut22} computed steady-state diffusion flames
using a low-Mach pressure-based solver and a one-step reaction mechanism.
May et al.~\citep{May21} simulated steady hypersonic flows with
some high-enthalpy effects using a hybridized DG solver that employs
artificial viscosity for shock capturing. Papoutsakis et al.~\citep{Pap18}
computed chemically reacting hypersonic flow over a double cone with
a TVB limiter; however, only a linear polynomial approximation of
the solution was employed, and there were discrepancies with finite-volume
and experimental results. A number of positivity-preserving DG schemes
for the reacting Euler equations have also been developed. For example,
Zhang and Shu~\citep{Zha11} and Wang et al.~\citep{Wan12} extended
the positivity-preserving DG scheme to handle source terms with explicit
time stepping. Complex thermodynamics and stiff source terms were
not addressed. Du and Yang~\citep{Du19} and Du et al.~\citep{Du19_2}
presented a positivity-preserving DG method based on a new explicit,
exponential Runge-Kutta (RK)/multistep time integration scheme that
can handle stiff source terms~\citep{Hua18}. However, the temporal
order of accuracy is strongly dependent on the initial conditions.
Other positivity-preserving time integrators compatible with stiff
source terms include implicit Patankar-type RK schemes~\citep{Hua19,Hua19_2,Pan21},
which have been applied to finite difference~\citep{Hua19,Hua19_2}
and finite volume~\citep{Pan21} discretizations of the reacting
Euler equations. These schemes are still undergoing development, and
there may be issues with extending them to DG discretizations~\citep{Hua19}.
An additional obstacle of both exponential RK/multistep and Patankar-type
RK schemes is proper enforcement of the minimum entropy principle.
For these reasons and due to its proven past success, we elect to
still employ operator splitting to deal with stiff source terms. Nevertheless,
exponential RK/multistep and Pantankar-type schemes are indeed worthy
of future consideration.

That we use an entropy-stable DG discretization to temporally integrate
the chemical source terms may cause readers to question why an entropy-stable
DG discretization is not also employed for the transport step. Note
that entropy-stable schemes guarantee the global integral of mathematical
entropy to be nonincreasing in time (assuming periodic or entropy-stable
boundary conditions), whereas entropy-bounded schemes enforce, in
a pointwise fashion, the specific thermodynamic entropy to be greater
than some lower bound. Entropy-stable DG schemes for multicomponent
(nonreacting) flows have emerged only recently~\citep{Gou20_2,Ren21}.
For general hyperbolic conservation laws, there are various ways to
achieve entropy stability; two of the most well-known are as follows:
(a) the entropy variables, instead of the conservative variables,
are directly solved for, and (b) SBP operators are used to approximate
discrete derivatives. The first approach, in which the conservative
variables depend implicitly on the polynomial approximation of the
entropy variables, is not appropriate for explicit time stepping,
which we employ in this work (for the transport terms). Furthermore,
with the typical choice of entropy, the entropy variables are undefined
when any of the partial densities vanishes~\citep{Gou20_2}. Conversely,
the second approach, which has surged in popularity in recent years,
is compatible with explicit time stepping and does not assume exact
integration to provably guarantee entropy stability. Belonging to
this category are the methods by Renac~\citep{Ren21} and Peyvan
et al.~\citep{Pey22}. On unstructured grids, particularly those
with (possibly curved) simplicial elements, current SBP-based entropy-stable
methods can become suboptimal~\citep{Che17}, require direct use
of the entropy variables (even if the conservative variables are the
unknowns)~\citep{Cha19,Cha18_2}, and/or significantly increase in
complexity~\citep{Cha19,Cre18,Cha18_2,Che20}. Note that these issues
are not present in the developed entropy-stable DGODE (except a moderate
increase in complexity), which entails a one-dimensional discretization
in time. Due to the above factors, as well as the relative simplicity
of the entropy-bounded DG scheme and its compatibility with the pressure-equilibrium-maintaining
discretization by Johnson and Kercher~\citep{Joh20_2}, we choose
to employ the proposed entropy-bounded method for the transport step.
Nevertheless, we emphasize that we are not ruling out entropy-stable
schemes; advancements to these formulations are rapid and their potential
is evident~\citep{Gas21}. Finally, it should be noted that although
the construction of entropy-stable and entropy-bounded methods rely
on different techniques, entropy stability and entropy boundedness
are not necessarily mutually exclusive. Depending on the type of entropy-stable
scheme, it should, in principle, be feasible to achieve discrete satisfaction
of both properties.

The remainder of this paper is organized as follows. Sections~\ref{sec:governing_equations}
and~\ref{sec:DG-discretization} summarize the governing equations
and basic DG discretization, respectively. Section~\ref{sec:minimum-entropy-principle}
reviews the minimum entropy principle associated with the compressible,
multicomponent, nonreacting Euler equations and extends it to the
reacting Euler equations. The next section presents the positivity-preserving
and entropy-bounded DG formulation for the transport step. We then
discuss the entropy-stable DG discretization for the reaction step
in Section~\ref{sec:entropy-stable-DGODE}. Results for fundamental
nonreacting test cases and one-dimensional detonation-wave simulations
are given in the following section. We close the paper with concluding
remarks.

\section{Governing equations\label{sec:governing_equations}}

The compressible, multicomponent, chemically reacting Euler equations
are given as
\begin{equation}
\frac{\partial y}{\partial t}+\nabla\cdot\mathcal{F}\left(y\right)-\mathcal{S}\left(y\right)=0\label{eq:conservation-law-strong-form}
\end{equation}
where $t\in\mathbb{R}^{+}$ is time, $y(x,t):\mathbb{R}^{d}\times\mathbb{R}^{+}\rightarrow\mathbb{R}^{m\times d}$
is the conservative state vector (with $x=(x_{1},\ldots,x_{d})$ denoting
the physical coordinates), $\mathcal{F}(y):\mathbb{R}^{m}\rightarrow\mathbb{R}^{m\times d}$
is the convective flux, $\mathcal{S}(y):\mathbb{R}^{m}\rightarrow\mathbb{R}^{m}$
is the chemical source term. The state vector is expanded as

\begin{equation}
y=\left(\rho v_{1},\ldots,\rho v_{d},\rho e_{t},C_{1},\ldots,C_{n_{s}}\right)^{T},\label{eq:reacting-navier-stokes-state}
\end{equation}
where $n_{s}$ is the number of species (which yields $m=d+n_{s}+1$),
$\rho$ is density, $v=\left(v_{1},\ldots,v_{d}\right)$ is the velocity,
$e_{t}$ is the mass-specific total energy, and $C=\left(C_{1},\ldots,C_{n_{s}}\right)$
are the species concentrations. The density is computed from the species
concentrations as

\[
\rho=\sum_{i=1}^{n_{s}}\rho_{i}=\sum_{i=1}^{n_{s}}W_{i}C_{i},
\]

\noindent where $\rho_{i}$ is the partial density and $W_{i}$ is
the molecular weight of the $i$th species. The mass fraction and
mole fraction of the $i$th species are defined as 
\[
Y_{i}=\frac{\rho_{i}}{\rho}
\]
and
\[
X_{i}=\frac{C_{i}}{\sum_{i=1}^{n_{s}}C_{i}},
\]
respectively.

\noindent The $k$th spatial convective flux component is written
as
\begin{equation}
\mathcal{F}_{k}^{c}\left(y\right)=\left(\rho v_{k}v_{1}+P\delta_{k1},\ldots,\rho v_{k}v_{d}+P\delta_{kd},v_{k}\left(\rho e_{t}+P\right),v_{k}C_{1},\ldots,v_{k}C_{n_{s}}\right)^{T},\label{eq:reacting-navier-stokes-spatial-convective-flux-component}
\end{equation}
where $P$ is the pressure. The mass-specific total energy is the
sum of the specific internal and kinetic energies, given by

\[
e_{t}=u+\frac{1}{2}\sum_{k=1}^{d}v_{k}v_{k},
\]
where the (mixture-averaged) mass-specific internal energy, $u$,
is the mass-weighted sum of the mass-specific internal energies of
each species:
\[
u=\sum_{i=1}^{n_{s}}Y_{i}u_{i}.
\]
This work assumes thermally perfect gases, with $u_{i}$ given by~\citep{Gio99}
\[
u_{i}=h_{i}-R_{i}T=h_{\mathrm{ref},i}+\int_{T_{\mathrm{ref}}}^{T}c_{p,i}(\tau)d\tau-R_{i}T,
\]
where $h_{i}$ is the mass-specific enthalpy of the $i$th species,
$R_{i}=R^{0}/W_{i}$ (with $R^{0}=8314.4621\,\mathrm{JKmol}^{-1}\mathrm{K}^{-1}$
denoting the universal gas constant), $T$ is the temperature, $T_{\mathrm{ref}}$
is the reference temperature (298.15 K), $h_{\mathrm{ref},i}$ is
the reference-state species formation enthalpy, and $c_{p,i}$ is
the mass-specific heat at constant pressure of the $i$th species.
$c_{p,i}$ is computed from an $n_{p}$-order polynomial as
\begin{equation}
c_{p,i}=\sum_{k=0}^{n_{p}}a_{ik}T^{k},\label{eq:specific_heat_polynomial}
\end{equation}
based on the NASA coefficients~\citep{Mcb93,Mcb02}. The mass-specific
thermodynamic entropy of the mixture is defined as
\[
s=\sum_{i=1}^{n_{s}}Y_{i}s_{i},
\]
with $s_{i}$ given by
\[
s_{i}=s_{i}^{o}-R_{i}\log\frac{P_{i}}{P_{\mathrm{ref}}},\quad s_{i}^{o}=s_{\mathrm{ref},i}^{o}+\int_{T_{\mathrm{ref}}}^{T}\frac{c_{p,i}(\tau)}{\tau}d\tau,
\]
where $s_{\mathrm{ref},i}^{o}$ is the species formation entropy at
the reference temperature and reference pressure ($P_{\mathrm{ref}}=1\text{ atm}$),
$s_{i}^{o}$ denotes the species entropy at atmospheric pressure,
and $P_{i}=C_{i}R^{0}T$ is the partial pressure. $s_{i}$ can also
be expressed as~\citep{Gio99,Gou20,Gou20_2}
\[
s_{i}=s_{\mathrm{ref},i}^{o}+\int_{T_{\mathrm{ref}}}^{T}\frac{c_{v,i}(\tau)}{\tau}d\tau-R_{i}\log\frac{C_{i}}{C_{\mathrm{ref}}},
\]
where $C_{\mathrm{ref}}=P_{\mathrm{ref}}/R^{0}T_{\mathrm{ref}}$ is
the reference concentration and $c_{v,i}=c_{p,i}-R_{i}$ is the mass-specific
heat at constant volume of the $i$th species. Summing up the partial
pressures yields the equation of state for the mixture: 
\begin{equation}
P=R^{0}T\sum_{i=1}^{n_{s}}C_{i}.\label{eq:EOS}
\end{equation}
$u_{i}$, $h_{i}$, and $s_{i}^{o}$ are computed by integrating Equation~(\ref{eq:specific_heat_polynomial})
and incorporating the integration constants in~\citep{Mcb93} and~\citep{Mcb02}.
For example, $u_{i}$ is calculated as
\begin{equation}
u_{i}=b_{i0}+\sum_{k=0}^{n_{p}}\frac{a_{ik}}{k+1}T^{k+1}-R_{i}T=\sum_{k=0}^{n_{p}+1}b_{ik}T^{k},\label{eq:internal-energy-polynomial}
\end{equation}
where $b_{i0}$ is the integration constant and
\[
b_{ik}=\begin{cases}
\frac{a_{i,k-1}}{k}, & k>1\\
a_{i0}-R_{i}, & k=1.
\end{cases}
\]

\subsection{Chemical reaction rates\label{subsec:chemical-reaction-rates}}

The source term in Equation~(\ref{eq:conservation-law-strong-form})
is a smooth function of the state variables, written as~\citep{Kee96}

\begin{equation}
\mathcal{S}\left(y\right)=\left(0,\ldots,0,0,\omega_{1},\ldots,\omega_{n_{s}}\right)^{T},\label{eq:reacting-navier-stokes-source-term}
\end{equation}
where $\omega_{i}$ is the production rate of the $i$th species,
which satisfies mass conservation:
\begin{equation}
\sum_{i=1}^{n_{s}}W_{i}\omega_{i}=0.\label{eq:chemical-reaction-mass-conservation}
\end{equation}
The production rate is computed as
\[
\omega_{i}=\sum_{j=1}^{n_{r}}\nu_{ij}q_{j}.
\]
$n_{r}$ is the number of reactions, $\nu_{ij}=\nu_{ij}^{r}-\nu_{ij}^{f}$
is the difference between the reverse ($\nu_{ij}^{r}$) and the forward
stoichiometric coefficients ($\nu_{ij}^{f}$), and $q_{j}$ is the
rate of progress of the $j$th reaction, computed as
\begin{equation}
q_{j}=k_{j}^{f}\prod_{i=1}^{n_{s}}C_{i}^{\nu_{ij}^{f}}-k_{j}^{r}\prod_{i=1}^{n_{s}}C_{i}^{\nu_{ij}^{r}},\label{eq:chemical-reaction-rate-of-progress}
\end{equation}
where $k_{j}^{f}$ and $k_{j}^{r}$ are the forward and reverse rate
constants, respectively, of the $j$th reaction. The forward and reverse
rate constants are related via the equilibrium constant,
\begin{equation}
K_{j}^{e}=\exp\left(-\frac{\Delta G_{j}'}{R^{0}T}\right)\left(\frac{P_{\mathrm{ref}}}{R^{0}T}\right)^{\sum_{i}\nu_{ij}},\label{eq:equilibrium-constant-1}
\end{equation}
where $\Delta G_{j}'$ is the change in reference-state Gibbs free
energy for the $j$th reaction, given as
\[
\Delta G_{j}'=\sum_{i=1}^{n_{s}}\nu_{ij}W_{i}h_{i}-T\sum_{i=1}^{n_{s}}\nu_{ij}W_{i}s_{i}'.
\]
Introducing the reduced chemical potentials of the $i$th species,
\begin{align*}
\mu_{i} & =\frac{g_{i}}{R^{0}T},\\
\mu_{i}^{\mathrm{u}} & =\mu_{i}-\frac{1}{W_{i}}\log C_{i},
\end{align*}
where $g_{i}=h_{i}-Ts_{i}$ is the Gibbs function of the $i$th species,
the equilibrium constant can also be written as~\citep[Chapter 6.4]{Gio99}
\begin{equation}
K_{j}^{e}=\exp\left(-\sum_{i=1}^{n_{s}}\nu_{ij}W_{i}\mu_{i}^{\mathrm{u}}\right),\label{eq:equilibrium-constant-2}
\end{equation}
There exist various models for approximating the forward rate constants
in Equation~(\ref{eq:chemical-reaction-rate-of-progress}), several
of which will be briefly discussed next.

\subsubsection{Arrhenius reactions}

The Arrhenius form is the most common model for approximating reaction
rates. The forward rate constants are computed as
\[
k_{j}^{f}=A_{j}T^{b_{j}}\exp\left(-\frac{E_{j}}{R^{0}T}\right),
\]
 where $A_{j}>0$ and $b_{j}$ are parameters and $E_{j}\geq0$ is
the activation energy~\citep{Gio99,Kee96}.

\subsubsection{Three-body reactions}

These reactions require a ``third body'' in order to proceed. Dissociation
and recombination reactions are often of this type. The rate of progress
is scaled by a prefactor as~\citep{Kee96}
\[
q_{j}=\left(\sum_{i=1}^{n_{s}}\alpha_{ij}C_{i}\right)\left(k_{j}^{f}\prod_{i=1}^{n_{s}}C_{i}^{\nu_{ij}^{f}}-k_{j}^{r}\prod_{i=1}^{n_{s}}C_{i}^{\nu_{ij}^{r}}\right),
\]
where $\alpha_{ij}$ are the third-body efficiencies.

\subsubsection{Unimolecular/recombination fall-off reactions\label{subsec:fall-off-reactions}}

Unimolecular/recombination fall-off reactions incorporate a dependence
on pressure. In general, this model predicts an increase in the reaction
rate with increasing pressure. For brevity, we drop the $j$ subscript
and $f$ superscript. Given Arrhenius-type low-pressure and high-pressure
limits for the rate constant ($k_{0}$ and $k_{\infty}$, respectively),
$k$ is computed as
\begin{equation}
k=k_{\infty}\left(\frac{P_{r}}{1+P_{r}}\right)F,\label{eq:falloff-reaction}
\end{equation}
where $P_{r}$ is the reduced pressure, defined as
\[
P_{r}=\frac{k_{0}}{k_{\infty}}\sum_{i=1}^{n_{s}}\alpha_{i}C_{i}.
\]
There are different ways to compute $F$ in Equation~(\ref{eq:falloff-reaction}).
With the Lindemann~\citep{Lin22} approach, $F$ is simply unity.
In the Troe~\citep{Gil83} form, $F$ is given by
\[
\log F=\frac{\log F_{\mathrm{cent}}}{1+\left[\frac{\log P_{r}+c_{1}}{c_{2}-c_{3}\left(\log P_{r}+c_{1}\right)}\right]^{2}},
\]
where the definitions of $c_{1}$, $c_{2}$, $c_{3}$, and $F_{\mathrm{cent}}$
are given in~\citep{Gil83}.

\subsubsection{Chemically activated bimolecular reactions}

Reactions of this type are also pressure-dependent, but the reaction
rates typically decrease with increasing pressure. The rate constants
are computed as~\citep{Kee96}
\[
k=k_{0}\left(\frac{1}{1+P_{r}}\right)F,
\]
where $k_{0}$, $P_{r}$, and $F$ are calculated as in Section~\ref{subsec:fall-off-reactions}.

\section{Discontinuous Galerkin discretization\label{sec:DG-discretization}}

In this section, we briefly describe the DG discretization of the
governing equations and review the techniques proposed in~\citep{Joh20_2}
to prevent spurious pressure oscillations in smooth regions of the
flow.

Let $\Omega\subset\mathbb{R}^{d}$ be the $d$-dimensional computational
domain partitioned by $\mathcal{T}$, which consists of non-overlapping
cells $\kappa$ with boundaries $\partial\kappa$. Let $\mathcal{E}$
denote the set of interfaces $\epsilon$, with $\cup_{\epsilon\in\mathcal{E}}\epsilon=\cup_{\kappa\in\mathcal{T}}\partial\kappa$.
$\mathcal{E}$ consists of the interior interfaces,
\[
\epsilon_{\mathcal{I}}\in\mathcal{E_{I}}=\left\{ \epsilon_{\mathcal{I}}\in\mathcal{E}\middlebar\epsilon_{\mathcal{I}}\cap\partial\Omega=\emptyset\right\} ,
\]
and boundary interfaces, 
\[
\epsilon_{\partial}\in\mathcal{E}_{\partial}=\left\{ \epsilon_{\partial}\in\mathcal{E}\middlebar\epsilon_{\partial}\subset\partial\Omega\right\} ,
\]
such that $\mathcal{E}=\mathcal{E_{I}}\cup\mathcal{E}_{\partial}$.
At interior interfaces, there exists $\kappa^{+},\kappa^{-}\in\mathcal{T}$
such that $\epsilon_{\mathcal{I}}=\partial\kappa^{+}\cap\partial\kappa^{-}$.
$n^{+}$ and $n^{-}$ denote the outward facing normal of $\kappa^{+}$
and $\kappa^{-}$, respectively, with $n^{+}=-n^{-}$. The discrete
(finite-dimensional) subspace $V_{h}^{p}$ over $\mathcal{T}$ is
defined as
\begin{eqnarray}
V_{h}^{p} & = & \left\{ \mathfrak{v}\in\left[L^{2}\left(\Omega\right)\right]^{m}\middlebar\forall\kappa\in\mathcal{T},\left.\mathfrak{v}\right|_{\kappa}\in\left[\mathcal{P}_{p}(\kappa)\right]^{m}\right\} ,\label{eq:discrete-subspace}
\end{eqnarray}
where, for $d=1$, $\mathcal{P}_{p}(\kappa)$ is the space of polynomial
functions of degree no greater than $p$ in $\kappa$. %
{} For $d>1$, the choice of polynomial space typically depends on the
element type~\citep{Har13}.

The semi-discrete form of the governing equations (Equation~(\ref{eq:conservation-law-strong-form}))
is given as: find $y\in V_{h}^{p}$ such that
\begin{gather}
\sum_{\kappa\in\mathcal{T}}\left(\frac{\partial y}{\partial t},\mathfrak{v}\right)_{\kappa}-\sum_{\kappa\in\mathcal{T}}\left(\mathcal{F}\left(y\right),\nabla\mathfrak{v}\right)_{\kappa}+\sum_{\epsilon\in\mathcal{E}}\left(\mathcal{F}^{\dagger}\left(y^{+},y^{-},n\right),\left\llbracket \mathfrak{v}\right\rrbracket \right)_{\mathcal{E}}-\sum_{\kappa\in\mathcal{T}}\left(\mathcal{S}\left(y\right),\mathfrak{v}\right)_{\kappa}=0\qquad\forall\:\mathfrak{v}\in V_{h}^{p},\label{eq:semi-discrete-form}
\end{gather}
where $\left(\cdot,\cdot\right)$ denotes the inner product, $\mathcal{F}^{\dagger}\left(y^{+},y^{-},n\right)$
is the numerical flux, and $\left\llbracket \cdot\right\rrbracket $
is the jump operator, given by $\left\llbracket \mathfrak{v}\right\rrbracket =\mathfrak{v}^{+}-\mathfrak{v}^{-}$
at interior interfaces and $\left\llbracket \mathfrak{v}\right\rrbracket =\mathfrak{v}^{+}$
at boundary interfaces. Applying a standard, fully explicit time stepping
scheme to Equation~(\ref{eq:semi-discrete-form}) would yield an
exceedingly small time step due to the stiff chemical source terms.
As such, operator splitting is employed to decouple the temporal integration
of the convection operator from that of the source term. Specifically,
we apply Strang splitting~\citep{Str68} over a given interval $(t_{0},t_{0}+\Delta t]$
as
\begin{align}
\frac{\partial y}{\partial t}+\nabla\cdot\mathcal{F}\left(y\right)=0 & \textup{ in }\Omega\times\left(t_{0},t_{0}+\nicefrac{\Delta t}{2}\right],\label{eq:strang-splitting-1}\\
\frac{\partial y}{\partial t}-\mathcal{S}\left(y\right)=0 & \textup{ in }\left(t_{0},t_{0}+\Delta t\right],\label{eq:strang-splitting-2}\\
\frac{\partial y}{\partial t}+\nabla\cdot\mathcal{F}\left(y\right)=0 & \textup{ in }\Omega\times\left(t_{0}+\nicefrac{\Delta t}{2},t_{0}+\Delta t\right],\label{eq:strang-splitting-3}
\end{align}
where Equations~(\ref{eq:strang-splitting-1}) and~(\ref{eq:strang-splitting-3})
are integrated in time with an explicit RK-type scheme, while Equation~(\ref{eq:strang-splitting-2})
is solved using a fully implicit, temporal DG discretization for ODEs
(DGODE). Details on DGODE and its extension to entropy-stable DGODE
are given in Section~\ref{sec:entropy-stable-DGODE}. More sophisticated
operator-splitting schemes can be employed as well~\citep{Wu19}. 

Unless otherwise specified, the volume and surface terms in Equation~(\ref{eq:semi-discrete-form})
are evaluated using a quadrature-free approach~\citep{Atk96,Atk98}.
Throughout this work, we employ a nodal basis, such that the element-local
polynomial approximation of the solution is expanded as
\begin{equation}
y_{\kappa}=\sum_{j=1}^{n_{b}}y_{\kappa}(x_{j})\phi_{j},\label{eq:solution-approximation}
\end{equation}
where $n_{b}$ is the number of basis functions, $\left\{ \phi_{1},\ldots,\phi_{n_{b}}\right\} $
are the basis functions, and $\left\{ x_{1},\ldots,x_{n_{b}}\right\} $
are the node coordinates. Its average over $\kappa$ is given by
\begin{equation}
\overline{y}_{\kappa}=\frac{1}{\left|\kappa\right|}\int_{\kappa}ydx,\label{eq:solution-element-average}
\end{equation}
where $\left|\kappa\right|$ is the volume of $\kappa$. In the evaluation
of the second and third integrals in Equation~(\ref{eq:semi-discrete-form}),
the nonlinear convective flux can be approximated as
\begin{equation}
\mathcal{F_{\kappa}}\approx\sum_{k=1}^{n_{c}}\mathcal{F}\left(y_{\kappa}\left(x_{k}\right)\right)\varphi_{k},\label{eq:flux-projection}
\end{equation}
where $n_{c}\geq n_{b}$ and $\left\{ \varphi_{1},\ldots,\varphi_{n_{c}}\right\} $
is a set of (potentially different) polynomial basis functions. If
$n_{c}=n_{b}$ and the integration points are included in the set
of solution nodes (e.g., Gauss-Lobatto points for tensor-product elements),
then pressure equilibrium is trivially maintained~\citep{Joh20_2}.
However, over-integration (i.e., $n_{c}>n_{b}$) is often necessary
to minimize aliasing errors and improve stability. Unfortunately,
standard over-integration, as defined in Equation~(\ref{eq:flux-projection}),
causes a loss of pressure equilibrium and generation of spurious pressure
oscillations at material interfaces~\citep{Joh20_2}. Instead, Johnson
and Kercher~\citep{Joh20_2} proposed the following approximation
of the convective flux:
\begin{equation}
\mathcal{F_{\kappa}}\approx\sum_{k=1}^{n_{c}}\mathcal{F}\left(\widetilde{y}_{\kappa}\left(x_{k}\right)\right)\varphi_{k},\label{eq:modified-flux-projection}
\end{equation}
where $\widetilde{y}:\mathbb{R}^{m}\times\mathbb{R}\rightarrow\mathbb{R}^{m}$
is a modified state defined as
\begin{equation}
\widetilde{y}\left(y,\widetilde{P}\right)=\left(\rho v_{1},\ldots,\rho v_{d},\widetilde{\rho u}\left(C_{1},\ldots,C_{n_{s}},\widetilde{P}\right)+\frac{1}{2}\sum_{k=1}^{d}\rho v_{k}v_{k},C_{1},\ldots,C_{n_{s}}\right)^{T}.\label{eq:interpolated-state-modified}
\end{equation}
$\widetilde{P}$ is a polynomial approximation of the pressure that
interpolates onto the span of $\left\{ \phi_{1},\ldots,\phi_{n_{b}}\right\} $
as
\[
\widetilde{P}_{\kappa}=\sum_{j=1}^{n_{b}}P\left(y_{\kappa}\left(x_{j}\right)\right)\phi_{j},
\]
and the modified internal energy, $\widetilde{\rho u}$, is evaluated
from the modified pressure and unmodified species concentrations.
Interpolating the convective flux (including the numerical flux function)
in this manner achieves pressure equilibrium both internally and between
adjacent elements. Of course, with finite resolution, slight deviations
from pressure equilibrium are inevitable; nevertheless, apart from
severely underresolved computations, these deviations remain small
and do not generate large-scale pressure oscillations that cause the
solver to crash, which is not the case if standard flux interpolation~(\ref{eq:flux-projection})
is employed. Additional information on the basic DG discretization,
enforcement of boundary conditions, and nonlinear flux interpolation,
as well as a detailed discussion of the conditions under which pressure
oscillations are generated, can be found in~\citep{Joh20_2}.

\section{Minimum entropy principle\label{sec:minimum-entropy-principle}}

It is well-known that in the presence of discontinuities, weak solutions
to general systems of hyperbolic conservation laws, including the
multicomponent Euler equations, are not unique~\citep{Har83_3}.
As such, physical solutions are typically identified as those that
satisfy entropy conditions, written as (in the absence of source terms)
\begin{equation}
\frac{\partial U}{\partial t}+\nabla\cdot\mathcal{F}^{s}\leq0,\label{eq:entropy-condition}
\end{equation}
where $U(y):\mathbb{R}^{m}\rightarrow\mathbb{R}$ is a given convex
(mathematical) entropy function and $\mathcal{F}^{s}(y):\mathbb{R}^{m}\rightarrow\mathbb{R}^{d}$
is the corresponding spatial entropy flux satisfying
\[
\mathsf{v}^{T}\frac{\partial\mathcal{F}}{\partial y}=\frac{\partial\mathcal{F}^{s}}{\partial y},
\]
with $\mathsf{v}$, the entropy variables, defined as
\[
\mathsf{v}=\left(\frac{\partial U}{\partial y}\right)^{T}.
\]
The mapping from the conservative variables to the entropy variables
is one-to-one and symmetrizes the system~\citep{Moc80}. \emph{Entropy
solutions} are weak solutions that satisfy~(\ref{eq:entropy-condition})
for all entropy/entropy-flux pairs. We also introduce the entropy
potential and the corresponding entropy flux potential:
\begin{equation}
\left(\mathcal{U},\mathcal{F}^{p}\right)=\left(\mathsf{v}^{T}y-U,\mathsf{v}^{T}\mathcal{F}-\mathcal{F}^{s}\right),
\end{equation}
which will be used in Section~\ref{sec:entropy-stable-DGODE}. For
the multicomponent Euler equations, $U=-\rho s$ and $\mathcal{F}^{s}=-\rho sv$
form a common admissible entropy/entropy-flux pair~\citep{Gou20,Gio99},
assuming $C_{i}>0$ and $T>0$. Note the distinction between the mathematical
entropy and the thermodynamic entropy (per unit volume); they are
typically of opposite sign. The conservation equation (for smooth
solutions) for $U=-\rho s$ can be obtained by first combining the
Gibbs relation,
\[
Tds=du-\sum_{i=1}^{n_{s}}g_{i}dY_{i}-\frac{P}{\rho^{2}}d\rho,
\]
with the transport equations for species mass fractions, density,
and internal energy~\citep{Gou20,Gio99},
\[
\begin{aligned}\frac{DY_{i}}{Dt} & =0,\;i=1,\ldots,n_{s},\\
\frac{D\rho}{Dt} & +\rho\nabla\cdot v=0,\\
\frac{Du}{Dt} & +\frac{P}{\rho}\nabla\cdot v=0,
\end{aligned}
\]
to yield the specific entropy transport equation,
\begin{equation}
\frac{Ds}{Dt}=0.\label{eq:entropy-transport-equation}
\end{equation}
Then, using conservation of mass,
\[
\frac{\partial\rho}{\partial t}+\nabla\cdot\left(\rho v\right)=0,
\]
the conservation equation for $U=-\rho s$ is obtained:
\[
\frac{\partial\rho s}{\partial t}+\nabla\cdot\left(\rho sv\right)=0.
\]

\subsection{Review: Minimum entropy principle in the compressible, multicomponent,
nonreacting Euler equations\label{subsec:minimum-entropy-principle-nonreacting}}

Gouasmi et al.~\citep{Gou20} recently proved a minimum entropy principle
satisfied by entropy solutions to the multicomponent, nonreacting
Euler equations, which means that the spatial minimum of the specific
thermodynamic entropy is a nondecreasing function of time. In this
subsection, we summarize the main steps of the proof, which itself
builds on the proof by Tadmor~\citep{Tad86} of a minimum entropy
principle in the monocomponent Euler equations. 

Integrating the inequality~(\ref{eq:entropy-condition}) over $\Omega$
and assuming nonnegative net spatial entropy outflux across $\partial\Omega$
yields
\[
\frac{d}{dt}\int_{\Omega}U\left(y(x,t)\right)dx\leq0.
\]
Integrating in time then gives
\[
\int_{\Omega}U\left(y(x,t)\right)dx\leq\int_{\Omega}U\left(y(x,0)\right)dx.
\]
Tadmor~\citep{Tad84}, however, showed that by integrating~(\ref{eq:entropy-condition})
over the truncated cone $\mathsf{C}=\left\{ \left|x\right|\leq R+v_{\max}(t-\tau)|0\leq\tau\leq t\right\} $,
a more local inequality can be written:
\begin{equation}
\int_{\left|x\right|\leq R}U\left(y(x,t)\right)dx\leq\int_{\left|x\right|\leq R+v_{\mathrm{max}}t}U\left(y(x,0)\right)dx,\label{eq:entropy-inequality-integral-local}
\end{equation}
where $v_{\max}$ is the maximum speed in the domain at $t=0$. If
we consider entropy/entropy-flux pairs of the form 
\begin{equation}
\left(U,\mathcal{F}^{s}\right)=\left(-\rho f(s),-\rho vf(s)\right),
\end{equation}
where $f$ is a smooth function of $s$, (\ref{eq:entropy-inequality-integral-local})
then becomes
\begin{equation}
\int_{\left|x\right|\leq R}\rho(x,t)\cdot f\left(s\left(y(x,t)\right)\right)dx\geq\int_{\left|x\right|\leq R+v_{\mathrm{max}}t}\rho(x,0)\cdot f\left(s\left(y(x,0)\right)\right)dx.\label{eq:entropy-inequality-inject-f}
\end{equation}
Consider the following choice for $f(s)$:
\[
f_{0}(s)=\min\left\{ s-s_{0},0\right\} ,
\]
where $s_{0}$ is the essential infimum of the specific thermodynamic
entropy in the subdomain $\Omega_{R}=\left\{ \left|x\right|\leq R+v_{\max}t\right\} $,
\[
s_{0}=\underset{\left|x\right|\leq R+v_{\max}t}{\text{Ess inf}}s(x,0).
\]
Although $f_{0}(s)$ is not a smooth function of $s$, it can be written
as the limit of a sequence of smooth functions, $f_{0}(s)=\underset{\epsilon\rightarrow0}{\lim}f_{\epsilon}(s)$,
where $f_{\epsilon}(s)$ is defined as~\citep{Gou20}
\[
f_{\epsilon}(s)=\int_{-\infty}^{\infty}f_{0}(s-\mathfrak{s})g_{\epsilon}(\mathfrak{s})d\mathfrak{s},
\]
with
\[
g_{\epsilon}(\mathfrak{s})=\frac{1}{\epsilon}\frac{\exp\left(-\frac{\mathfrak{s}^{2}}{\epsilon^{2}}\right)}{\sqrt{\pi}},\;\epsilon>0.
\]
The first and second derivatives of $f_{\epsilon}(s)$ satisfy the
following conditions:
\[
\frac{df_{\epsilon}}{ds}>0,\frac{d^{2}f_{\epsilon}}{ds^{2}}<0.
\]
Gouasmi et al.~\citep{Gou20} proved the key result that the entropy/entropy-flux
pairs $\left(U,\mathcal{F}^{s}\right)=\left(-\rho f_{\epsilon}(s),-\rho vf_{\epsilon}(s)\right)$,
with $\epsilon>0$, are admissible. In particular, they showed that
a conservation equation (for smooth solutions) for said pairs can
be obtained and that the entropy functions are convex with respect
to the conservative variables. 

With $U=-\rho f_{0}(s)$, the inequality~(\ref{eq:entropy-inequality-inject-f})
can then be written as
\[
\int_{\left|x\right|\leq R}\rho(x,t)\cdot\min\left\{ s(x,t)-s_{0},0\right\} dx\geq\int_{\left|x\right|\leq R+v_{\mathrm{max}}t}\rho(x,0)\cdot\min\left\{ s(x,0)-s_{0},0\right\} dx,
\]
where the RHS is zero by definition of $s_{0}$ and the LHS is nonpositive,
yielding, for $\left|x\right|\leq R$,
\begin{equation}
\min\left\{ s(x,t)-s_{0},0\right\} =0.
\end{equation}
As a direct result, we obtain, for $\left|x\right|\leq R$, 
\begin{equation}
s(x,t)\geq s_{0}=\underset{\left|x\right|\leq R+v_{\max}t}{\text{Ess inf}}s(x,0),\label{eq:minimum-entropy-principle}
\end{equation}
which is the minimum entropy principle for the compressible, multicomponent
Euler equations. Note that only the entropy inequalities associated
with $U=-\rho f_{\epsilon}(s)$ need to be satisfied for a minimum
entropy principle to hold.

\subsection{Minimum entropy principle in the compressible, multicomponent, reacting
Euler equations\label{subsec:minimum-entropy-principle-reacting}}

We now extend the result in the previous subsection to the reacting
Euler equations, where the only difference is the inclusion of the
chemical source terms (Equation~(\ref{eq:reacting-navier-stokes-source-term})).
The presence of the chemical source terms, which are smooth functions
of only the state variables, modifies the entropy inequality~(\ref{eq:entropy-condition})
satisfied by entropy solutions as~\citep{Cha10,Bou04,Rug89}
\begin{equation}
\frac{\partial U}{\partial t}+\nabla\cdot\mathcal{F}^{s}\leq\mathsf{v}^{T}\mathcal{S},\label{eq:entropy-condition-with-source}
\end{equation}
where the RHS represents the mathematical entropy production rate
due to the source term. If $\mathsf{v}^{T}\mathcal{S}\leq0$ (i.e.,
the entropy production is nonpositive), the entropy inequality in
(\ref{eq:entropy-condition}) for the homogeneous system is recovered.
The local inequality in (\ref{eq:entropy-inequality-integral-local})
can then be obtained by again integrating over $\mathsf{C}=\left\{ \left|x\right|\leq R+v_{\max}(t-\tau)|0\leq\tau\leq t\right\} $.
If, in particular, $\mathsf{v}^{T}\mathcal{S}\leq0$ for entropy functions
of the form $U=-\rho f_{\epsilon}(s),\:\forall\epsilon>0$, the remaining
arguments in Section~\ref{subsec:minimum-entropy-principle-nonreacting}
can be used to establish the same minimum entropy principle in Equation~(\ref{eq:minimum-entropy-principle})
for the reacting Euler equations. As such, we focus on showing $\mathsf{v}^{T}\mathcal{S}\leq0$
with $U=-\rho f_{\epsilon}(s),\:\forall\epsilon>0$. 

Consider again entropy/entropy-flux pairs of the form $\left(U,\mathcal{F}^{s}\right)=\left(-\rho f(s),-\rho vf(s)\right)$.
The corresponding entropy variables are given as
\[
\mathsf{v}=\begin{pmatrix}\frac{df}{ds}\frac{v_{1}}{T}\\
\vdots\\
\frac{df}{ds}\frac{v_{d}}{T}\\
-\frac{df}{ds}\frac{1}{T}\\
W_{1}\frac{df}{ds}\left(\frac{g_{1}-\frac{1}{2}\sum_{k=1}^{d}v_{k}v_{k}}{T}+s\right)-W_{1}f\\
\vdots\\
W_{n_{s}}\frac{df}{ds}\left(\frac{g_{n_{s}}-\frac{1}{2}\sum_{k=1}^{d}v_{k}v_{k}}{T}+s\right)-W_{n_{s}}f
\end{pmatrix},
\]
which differ slightly from the entropy variables derived by Gouasmi
et al.~\citep{Gou20} since they used partial densities instead of
species concentrations in the vector of state variables. The entropy
production rate due to chemical reactions, $\mathsf{v}^{T}\mathcal{S}$,
is then written as
\begin{align*}
\mathsf{v}^{T}\mathcal{S} & =\sum_{i=1}^{n_{s}}\left[W_{i}\frac{df}{ds}\left(\frac{g_{i}-\frac{1}{2}\sum_{k=1}^{d}v_{k}v_{k}}{T}+s\right)-W_{i}f\right]\omega_{i}\\
 & =\frac{df}{ds}\sum_{i=1}^{n_{s}}W_{i}\omega_{i}\frac{g_{i}}{T}+\left(-\frac{1}{2}\frac{\sum_{k=1}^{d}v_{k}v_{k}}{T}\frac{df}{ds}+s\frac{df}{ds}-f\right)\sum_{i=1}^{n_{s}}W_{i}\omega_{i}\\
 & =\frac{df}{ds}\sum_{i=1}^{n_{s}}W_{i}\omega_{i}\frac{g_{i}}{T},
\end{align*}
where the last equality is due to mass conservation, as given by Equation~(\ref{eq:chemical-reaction-mass-conservation}).
Since $\frac{df_{\epsilon}}{ds}>0,\:\forall\epsilon>0$, a minimum
entropy principle holds under the condition
\begin{equation}
\sum_{i=1}^{n_{s}}W_{i}\omega_{i}\frac{g_{i}}{T}\leq0.\label{eq:entropy-production-due-to-chemical-reactions-inequality}
\end{equation}
The term on the LHS, $\sum_{i=1}^{n_{s}}W_{i}\omega_{i}g_{i}/T$,
is precisely the entropy production rate for $U=-\rho s$, which Giovangigli~\citep[Chapter 6.4]{Gio99}
already showed to be nonpositive. For completeness, we review the
proof here.

Let $\mu$ and $\mu^{\mathrm{u}}$ denote the following vectors of
the reduced chemical potentials:
\begin{align*}
\mu & =\left(\mu_{1},\ldots,\mu_{n_{s}}\right)^{T},\\
\mu^{\mathrm{u}} & =\left(\mu_{1}^{\mathrm{u}},\ldots,\mu_{n_{s}}^{\mathrm{u}}\right)^{T}.
\end{align*}
We also define $\mathcal{W}$ as the matrix with the molecular weights
along the diagonal: 
\[
\mathcal{W}=\mathrm{diag}\left(W_{1},\ldots,W_{n_{s}}\right).
\]
The equilibrium constant can then be rewritten as
\begin{equation}
\log K_{j}^{e}=-\left(\mathcal{W}\nu_{j}\right)^{T}\mu^{\mathrm{u}}=\left(\mathcal{W}\nu_{j}^{f}\right)^{T}\mu^{\mathrm{u}}-\left(\mathcal{W}\nu_{j}^{r}\right)^{T}\mu^{\mathrm{u}},\label{eq:equilibrium-constant-3}
\end{equation}
where
\begin{align*}
\nu_{j}^{f} & =\left(\nu_{1j}^{f},\ldots,\nu_{n_{s}j}^{f}\right)^{T},\\
\nu_{j}^{r} & =\left(\nu_{1j}^{r},\ldots,\nu_{n_{s}j}^{r}\right)^{T},\\
\nu_{j} & =\nu_{j}^{r}-\nu_{j}^{f}.
\end{align*}
Using Equation~(\ref{eq:equilibrium-constant-3}), Giovangigli~\citep[Chapter 6.4]{Gio99}
introduced a new reaction constant, given by
\[
\log K_{j}^{s}=\log K_{j}^{f}-\left(\mathcal{W}\nu_{j}^{f}\right)^{T}\mu^{\mathrm{u}}=\log K_{j}^{r}-\left(\mathcal{W}\nu_{j}^{r}\right)^{T}\mu^{\mathrm{u}}.
\]
The rate of progress of the $j$th reaction can then be expressed
as
\begin{align*}
q_{j} & =K_{j}^{s}\left[\exp\left(\left(\mathcal{W}\nu_{j}^{f}\right)^{T}\mu^{\mathrm{u}}\right)\prod_{i=1}^{n_{s}}C_{i}^{\nu_{ij}^{f}}-\exp\left(\left(\mathcal{W}\nu_{j}^{r}\right)^{T}\mu^{\mathrm{u}}\right)\prod_{i=1}^{n_{s}}C_{i}^{\nu_{ij}^{r}}\right],\\
 & =K_{j}^{s}\left[\exp\left(\left(\mathcal{W}\nu_{j}^{f}\right)^{T}\mu\right)-\exp\left(\left(\mathcal{W}\nu_{j}^{r}\right)^{T}\mu\right)\right].
\end{align*}
We rewrite the entropy production rate for $U=-\rho s$ as
\begin{align*}
\sum_{i=1}^{n_{s}}W_{i}\omega_{i}\frac{g_{i}}{T} & =R^{0}\sum_{i=1}^{n_{s}}\sum_{j=1}^{n_{r}}W_{i}\mu_{i}\nu_{ij}q_{j}\\
 & =-R^{0}\sum_{j=1}^{n_{r}}\mu^{T}\mathcal{W}\left(\nu_{j}^{f}-\nu_{j}^{r}\right)q_{j}\\
 & =-R^{0}\sum_{j=1}^{n_{r}}K_{j}^{s}\left(\mu^{T}\mathcal{W}\nu_{j}^{f}-\mu^{T}\mathcal{W}\nu_{j}^{r}\right)\left[\exp\left(\mu^{T}\mathcal{W}\nu_{j}^{f}\right)-\exp\left(\mu^{T}\mathcal{W}\nu_{j}^{r}\right)\right],\\
 & \leq0.
\end{align*}
Note that this is equivalent to a nonnegative production rate of \emph{thermodynamic}
entropy per unit volume. Since the condition in~(\ref{eq:entropy-production-due-to-chemical-reactions-inequality})
is satisfied, a minimum entropy principle in the compressible, multicomponent,
reacting Euler equations holds. Another consequence of~(\ref{eq:entropy-production-due-to-chemical-reactions-inequality})
is that the entropy production rate for any entropy function of the
form $U=-\rho f(s)$ is nonpositive provided that $\frac{df}{ds}\geq0$.

\section{Transport step: Entropy-bounded discontinuous Galerkin scheme in
one dimension\label{sec:transport-step-EBDG}}

In this section, we detail the entropy-bounded DG methodology for
solving Equations~(\ref{eq:strang-splitting-1}) and~(\ref{eq:strang-splitting-3})
(i.e., the explicit time integrations without source terms in the
operator splitting strategy) while accounting for the modified flux
interpolation in Equation~(\ref{eq:modified-flux-projection}). We
build on related entropy-bounded DG schemes for the monocomponent
Euler equations~\citep{Zha12_2,Lv15_2,Jia18}. These schemes are
formulated as extensions of positivity-preserving DG methods~\citep{Zha10,Zha11,Zha17}
since the thermodynamic entropy is well-defined only for positive
densities and pressures and enforcement of an entropy constraint can
be straightforwardly incorporated into the positivity-preserving framework.
The one-dimensional entropy-bounded DG scheme is presented in Section~(\ref{subsec:entropy-bounded-DG-1D}),
then extended to multiple dimensions in Part II~\citep{Chi22_2}.

\subsection{Preliminaries~\label{subsec:preliminaries-EBDG-1D}}

Let $\mathcal{G}_{\sigma}$ denote the following set:
\begin{equation}
\mathcal{G_{\sigma}}=\left\{ y\mid C_{1}>0,\ldots,C_{n_{s}}>0,\rho u^{*}>0,s\geq\sigma\right\} ,
\end{equation}
where $\sigma\in\mathbb{R}$ and $u^{*}$ is the ``shifted'' internal
energy~\citep{Hua19}, computed as
\begin{equation}
u^{*}=u-u_{0}=u-\sum_{i=1}^{n_{s}}Y_{i}b_{i0},\label{eq:shifted-internal-energy}
\end{equation}
such that $u^{*}>0$ if and only if $T>0$, provided $c_{v,i}>0,\:i=1,\ldots,n_{s}$~\citep{Gio99}.
Pressure is then also positive. Note that $C_{i}>0,\;\forall i$,
implies $\rho>0$. This set is similar to that in~\citep{Du19,Du19_2},
except with the additional entropy constraint. In~\ref{sec:supporting-lemmas-specific-entropy},
we review two important lemmas from~\citep{Zha12_2}, the first of
which establishes the quasi-concavity of $s(y)$ (which holds in the
multicomponent case as well) and the second of which states $s\left(\overline{y}_{\kappa}\right)\geq\min_{x\in\kappa}s\left(y(x)\right)$.
In~\ref{sec:concavity-of-shifted-internal-energy}, we show that
$\rho u^{*}(y)$ is a concave function of the state. For a given $\sigma$,
$\mathcal{G}_{\sigma}$ is then a convex set~\citep{Zha11,Zha17,Hua19,Wu21_2}.
We assume that the exact solution to the classical Riemann problem
with initial data 
\[
y\left(x,0\right)=\begin{cases}
y_{1}, & x<0\\
y_{2}, & x>0
\end{cases}
\]
is an entropy solution that preserves positivity. Then, by Lemmas~\ref{lem:specific-entropy-of-solution-average}
and~\ref{lem:concavity-of-shifted-internal-energy}, $\mathcal{G}_{\sigma}$
is an invariant set~\citep{Gue19,Gue16_2}, i.e., $y_{1}\in\mathcal{G}_{\sigma}$
and $y_{2}\in\mathcal{G}_{\sigma}$ imply that the average of the
exact Riemann solution over a domain that includes the Riemann fan~\citep{Gue16_2}
is in $\mathcal{G}_{\sigma}$. 

Consider the following three-point system arising from a $p=0$, element-local
DG discretization with forward Euler time stepping:
\begin{gather}
y_{\kappa}^{j+1}=y_{\kappa}^{j}-\frac{\Delta t}{h}\left[\mathcal{F}^{\dagger}\left(y_{\kappa}^{j},y_{\kappa_{L}}^{j},-1\right)+\mathcal{F}^{\dagger}\left(y_{\kappa}^{j},y_{\kappa_{R}}^{j},1\right)\right],\label{eq:three-point-system}
\end{gather}
where $j$ indexes the time step, $h$ is the element size, and $\kappa_{L}$
and $\kappa_{R}$ are the elements to the left and right of $\kappa$,
respectively. Let $\lambda$ be an upper bound on the maximum wave
speed of the system. Under the following condition,
\begin{equation}
\frac{\Delta t\lambda}{h}\leq\frac{1}{2},\label{eq:p0-time-step-constraint}
\end{equation}
$y_{\kappa}^{j},y_{\kappa_{L}}^{j},y_{\kappa_{R}}^{j}\in\mathcal{G}_{\sigma}$
implies $y_{\kappa}^{j+1}\in\mathcal{G}_{\sigma}$ if certain \emph{invariant-region-preserving}
numerical fluxes are employed~\citep{Jia18}. In particular, $y_{\kappa}^{j+1}$
satisfies~\citep{Jia18,Wu21_2}
\begin{equation}
s\left(y_{\kappa}^{j+1}\right)\geq\min\left\{ s\left(y_{\kappa_{L}}^{j}\right),s\left(y_{\kappa}^{j}\right),s\left(y_{\kappa_{R}}^{j}\right)\right\} .\label{eq:p0-minimum-entropy-principle}
\end{equation}
The Godunov, Lax-Friedrichs, HLL, and HLLC fluxes qualify~\citep{Jia18}
(see also~\citep{Tad86,Lv15_2,Gou20,Per96} for additional proofs
regarding the Godunov and/or Lax-Friedrichs fluxes), some of which
allow for less restrictive time-step-size constraints than~(\ref{eq:p0-time-step-constraint}).
The proofs often rely on the notion that $\mathcal{G}_{\sigma}$ is
an invariant set, which itself invokes the aforementioned assumption
that the exact Riemann solution is an entropy solution satisfying
the positivity property. Two exceptions are the proof by Zhang and
Shu~\citep{Zha10} of the positivity property of the Lax-Friedrichs
flux and the proof by Lax~\citep{Lax71} that the Lax-Friedrichs
flux satisfies a discrete cell entropy inequality for all entropy/entropy-flux
pairs, from which a sharper version of the inequality~(\ref{eq:p0-minimum-entropy-principle})
follows~\citep{Tad86,Gou20} (note that the corresponding time-step-size
constraints for these two proofs are not necessarily the same as~(\ref{eq:p0-time-step-constraint})).
Unless otherwise specified, we employ the HLLC numerical flux~\citep{Tor13}. 

The three-point system~(\ref{eq:three-point-system}) will be crucial
in the construction of a positivity-preserving and entropy-bounded
DG scheme for $p>0$. Specifically, we will show in the following
subsection that the element average of the solution (for $p>0$) at
the next time step, $\overline{y}_{\kappa}^{j+1}$, can be expressed
as a convex combination of both pointwise values of $y_{\kappa}^{j}(x)$
and three-point systems involving pointwise values of $y_{\kappa}^{j}(x)$.
If all of said pointwise values of $y_{\kappa}^{j}(x)$ are in $\mathcal{G}_{\sigma}$,
then $\overline{y}_{\kappa}^{j+1}$ will also be in $\mathcal{G}_{\sigma}$
under a time-step-size constraint. A simple limiter, described in
Section~\ref{subsec:limiting-procedure}, is applied to ensure that
the pointwise values of $y_{\kappa}^{j}(x)$ are in $\mathcal{G}_{\sigma}$.

\subsection{Entropy-bounded, high-order discontinuous Galerkin method in one
dimension\label{subsec:entropy-bounded-DG-1D}}

Suppose $\kappa=\left[x_{L},x_{R}\right]$. Let $x_{q}$ and $w_{q}$
denote the quadrature points and weights, respectively, of a quadrature
rule with $x_{q}\in\kappa$ , $w_{q}>0$, and $\sum_{q=1}^{n_{q}}w_{q}=1$,
where $n_{q}$ is the number of quadrature points/weights. This set
of quadrature points does not need to include the endpoints, and the
quadrature rule need not be explicitly used to evaluate any integrals
in Equation~(\ref{eq:semi-discrete-form}). For now, we assume that
standard flux interpolation, as in Equation~(\ref{eq:flux-projection}),
is employed; Section~\ref{subsec:modified-flux-interpolation-1D}
discusses how to account for the modified flux interpolation in Equation~(\ref{eq:modified-flux-projection}).
As in~\citep{Lv15_2}, provided that the quadrature rule is sufficiently
accurate, the element-averaged solution in Equation~(\ref{eq:solution-element-average})
can be expanded as
\begin{align}
\overline{y}_{\kappa} & =\sum_{q=1}^{n_{q}}w_{q}y_{\kappa}(x_{q})\nonumber \\
 & =\sum_{q=1}^{n_{q}}\theta_{q}y_{\kappa}(x_{q})+\theta_{L}y_{\kappa}(x_{L})+\theta_{R}y_{\kappa}(x_{R}).\label{eq:element-average-convex-combination}
\end{align}
If the set of quadrature points includes the endpoints, then we can
simply take 
\[
\theta_{q}=\begin{cases}
w_{q} & x_{q}\neq x_{L},x_{q}\neq x_{R}\\
0 & \mathrm{otherwise}
\end{cases}
\]
and 
\[
\theta_{L}=w_{L},\quad\theta_{R}=w_{R},
\]
where $w_{L}$ and $w_{R}$ are the quadrature weights at the left
and right endpoints, respectively. If the set of quadrature points
does not include the endpoints, then we can instead take
\[
\theta_{q}=w_{q}-\theta_{L}\psi_{q}\left(x_{L}\right)-\theta_{R}\psi_{q}\left(x_{R}\right),
\]
where $\left\{ \psi_{1},\ldots,\psi_{n_{d}}\right\} $, with $n_{b}\leq n_{d}\leq n_{q}$,
is a set of Lagrange basis functions whose nodes are located at a
subset of the quadrature points, while $\psi_{v}=0$ for $v=n_{d}+1,\ldots,n_{q}$,
such that~\citep{Lv15_2}
\begin{align*}
\sum_{q=1}^{n_{q}}\theta_{q}y_{\kappa}(x_{q}) & =\sum_{q=1}^{n_{q}}\left[w_{q}-\theta_{L}\psi_{q}\left(x_{L}\right)-\theta_{R}\psi_{q}\left(x_{R}\right)\right]y_{\kappa}(x_{q})\\
 & =\sum_{q=1}^{n_{q}}w_{q}y_{\kappa}(x_{q})-\theta_{L}\sum_{q=1}^{n_{q}}y_{\kappa}(x_{q})\psi_{q}\left(x_{L}\right)-\theta_{R}\sum_{q=1}^{n_{q}}y_{\kappa}(x_{q})\psi_{q}\left(x_{R}\right)\\
 & =\sum_{q=1}^{n_{q}}w_{q}y_{\kappa}(x_{q})-\theta_{L}y_{\kappa}(x_{L})+\theta_{R}y_{\kappa}(x_{R}).
\end{align*}
$\theta_{L}$ and $\theta_{R}$ will be related to a constraint on
the time step size later in this section. The positivity of the quadrature
weights guarantees the existence of positive $\theta_{L}$ and $\theta_{R}$
that yield $\theta_{q}\geq0,\;q=1,\ldots,n_{q}$~\citep{Lv15_2}.
Furthermore, $\sum_{q}\theta_{q}+\theta_{L}+\theta_{R}=1$ since $\sum_{q}\psi_{q}=1$.
Define $\partial\mathcal{D}_{\kappa}=\left\{ x_{L,}x_{R}\right\} $,
and let $\mathcal{D}_{\kappa}$ denote the set of points at which
the state is evaluated in Equation~(\ref{eq:element-average-convex-combination}):
\[
\mathcal{D_{\kappa}}=\partial\mathcal{D}_{\kappa}\bigcup\left\{ x_{q},q=1,\ldots,n_{q}\right\} =\left\{ x_{L,}x_{R},x_{q},q=1,\ldots,n_{q}\right\} .
\]

Applying forward Euler time stepping to Equation~(\ref{eq:semi-discrete-form})
and taking $\mathfrak{v}$ to be a vector of ones (i.e., $\mathfrak{v}\in V_{h}^{0}$)
gives the fully discrete scheme satisfied by the element averages~\citep{Zha12_2,Lv15_2}:
\begin{eqnarray}
\overline{y}_{\kappa}^{j+1} & = & \overline{y}_{\kappa}^{j}-\frac{\Delta t}{h}\left[\mathcal{F}^{\dagger}\left(y_{\kappa}^{j}(x_{L}),y_{\kappa_{L}}^{j}(x_{L}),-1\right)+\mathcal{F}^{\dagger}\left(y_{\kappa}^{j}(x_{R}),y_{\kappa_{R}}^{j}(x_{R}),1\right)\right]\label{eq:fully-discrete-form-average-unexpanded}\\
 & = & \sum_{q=1}^{n_{q}}\theta_{q}y_{\kappa}^{j}(x_{q})+\theta_{L}y_{\kappa}^{j}(x_{L})-\frac{\Delta t}{h}\left[\mathcal{F}^{\dagger}\left(y_{\kappa}^{j}(x_{L}),y_{\kappa_{L}}^{j}(x_{L}),-1\right)+\mathcal{F}^{\dagger}\left(y_{\kappa}^{j}(x_{L}),y_{\kappa}^{j}(x_{R}),1\right)\right]\nonumber \\
 &  & +\theta_{R}y_{\kappa}^{j}(x_{R})-\frac{\Delta t}{h}\left[\mathcal{F}^{\dagger}\left(y_{\kappa}^{j}(x_{R}),y_{\kappa}^{j}(x_{L}),-1\right)+\mathcal{F}^{\dagger}\left(y_{\kappa}^{j}(x_{R}),y_{\kappa_{R}}^{j}(x_{R}),1\right)\right],\label{eq:fully-discrete-form-average}
\end{eqnarray}
where the second equality is due to the conservation property of the
numerical flux:
\[
\mathcal{F}^{\dagger}\left(y_{\kappa}^{j}(x_{L}),y_{\kappa}^{j}(x_{R}),1\right)=-\mathcal{F}^{\dagger}\left(y_{\kappa}^{j}(x_{R}),y_{\kappa}^{j}(x_{L}),-1\right).
\]
Note that Equations~(\ref{eq:fully-discrete-form-average-unexpanded})
and~(\ref{eq:fully-discrete-form-average}) hold regardless of whether
the integrals in Equation~(\ref{eq:semi-discrete-form}) are evaluated
using conventional quadrature or a quadrature-free implementation~\citep{Atk96,Atk98}.
Though the forward Euler time integration scheme is used here, strong-stability-preserving
Runge-Kutta (SSPRK) methods~\citep{Got01,Spi02}, which are convex
combinations of forward Euler steps, are compatible as well. Equation~(\ref{eq:fully-discrete-form-average})
then leads to the following theorem, where we use $y_{\kappa}^{-}$
to denote the exterior state along $\partial\kappa$.
\begin{thm}[\citep{Zha10,Zha12_2,Lv15_2}]
\label{thm:CFL-condition-1D}If $y_{\kappa}^{j}(x)\in\mathcal{G}_{\sigma},\;\forall x\in\mathcal{D_{\kappa}}$,
and $y_{\kappa}^{-,j}\in\mathcal{G}_{\sigma},\;\forall x\in\partial\mathcal{D}_{\kappa}$,
with
\begin{equation}
\sigma\leq\min\left\{ \min\left\{ s\left(y_{\kappa}^{j}(x)\right)\vert x\in\mathcal{D_{\kappa}}\right\} ,\min\left\{ s\left(y_{\kappa}^{-,j}(x)\right)\vert x\in\mathcal{\partial D_{\kappa}}\right\} \right\} ,\label{eq:sigma-definition}
\end{equation}
 then $\overline{y}_{\kappa}^{j+1}$ in Equation~(\ref{eq:fully-discrete-form-average-unexpanded})
is also in $\mathcal{G}_{\sigma}$ under the constraint
\begin{equation}
\frac{\Delta t\lambda}{h}\leq\frac{1}{2}\min\left\{ \theta_{L},\theta_{R}\right\} \label{eq:CFL-condition}
\end{equation}
and the conditions
\begin{equation}
\theta_{L}>0,\theta_{R}>0,\theta_{q}\geq0,q=1,\ldots,n_{q}.\label{eq:theta-conditions}
\end{equation}
\end{thm}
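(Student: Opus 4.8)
The plan is to exhibit $\overline{y}_{\kappa}^{j+1}$, as written in Equation~(\ref{eq:fully-discrete-form-average}), as a convex combination of states that are each already known to lie in $\mathcal{G}_{\sigma}$, and then invoke the convexity of $\mathcal{G}_{\sigma}$. First I would note that under the conditions~(\ref{eq:theta-conditions}) the coefficients $\theta_{q},\theta_{L},\theta_{R}$ are nonnegative and, as established just above the theorem, sum to one. The pointwise values $y_{\kappa}^{j}(x_{q})$ in the term $\sum_{q}\theta_{q}y_{\kappa}^{j}(x_{q})$ lie in $\mathcal{G}_{\sigma}$ directly by the hypothesis $y_{\kappa}^{j}(x)\in\mathcal{G}_{\sigma}$ for $x\in\mathcal{D}_{\kappa}$, so the remaining work is to handle the two grouped boundary terms.

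The key step is to recognize those grouped terms as three-point systems of the form~(\ref{eq:three-point-system}). Factoring $\theta_{L}$ out of the first grouping in Equation~(\ref{eq:fully-discrete-form-average}) yields $\theta_{L}H_{L}$ with
\[
H_{L}=y_{\kappa}^{j}(x_{L})-\frac{\Delta t}{h\theta_{L}}\left[\mathcal{F}^{\dagger}\left(y_{\kappa}^{j}(x_{L}),y_{\kappa_{L}}^{j}(x_{L}),-1\right)+\mathcal{F}^{\dagger}\left(y_{\kappa}^{j}(x_{L}),y_{\kappa}^{j}(x_{R}),1\right)\right],
\]
and similarly the second grouping equals $\theta_{R}H_{R}$ with
\[
H_{R}=y_{\kappa}^{j}(x_{R})-\frac{\Delta t}{h\theta_{R}}\left[\mathcal{F}^{\dagger}\left(y_{\kappa}^{j}(x_{R}),y_{\kappa}^{j}(x_{L}),-1\right)+\mathcal{F}^{\dagger}\left(y_{\kappa}^{j}(x_{R}),y_{\kappa_{R}}^{j}(x_{R}),1\right)\right].
\]
Each of $H_{L}$ and $H_{R}$ is exactly a three-point system~(\ref{eq:three-point-system}), centered at the boundary node $x_{L}$ (resp.\ $x_{R}$), using as neighbors the exterior trace $y_{\kappa_{L}}^{j}(x_{L})=y_{\kappa}^{-,j}(x_{L})$ (resp.\ $y_{\kappa_{R}}^{j}(x_{R})=y_{\kappa}^{-,j}(x_{R})$) and the opposite interior boundary node, but now with \emph{effective} time step $\Delta t/\theta_{L}$ (resp.\ $\Delta t/\theta_{R}$).

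Next I would verify the hypotheses required to apply the established three-point result. The three inputs to $H_{L}$, namely $y_{\kappa}^{j}(x_{L})$, $y_{\kappa}^{-,j}(x_{L})$, and $y_{\kappa}^{j}(x_{R})$, all lie in $\mathcal{G}_{\sigma}$ by the assumptions at $\mathcal{D}_{\kappa}$ and $\partial\mathcal{D}_{\kappa}$ (and symmetrically for $H_{R}$), and the choice of $\sigma$ in~(\ref{eq:sigma-definition}) guarantees that $\sigma$ is a lower bound on the specific entropy of every such input, which is precisely the premise of the entropy estimate~(\ref{eq:p0-minimum-entropy-principle}). The time-step constraint for the three-point system, applied with the effective step $\Delta t/\theta_{L}$, reads $(\Delta t/\theta_{L})\lambda/h\le\tfrac{1}{2}$, i.e.\ $\Delta t\lambda/h\le\theta_{L}/2$, and likewise $\Delta t\lambda/h\le\theta_{R}/2$ for $H_{R}$; both are simultaneously guaranteed by the single bound~(\ref{eq:CFL-condition}). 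Hence $H_{L},H_{R}\in\mathcal{G}_{\sigma}$.

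Finally, since $\overline{y}_{\kappa}^{j+1}=\sum_{q}\theta_{q}y_{\kappa}^{j}(x_{q})+\theta_{L}H_{L}+\theta_{R}H_{R}$ is a convex combination of elements of $\mathcal{G}_{\sigma}$, and $\mathcal{G}_{\sigma}$ is convex (via the quasi-concavity of $s$ and the concavity of $\rho u^{*}$ recalled in the appendices), it follows that $\overline{y}_{\kappa}^{j+1}\in\mathcal{G}_{\sigma}$. I expect the main obstacle to be the bookkeeping in the second step: correctly absorbing $\theta_{L}$ and $\theta_{R}$ into the effective time steps so that the grouped flux terms become genuine three-point systems, and then confirming that the single CFL bound~(\ref{eq:CFL-condition}) covers both three-point constraints at once. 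Beyond this, the positivity and entropy properties of each constituent state follow immediately from the hypotheses and the cited three-point result, so no further difficulty is anticipated.
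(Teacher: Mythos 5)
Your proposal is correct and follows essentially the same route as the paper's proof: rewriting $\overline{y}_{\kappa}^{j+1}$ as a convex combination of the interior pointwise states and two three-point systems (your $H_{L},H_{R}$ are exactly the paper's $y_{\kappa,s1}^{j+1},y_{\kappa,s2}^{j+1}$, with the same effective time steps $\Delta t/\theta_{L}$ and $\Delta t/\theta_{R}$ absorbing the CFL condition~(\ref{eq:CFL-condition})), then invoking the invariant-region property of the three-point scheme and the convexity of $\mathcal{G}_{\sigma}$.
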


\begin{proof}
The proof follows the same procedure as in~\citep{Zha10},~\citep{Zha12_2},~\citep{Lv15_2},
and related papers, which we review here. We first rewrite Equation~(\ref{eq:fully-discrete-form-average})
as
\[
\overline{y}_{\kappa}^{j+1}=\sum_{q=1}^{n_{q}}\theta_{q}y_{\kappa}^{j}(x_{q})+\theta_{L}y_{\kappa,s1}^{j+1}+\theta_{R}y_{\kappa,s2}^{j+1},
\]
where
\begin{align*}
y_{\kappa,s1}^{j+1} & =y_{\kappa}^{j}(x_{L})-\frac{\Delta t}{\theta_{L}h}\left[\mathcal{F}^{\dagger}\left(y_{\kappa}^{j}(x_{L}),y_{\kappa_{L}}^{j}(x_{L}),-1\right)+\mathcal{F}^{\dagger}\left(y_{\kappa}^{j}(x_{L}),y_{\kappa}^{j}(x_{R}),1\right)\right],\\
y_{\kappa,s2}^{j+1} & =y_{\kappa}^{j}(x_{R})-\frac{\Delta t}{\theta_{R}h}\left[\mathcal{F}^{\dagger}\left(y_{\kappa}^{j}(x_{R}),y_{\kappa}^{j}(x_{L}),-1\right)+\mathcal{F}^{\dagger}\left(y_{\kappa}^{j}(x_{R}),y_{\kappa_{R}}^{j}(x_{R}),1\right)\right].
\end{align*}
As such, $\overline{y}_{\kappa}^{j+1}$ is a convex combination of
$y_{\kappa}^{j}(x_{q})$ evaluated at $x_{q}$ and two three-point
systems of the type~(\ref{eq:three-point-system}). Under the conditions~(\ref{eq:CFL-condition})
and~(\ref{eq:theta-conditions}), $y_{\kappa,s1}^{j+1}$ and $y_{\kappa,s2}^{j+1}$
are both in $\mathcal{G}_{\sigma}$. It then follows that $\overline{y}_{\kappa}^{j+1}\in\mathcal{G}_{\sigma}$.

\end{proof}
\begin{rem}
A direct result of Theorem~\ref{thm:CFL-condition-1D} is that
\[
s\left(\overline{y}_{\kappa}^{j+1}\right)\geq\min\left\{ \min\left\{ s\left(y_{\kappa}^{j}(x)\right)\vert x\in\mathcal{D_{\kappa}}\right\} ,\min\left\{ s\left(y_{\kappa}^{-,j}(x)\right)\vert x\in\mathcal{\partial D_{\kappa}}\right\} \right\} .
\]
 
\end{rem}

According to the inequality~(\ref{eq:CFL-condition}), the upper
bound on the time step size is proportional to $\min\left\{ \theta_{L},\theta_{R}\right\} $.
For Gauss-Lobatto quadrature, $\theta_{L}$ and $\theta_{R}$ are
both equal to the quadrature weight corresponding to either endpoint.
See~\citep{Lv15_2} for information about Gauss-Legendre quadrature,
as well as a discussion on how to find the maximum allowable value
of $\min\left\{ \theta_{L},\theta_{R}\right\} $ for general quadrature
rules. %

To complete the construction of an entropy-bounded, high-order DG
scheme, we need to enforce not only the positivity of $y_{\kappa}^{j+1}(x),\:\forall x\in\mathcal{D_{\kappa}}$,
for all $\kappa\in\Omega$, but also $s\left(y^{j+1}(x)\right)>s_{b},\:\forall x\in\mathcal{D_{\kappa}}$,
for all $\kappa\in\Omega$, where $s_{b}$ is a lower bound on the
specific thermodynamic entropy. $s_{b}$ can vary among elements and
time steps, and the prescription of $s_{b}$ should be motivated by
the physical principles examined in Section~\ref{sec:minimum-entropy-principle}.
We will discuss $s_{b}$ in more detail in Section~\ref{sec:entropy-bound}.
In other words, we enforce $y_{\kappa}^{j+1}(x)\in\mathcal{G}_{s_{b}},\:\forall x\in\mathcal{D_{\kappa}}$,
which is done via a simple limiting procedure that will be described
in Section~\ref{subsec:limiting-procedure}. %

In practice, we relax some of the requirements introduced thus far.
First, $\lambda$ is computed in a local (instead of global) manner
and is calculated as the maximum value of $\left|v\right|+c$, where
$c$ is the speed of sound, over the points of interest. However,
$\left|v\right|+c$ does not bound the wave speeds arising from the
interactions between states at interfaces. A similar remark can be
made for most invariant-region-preserving numerical flux functions,
which typically require wave-speed estimates. %
{} Simple algorithms for bounding the wave speeds in the monocomponent
case have been developed~\citep{Gue16,Tor20}; extending these to
the multicomponent Euler equations may indeed be worthy of future
investigation, with \citep{Fro19} as one example. Second, we introduce
$\chi_{\sigma}=\rho s-\rho\sigma$, which is concave with respect
to the state~\citep{Jia18}, and revise the definition of $\mathcal{G}_{\sigma}$
as
\begin{equation}
\mathcal{G}_{\sigma}=\left\{ y\mid\rho>0,\rho u^{*}>0,C_{1}\geq0,\ldots,C_{n_{s}}\geq0,\chi_{\sigma}\geq0\right\} ,
\end{equation}
where $\chi_{\sigma}\geq0$ is a reformulation of $s\geq\sigma$ and
the species concentrations are now allowed to be equal to zero. From
a practical standpoint, allowing $C_{i}=0$ is necessary since the
concentrations are frequently zero in many reacting flow problems
of interest. Unfortunately, entropy functions of the form $U=-\rho f_{\epsilon}(s)$
and $U=-\rho s$ are no longer convex if any of the concentrations
is zero~\citep{Gou20,Gou20_2}. Furthermore, the specific thermodynamic
entropy becomes ill-defined. Nevertheless, by making use of $0\log0=0$~\citep[Chapter 6]{Gio99},
$\rho s$ and thus $\chi_{\sigma}$ remain well-defined. The entire
methodology developed here also remains well-defined, unlike entropy-stable
schemes that rely on the entropy variables associated with $U=-\rho s$.
Throughout this work, we did not encounter any major issues associated
with relaxing the two aforementioned requirements. One potential reason
is that $\overline{y}_{\kappa}^{j+1}$ can be in $\mathcal{G}_{\sigma}$
even if $\mathcal{G}_{\sigma}$ is not convex and/or some of the conditions
in Theorem~\ref{thm:CFL-condition-1D} are not satisfied. Furthermore,
in this work, we choose $\mathrm{CFL}=0.1$ to maintain low temporal
errors, which in general yields a smaller time step size than necessary.
Should issues emerge in future work, they can likely be alleviated
by adaptively decreasing the time step size~\citep{Zha17}.

Finally, we remark that $\mathcal{D}_{\kappa}$ is simply the set
of points at which limiting should be applied. Specifically, the interior
quadrature points in $\mathcal{D}_{\kappa}$ need not be explicitly
used in numerical integrations in Equation~(\ref{eq:semi-discrete-form});
if they are indeed not, the actual integration points are added to
$\mathcal{D}_{\kappa}$ as well~\citep{Zha17}.

\subsubsection{Limiting procedure\label{subsec:limiting-procedure}}

In this subsection, we describe the limiting procedure to ensure $y_{\kappa}^{j+1}(x)\in\mathcal{G}_{s_{b}},\:\forall x\in\mathcal{D_{\kappa}}$.
It is assumed that $\overline{y}_{\kappa}^{j+1}(x)\in\mathcal{G}_{s_{b}}$.
For brevity, we drop the $j+1$ superscript and $\kappa$ subscript
in this discussion. The limiting operator is of the same form as in
\citep{Wan12}, \citep{Zha17}, \citep{Jia18}, \citep{Wu21_2}, and
related papers.
\begin{enumerate}
\item First, positivity of density is enforced. Specifically, if $\rho(x)>\epsilon,\:\forall x\in\mathcal{D}_{\kappa}$,
where $\epsilon$ is a small positive number (e.g., $\epsilon=10^{-10}$),
then set $C_{i}^{(1)}=C_{i}=\sum_{j=1}^{n_{b}}C_{i}(x_{j})\phi_{j},i=1,\ldots,n_{s}$;
otherwise, compute
\[
C_{i}^{(1)}=\overline{C}_{i}+\theta^{(1)}\left(C_{i}-\overline{C}_{i}\right),\;i=1,\ldots,n_{s},
\]
with
\[
\theta^{(1)}=\frac{\rho(\overline{y})-\epsilon}{\rho(\overline{y})-\underset{x\in\mathcal{D}}{\min}\rho(y(x))}.
\]
\item Next, nonnegativity of the species concentrations is enforced. If
$C_{i}^{(1)}(x)\geq0,\:\forall x\in\mathcal{D}_{\kappa}$, then set
$C_{i}^{(2)}=C_{i}^{(1)},i=1,\ldots,n_{s}$; otherwise, compute
\[
C_{i}^{(2)}=\overline{C}_{i}+\theta^{(2)}\left(C_{i}^{(1)}-\overline{C}_{i}\right),\;i=1,\ldots,n_{s},
\]
with
\[
\theta^{(2)}=\frac{\overline{C}_{i}}{\overline{C}_{i}-\underset{x\in\mathcal{D}}{\min}C_{i}^{(1)}(x)}.
\]
Let $y^{(2)}=\left(\rho v_{1},\ldots,\rho v_{d},\rho e_{t},C_{1}^{(2)},\ldots,C_{n_{s}}^{(2)}\right)$.
\item Positivity of $\rho u^{*}(y)$ is then enforced. If $\rho u^{*}\left(y^{(2)}(x)\right)>\epsilon,\:\forall x\in\mathcal{D}_{\kappa}$,
then set $y^{(3)}=y^{(2)}$; otherwise, compute
\[
y^{(3)}=\overline{y}+\theta^{(3)}\left(y^{(2)}-\overline{y}\right),
\]
with
\[
\theta^{(3)}=\frac{\rho u^{*}(\overline{y})-\epsilon}{\rho u^{*}(\overline{y})-\underset{x\in\mathcal{D}}{\min}\rho u^{*}(y^{(2)}(x))}.
\]
It can be shown that $\rho u^{*}(y^{(3)}(x))>0,\:\forall x\in\mathcal{D}_{\kappa}$
by concavity~\citep{Wan12,Zha17}. The ``positivity-preserving limiter''
refers to the limiting procedure up to this point. The ``entropy
limiter'' corresponds to the following step (in addition to the above
steps).
\item Finally, the entropy constraint is enforced. If $\chi\left(y^{(3)}(x)\right)\geq0,\:\forall x\in\mathcal{D}_{\kappa}$,
then set $y^{(4)}=y^{(3)}$; otherwise, compute
\[
y^{(4)}=\overline{y}+\theta^{(4)}\left(y^{(3)}-\overline{y}\right),
\]
with
\[
\theta^{(4)}=\frac{\chi(\overline{y})}{\chi(\overline{y})-\underset{x\in\mathcal{D}}{\min}\chi(y^{(3)}(x))}.
\]
It can be shown that $s\left(y^{(4)}(x)\right)\geq s_{b},\:\forall x\in\mathcal{D}_{\kappa}$,
by concavity of $\chi$~\citep{Jia18,Wu21_2}.
\end{enumerate}
$y^{(4)}$ then replaces $y$ as the solution. %
{} The limiting operator is conservative, maintains stability, and in
general preserves the formal order of accuracy for smooth solutions~\citep{Zha10,Zha17,Zha12_2,Lv15_2,Jia18}.
There is extensive empirical evidence demonstrating preservation of
accuracy (see previously cited references, as well as \citep{Du19_2},
\citep{Wu21_2}, and related papers). However, the order of accuracy
can potentially deteriorate when the element average is close to the
boundary of $\mathcal{G}_{s_{b}}$~\citep{Zha17,Jia18}. Furthermore,
the linear scaling is not expected to suppress all oscillations~\citep{Zha10,Jia18,Lv15_2,Wu21_2}.
The limiting procedure described here is applied at the end of every
RK stage. 

\subsubsection{Modified flux interpolation\label{subsec:modified-flux-interpolation-1D}}

We now discuss how to account for over-integration with the modified
flux interpolation in Equation~(\ref{eq:modified-flux-projection}).
The scheme satisfied by the element averages becomes
\begin{eqnarray}
\overline{y}_{\kappa}^{j+1} & = & \overline{y}_{\kappa}^{j}-\frac{\Delta t}{h}\left[\mathcal{F}^{\dagger}\left(\widetilde{y}_{\kappa}^{j}(x_{L}),\widetilde{y}_{\kappa_{L}}^{j}(x_{L}),-1\right)+\mathcal{F}^{\dagger}\left(\widetilde{y}_{\kappa}^{j}(x_{R}),\widetilde{y}_{\kappa_{R}}^{j}(x_{R}),1\right)\right]\nonumber \\
 & = & \sum_{q=1}^{n_{q}}\theta_{q}y_{\kappa}^{j}(x_{q})+\theta_{L}y_{\kappa}^{j}(x_{L})-\frac{\Delta t}{h}\left[\mathcal{F}^{\dagger}\left(\widetilde{y}_{\kappa}^{j}(x_{L}),\widetilde{y}_{\kappa_{L}}^{j}(x_{L}),-1\right)+\mathcal{F}^{\dagger}\left(\widetilde{y}_{\kappa}^{j}(x_{L}),\widetilde{y}_{\kappa}^{j}(x_{R}),1\right)\right]\nonumber \\
 &  & +\theta_{R}y_{\kappa}^{j}(x_{R})-\frac{\Delta t}{h}\left[\mathcal{F}^{\dagger}\left(\widetilde{y}_{\kappa}^{j}(x_{R}),\widetilde{y}_{\kappa}^{j}(x_{L}),-1\right)+\mathcal{F}^{\dagger}\left(\widetilde{y}_{\kappa}^{j}(x_{R}),\widetilde{y}_{\kappa_{R}}^{j}(x_{R}),1\right)\right].\label{eq:fully-discrete-form-average-modified}
\end{eqnarray}
Equation~(\ref{eq:fully-discrete-form-average-modified}) can be
rewritten as
\[
\overline{y}_{\kappa}^{j+1}=\sum_{q=1}^{n_{q}}\theta_{q}y_{\kappa}^{j}(x_{q})+\theta_{L}y_{\kappa,s3}^{j+1}+\theta_{R}y_{\kappa,s4}^{j+1},
\]
where
\begin{align*}
y_{\kappa,s3}^{j+1} & =y_{\kappa}^{j}(x_{L})-\frac{\Delta t}{\theta_{L}h}\left[\mathcal{F}^{\dagger}\left(\widetilde{y}_{\kappa}^{j}(x_{L}),\widetilde{y}_{\kappa_{L}}^{j}(x_{L}),-1\right)+\mathcal{F}^{\dagger}\left(\widetilde{y}_{\kappa}^{j}(x_{L}),\widetilde{y}_{\kappa}^{j}(x_{R}),1\right)\right],\\
y_{\kappa,s4}^{j+1} & =y_{\kappa}^{j}(x_{R})-\frac{\Delta t}{\theta_{R}h}\left[\mathcal{F}^{\dagger}\left(\widetilde{y}_{\kappa}^{j}(x_{R}),\widetilde{y}_{\kappa}^{j}(x_{L}),-1\right)+\mathcal{F}^{\dagger}\left(\widetilde{y}_{\kappa}^{j}(x_{R}),\widetilde{y}_{\kappa_{R}}^{j}(x_{R}),1\right)\right],
\end{align*}
which are not necessarily of the type~(\ref{eq:three-point-system})
since in general, $y_{\kappa}^{j}(x_{L})\neq\widetilde{y}_{\kappa}^{j}(x_{L})$
and $y_{\kappa}^{j}(x_{R})\neq\widetilde{y}_{\kappa}^{j}(x_{R})$.
The incompatibility is a result of expressing $\overline{y}_{\kappa}$
as a convex combination of pointwise values of $y_{\kappa}(x)$ (as
opposed to $\widetilde{y}_{\kappa}(x)$). Unfortunately, the element
average of $\widetilde{y}_{\kappa}$, denoted $\overline{\widetilde{y}}_{\kappa}$,
is not necessarily equal to $\overline{y}_{\kappa}$; consequently,
$\overline{y}_{\kappa}$ cannot be directly written as a convex combination
of pointwise values of $\widetilde{y}_{\kappa}(x)$. However, if the
set of solution nodes includes the endpoints (e.g., equidistant or
Gauss-Lobatto points), then Equation~(\ref{eq:fully-discrete-form-average-modified})
recovers Equation~(\ref{eq:fully-discrete-form-average}) since $y_{\kappa}^{j}(x_{L})=\widetilde{y}_{\kappa}^{j}(x_{L})$
and $y_{\kappa}^{j}(x_{R})=\widetilde{y}_{\kappa}^{j}(x_{R})$. The
previous analysis, including Theorem~\ref{thm:CFL-condition-1D},
then holds. As such, for a nodal set that includes the endpoints,
the modified flux interpolation in Equation~(\ref{eq:modified-flux-projection})
does not introduce any additional difficulties to the discussed framework.
Note also that the second term in Equation~(\ref{eq:semi-discrete-form})
(i.e., the volumetric flux integral) does not factor into the scheme
satisfied by the element averages, so the modified flux interpolation
can be freely employed in said integral.

\subsubsection{Lower bound on specific thermodynamic entropy\label{sec:entropy-bound}}

We consider two options for specifying $s_{b}$. The first option
is a \emph{global} entropy bound~\citep{Zha12_2,Jia18,Wu21_2}:
\begin{equation}
s_{b}(y)=\min\left\{ s\left(y(x)\right)\vert x\in\Omega\right\} ,\label{eq:global-entropy-bound-exact}
\end{equation}
which can be evaluated once based on the initial condition, $y_{0}(x)$,
or updated at each time step~\citep{Gou20}. Instead of calculating
the true minimum via, for example, Newton's method, we either rely
on user-specified information or compute
\begin{equation}
s_{b}(y)=\min\left\{ s\left(y(x)\right)\vert x\in\bigcup_{\kappa\in\mathcal{T}}\mathcal{D_{\kappa}}\right\} .\label{eq:global-entropy-bound}
\end{equation}

The second option is a \emph{local} entropy bound, which should satisfy
\[
s_{b,\kappa}^{j+1}(y)\leq\min\left\{ \min\left\{ s\left(y_{\kappa}^{j}(x)\right)\vert x\in\mathcal{D_{\kappa}}\right\} ,\min\left\{ s\left(y_{\kappa}^{-,j}(x)\right)\vert x\in\mathcal{\partial D_{\kappa}}\right\} \right\} ,
\]
in order to ensure compatibility with Theorem~\ref{thm:CFL-condition-1D}
and the limiting procedure, which enforces $y_{\kappa}^{j+1}(x)\in\mathcal{G}_{s_{b,\kappa}^{j+1}},\:\forall x\in\mathcal{D_{\kappa}}$.
Lv and Ihme~\citep{Lv15_2} introduced the following local entropy
bound:
\begin{equation}
s_{b,\kappa}^{j+1}(y)=\min\left\{ \min\left\{ s\left(y_{\kappa}^{j}(x)\right)\vert x\in\kappa\right\} ,\min\left\{ s\left(y_{\kappa}^{-,j}(x)\right)\vert x\in\partial\kappa\right\} \right\} .\label{eq:lv-local-entropy-bound}
\end{equation}
They demonstrated that the local entropy bound~(\ref{eq:lv-local-entropy-bound})
can more effectively dampen overshoots and undershoots when the entropy
varies significantly throughout the domain (e.g., when multiple discontinuities
are present). %
However, we find that Equation~(\ref{eq:lv-local-entropy-bound})
is often too restrictive, as will be illustrated in Section~\ref{subsec:thermal-bubble}. 

Here, we employ a different local entropy bound, 
\begin{equation}
s_{b,\kappa}^{j+1}(y)=\min\left\{ s\left(y^{j}(x)\right)\vert x\in\kappa\cup\kappa_{L}\cup\kappa_{R}\right\} ,\label{eq:local-entropy-bound-exact}
\end{equation}
which is based on the local minimum entropy principle satisfied by
exact entropy solutions (or discrete entropy solutions in the limit
of infinite resolution). Specifically, the inequality
\begin{equation}
s(y(x,t))\geq\min\left\{ s\left(y\left(x,t_{0}\right)\right)\vert x\in\kappa\cup\kappa_{L}\cup\kappa_{R}\right\} ,\quad t\in\left[t_{0},t_{0}+\Delta t\right],\label{eq:semi-discrete-minimum-entropy-principle}
\end{equation}
can be considered the semi-discrete analog of~(\ref{eq:minimum-entropy-principle})
with $R=h/2$ and $\kappa=\left[-h/2,h/2\right]$, under the condition

\begin{equation}
\frac{\Delta tv_{\max}}{h}\leq1.\label{eq:CFL-condition-local-entropy-bound}
\end{equation}
The RHS of Equation~(\ref{eq:local-entropy-bound-exact}) is the
fully discrete analog of the RHS of the inequality~(\ref{eq:semi-discrete-minimum-entropy-principle}).
This property is imposed on the discrete solution as a means to suppress
instabilities. A similar bound was employed by Dzanic and Witherden~\citep{Dza22}
in their entropy-based filtering framework. Note that if $\Delta t$
satisfies~(\ref{eq:CFL-condition}), then it also satisfies~(\ref{eq:CFL-condition-local-entropy-bound})
since
\[
\Delta t\leq\frac{h}{2\lambda}\min\left\{ \theta_{L},\theta_{R}\right\} \leq\frac{h}{2\lambda}\leq\frac{h}{\lambda}\leq\frac{h}{v_{\max}}.
\]

An alternate viewpoint draws from the generalized Riemann problem
(GRP)~\citep{Tor13}, which differs from the \emph{classical} Riemann
problem by allowing for source terms and piecewise smooth (as opposed
to piecewise constant) initial conditions. Suppose that $y_{\kappa_{L}}$
and $y_{\kappa}$ form the initial conditions of a GRP centered at
$x_{L}$, while $y_{\kappa}$ and $y_{\kappa_{R}}$ form the initial
conditions of a GRP centered at $x_{R}$. Under the (potentially strong)
assumption that exact solutions ($y_{L}^{\mathrm{GRP}}$ and $y_{R}^{\mathrm{GRP}}$,
respectively) to the GRPs exist, as well as the assumption that those
solutions satisfy all entropy inequalities, the exact solution in
$\kappa$, $y_{\kappa}^{\mathrm{ex}}$, arising from the GRPs satisfies~\citep{Har83_3}
\[
\int_{\kappa}U\left(y_{\kappa}^{\mathrm{ex}}(x,t)\right)\leq\int_{\kappa}U\left(y_{\kappa}(x,t_{0})\right)dx-\int_{t_{0}}^{t}\mathcal{F}^{s}\left(y_{R}^{\mathrm{GRP}}\left(x_{R},\tau\right)\right)d\tau+\int_{t_{0}}^{t}\mathcal{F}^{s}\left(y_{L}^{\mathrm{GRP}}\left(x_{L},\tau\right)\right)d\tau,
\]
at least before the local Riemann problems interact~\citep{Tor02}.
With $\left(U,\mathcal{F}^{s}\right)=\left(-\rho f_{0}(s),-\rho v_{1}f_{0}(s)\right)$
and~$s_{0}=\min\left\{ s\left(y\left(x,t_{0}\right)\right)\vert x\in\kappa\cup\kappa_{L}\cup\kappa_{R}\right\} $,
the above inequality becomes
\begin{eqnarray*}
\int_{\kappa}\rho\left(y_{\kappa}^{\mathrm{ex}}(x,t)\right)f_{0}\left(s\left(y_{\kappa}^{\mathrm{ex}}(x,t)\right)\right)dx & \geq & \int_{\kappa}\rho\left(y_{\kappa}(x,t_{0})\right)\min\left\{ s\left(y_{\kappa}(x,t_{0})\right)-s_{0},0\right\} dx\\
 &  & -\int_{t_{0}}^{t}\rho v_{1}\left(y_{R}^{\mathrm{GRP}}\left(x_{R},\tau\right)\right)\min\left\{ s\left(y_{R}^{\mathrm{GRP}}\left(x_{R},\tau\right)\right)-s_{0},0\right\} dx\\
 &  & +\int_{t_{0}}^{t}\rho v_{1}\left(y_{L}^{\mathrm{GRP}}\left(x_{L},\tau\right)\right)\min\left\{ s\left(y_{L}^{\mathrm{GRP}}\left(x_{L},\tau\right)\right)-s_{0},0\right\} dx,
\end{eqnarray*}
where the first term on the RHS clearly vanishes and the second and
third terms vanish due to Equation~(\ref{eq:minimum-entropy-principle}).
As such, we have $s\left(y_{\kappa}^{\mathrm{ex}}(x,t)\right)\geq s_{0}$.
This property is then imposed on the discrete solution via the local
entropy bound in Equation~(\ref{eq:local-entropy-bound-exact}).

To our knowledge, it is unclear whether the assumption that exact
solutions to the GRP exist is valid (even for calorically perfect
gases, let alone mixtures of thermally perfect gases). Semi-analytical
methods for solving GRPs have been developed; see~\citep[Chapter 19]{Tor13}
for a detailed description and~\citep{Mon12} for a comparison of
such methods. Discussions on exact solutions to the classical Riemann
problem for mixtures of thermally perfect gases can be found in~\citep{Bec01,Bec10}.

To avoid finding the true minimum over a given element via an iterative
procedure, we relax Equation~(\ref{eq:local-entropy-bound-exact})
and instead compute 
\begin{equation}
s_{b,\kappa}^{j+1}(y)=\mathsf{c}\min\left\{ s\left(y^{j}(x)\right)\vert x\in\mathcal{D}_{\kappa}\cup\mathcal{D}_{\kappa_{L}}\cup\mathcal{D}_{\kappa_{R}}\right\} ,\label{eq:local-entropy-bound}
\end{equation}
where $\mathsf{c}\in(0,1]$ is a relaxation parameter. The entropy
limiter in Section~\ref{subsec:limiting-procedure} remains valid
since
\[
s\left(\overline{y}_{\kappa}^{j+1}\right)\geq\min\left\{ s\left(y^{j}(x)\right)\vert x\in\mathcal{D_{\kappa}}\cup\mathcal{D}_{\kappa}^{-}\right\} \geq\min\left\{ s\left(y^{j}(x)\right)\vert x\in\mathcal{D}_{\kappa}\cup\mathcal{D}_{\kappa_{L}}\cup\mathcal{D}_{\kappa_{R}}\right\} \geq s_{b,\kappa}^{j+1}(y).
\]
For $\mathsf{c}<1$, it can be useful to simultaneously account for
a well-defined global entropy bound, $s_{b}$, as
\begin{equation}
s_{b,\kappa}^{j+1}(y)=\max\left\{ s_{b},\mathsf{c}\min\left\{ s\left(y^{j}(x)\right)\vert x\in\mathcal{D}_{\kappa}\cup\mathcal{D}_{\kappa_{L}}\cup\mathcal{D}_{\kappa_{R}}\right\} \right\} .\label{eq:local-global-entropy-bound}
\end{equation}
In this work, we simply choose $\mathsf{c}=1$. An alternative approach
for estimating the true minimum in an algebraic manner can be found
in~\citep{Lv15_2}. Note that Equation~(\ref{eq:local-entropy-bound})
with $\mathsf{c}=1$ yields the true minimum for $p=1$ if the element
endpoints are the solution nodes and included in $\mathcal{D}_{\kappa}$.
For $\kappa=\left[x_{L},x_{R}\right]$, the basis functions are given
by
\[
\phi_{1}(x)=\frac{x_{R}-x}{x_{R}-x_{L}},\phi_{2}(x)=\frac{x-x_{L}}{x_{R}-x_{L}},
\]
such that $\phi_{1}+\phi_{2}=1$ and $\phi_{i}(x)\in(0,1),\:\forall x\in\left(x_{L},x_{R}\right)$.
Therefore, $y_{\kappa}(x)=y_{\kappa}\left(x_{1}\right)\phi_{1}(x)+y_{\kappa}\left(x_{2}\right)\phi_{2}(x)$
is a convex combination of the endpoint values of the solution. By
Lemma~\ref{lem:quasi-concavity-specific-entropy}, $\min_{x}s\left(y_{\kappa}(x)\right)=\min\left\{ s\left(y_{\kappa}\left(x_{1}\right)\right),s\left(y_{\kappa}\left(x_{2}\right)\right)\right\} $.
This is similarly true in two and three dimensions.

\subsubsection{Artificial viscosity\label{subsec:artificial-viscosity}}

As will be demonstrated in Section~\ref{subsec:shock-tube}, the
proposed entropy-bounded DG method does not completely suppress smaller-scale
oscillations, especially in the presence of flow-field discontinuities.
Therefore, artificial viscosity is employed in certain test cases
in Section~\ref{sec:results-1D} to more effectively dampen such
oscillations. Specifically, the following dissipation term is added
to the LHS of Equation~(\ref{eq:semi-discrete-form})~\citep{Har13}:
\begin{equation}
-\sum_{\kappa\in\mathcal{T}}\left(\mathcal{F}^{\mathrm{AV}}\left(y,\nabla y\right),\nabla\mathfrak{v}\right)_{\kappa},\label{eq:artificial-viscosity-integral}
\end{equation}
where
\[
\mathcal{F}^{\mathrm{AV}}(y,\nabla y)=\nu_{\mathrm{AV}}\nabla y,
\]
with $\nu_{\mathrm{AV}}\geq0$ denoting the artificial viscosity.
The artificial viscosity is computed as~\citep{Joh20_2}
\[
\nu_{\mathrm{AV}}=\left(C_{\mathrm{AV}}+S_{\mathrm{AV}}\right)\left(\frac{h^{2}}{p+1}\left|\frac{\partial T}{\partial y}\cdot\frac{\mathcal{R}\left(y,\nabla y\right)}{T}\right|\right).
\]
{} $C_{\mathrm{AV}}$ is a user-defined coefficient, $S_{\mathrm{AV}}$
is a shock sensor based on intra-element variations~\citep{Chi19},
and $\mathcal{R}\left(y,\nabla y\right)$ is the strong form of the
residual~(\ref{eq:conservation-law-strong-form}). Note that Equation~(\ref{eq:fully-discrete-form-average-unexpanded})
(the scheme satisfied by the element averages) remains the same since
the dissipation term~(\ref{eq:artificial-viscosity-integral}) vanishes
for $\mathfrak{v}\in V_{h}^{0}$. Therefore, Theorem~\ref{thm:CFL-condition-1D}
still holds. This type of artificial viscosity was found to effectively
suppress spurious oscillations in the vicinity of flow-field discontinuities
in multicomponent reacting flows~\citep{Joh20_2}. However, we remark
that the artificial-viscosity formulation presented here is not the
focus of this paper; other types of artificial viscosity or limiters
can be employed to dampen the small-scale instabilities that the linear-scaling
limiter fails to cure, provided that the element-local averages are
unmodified.

\section{Reaction step: Entropy-stable discontinuous Galerkin method for ODE
integration\label{sec:entropy-stable-DGODE}}

In this section, we describe the entropy-stable DG discretization
of Equation~(\ref{eq:strang-splitting-2}) (i.e., the ordinary differential
equation (ODE) with stiff chemical source terms). We build on DGODE,
the (non-entropy-stable) DG method for ODE integration described in~\citep{Joh20_2}.
The local, semi-discrete integral form of Equation~(\ref{eq:strang-splitting-2})
is given by
\begin{gather}
\int_{\kappa}\mathfrak{v}^{T}\frac{\partial y}{\partial t}dx-\int_{\kappa}\mathfrak{v}^{T}\mathcal{S}(y)dx=0.\label{eq:semi-discrete-form-ode}
\end{gather}
Approximating $\mathcal{S}\left(y\right)$ locally as a polynomial
in $V_{h}^{p}$,
\[
\mathcal{S}_{\kappa}\approx\sum_{j=1}^{n_{b}}\mathcal{S}\left(y\left(x_{j}\right)\right)\phi_{j},
\]
we can write
\[
\frac{d}{dt}y_{\kappa}\left(x_{j},t\right)-\mathcal{S}\left(y_{\kappa}\left(x_{j},t\right)\right)=0,\quad j=1,\ldots,n_{b},
\]
which is a spatially decoupled system of ODEs advanced at the solution
nodes from $t=t_{0}$ to $t=t_{f}$. Our goal here is to ensure
\begin{align*}
y_{\kappa}\left(x_{j},t_{f}\right) & \in\mathcal{G}_{s\left(y_{\kappa}\left(x_{j},t_{0}\right)\right)},\quad j=1,\ldots,n_{b}.
\end{align*}
Assuming a Gauss-Lobatto nodal set, since $\overline{y}_{\kappa}(t_{f})$
is a convex combination of the nodal values, we have $\overline{y}_{\kappa}(t_{f})\in\mathcal{G}_{s_{b}}$,
where $s_{b}$ is now given by
\[
s_{b}=\min_{j=1,\ldots,n_{b}}s\left(y_{\kappa}\left(x_{j},t_{0}\right)\right).
\]
The limiting procedure described in Section~\ref{subsec:limiting-procedure}
can then be applied to enforce $y_{\kappa}\left(x,t_{f}\right)\in\mathcal{G}_{s_{b}},\:\forall x\in\mathcal{D}_{\kappa}$
(unless $\mathcal{D}_{\kappa}=\left\{ x_{j},j=1,\ldots,n_{b}\right\} $,
in which case the limiting procedure is unnecessary).

In the following, we drop the ``$\kappa$'' and ``$j$'' subscripts,
yielding
\begin{equation}
\frac{dy}{dt}-S(y)=0,\label{eq:ode}
\end{equation}
which is the system of ODEs solved at each node. Note that the formulation
described here is slightly different from that in~\citep{Lv15}.

\subsection{Review: DGODE\label{subsec:DGODE_review}}

We first briefly review DGODE, referred to as ``standard DGODE,''
which deals with the following one-dimensional DG discretization in
time of Equation~(\ref{eq:ode}):

\begin{gather}
N_{h}\left(y,\mathfrak{v}\right)=\sum_{\epsilon\in\mathcal{E}}\left(y^{\dagger}\left(y^{+},y^{-},n\right),\left\llbracket \mathfrak{v}\right\rrbracket \right)_{\mathcal{E}}-\sum_{\kappa\in\mathcal{T}}\left(y,\frac{d\mathfrak{v}}{dt}\right)_{\kappa}-\sum_{\kappa\in\mathcal{T}}\left(\mathcal{S}\left(y\right),\mathfrak{v}\right)_{\kappa}=0\qquad\forall\mathfrak{v}\in V_{h}^{p},\label{eq:dg-ode-weak-form-discretized}
\end{gather}
where $\epsilon$, $\mathcal{E}$, $\kappa$, $\mathcal{T}$, and
$V_{h}^{p}$ are temporal analogs of the spatial counterparts defined
in Section~\ref{sec:DG-discretization}. Specifically, $\mathcal{E}$
is the set of temporal interfaces $\epsilon$, $\mathcal{T}$ is the
set of cells $\kappa$ that partitions the computational domain, $\Omega=\left(t_{0},t_{f}\right)=\left(t_{0},t_{0}+\Delta t\right)$,
and $V_{h}^{p}$ is the discrete subspace defined similarly to Equation~(\ref{eq:discrete-subspace}).
Note that ``cell'' in this context corresponds to a sub-time-step
(referred to in this section as simply ``time step'') of size $h\in(0,\Delta t]$.
Equation~(\ref{eq:dg-ode-weak-form-discretized}) is obtained by
integrating Equation~(\ref{eq:ode}) over each cell, performing integration
by parts on the time-derivative terms, and summing over the domain.
On interior faces, the numerical temporal flux (henceforth referred
to as the ``numerical state''), $y^{\dagger}$, is defined as the
upwind flux function, 

\begin{align}
y^{\dagger}\left(y^{+},y^{-},n\right)=\begin{cases}
y^{+} & \textup{ if }n\geq0\\
-y^{-} & \textup{ if }n<0
\end{cases}, & \textup{ on }\epsilon\qquad\forall\epsilon\in\mathcal{E_{I}}.\label{eq:upwind-numerical-flux}
\end{align}
On exterior interfaces, the numerical state is defined as
\begin{align}
y^{\dagger}\left(y^{+},y^{-},n\right)=\begin{cases}
y^{+} & \textup{ if }n\geq0\\
-y_{\partial}\left(y^{+}\right) & \textup{ if }n<0
\end{cases}, & \textup{ on }\epsilon\qquad\forall\epsilon\in\mathcal{E}_{\partial},\label{eq:upwind-numerical-flux-boundary}
\end{align}
where $y_{\partial}\left(y^{+}\right)$ is a prescribed boundary state.
At the inflow interface, located at $t=t_{0}$, $y_{\partial}=y_{0}$,
which is the initial condition. At the outflow interface, located
at $t=t_{f}$, $y_{\partial}=y^{+}$ (i.e., no boundary condition
is imposed).

Let $m=\dim V_{h}^{p}$ and $\left(\phi_{1},\ldots,\phi_{m}\right)$
be a basis for $V_{h}^{p}$. The discrete residual, $\mathcal{R}=\left(\mathcal{R}_{1}\left(y\right),\ldots,\mathcal{R}_{m}\left(y\right)\right)$
is defined as 
\begin{equation}
\mathcal{R}_{i}\left(y\right)=\sum_{\epsilon\in\mathcal{E}}\left(y^{\dagger}\left(y^{+},y^{-},n\right),\left\llbracket \phi_{i}\right\rrbracket \right)_{\mathcal{E}}-\sum_{\kappa\in\mathcal{T}}\left(y,\frac{d\phi{}_{i}}{dt}\right)_{\kappa}-\sum_{\kappa\in\mathcal{T}}\left(\mathcal{S}\left(y\right),\phi_{i}\right)_{\kappa}\label{eq:dg-ode-residual}
\end{equation}
for $i=1,\ldots,m$. We can then recast~(\ref{eq:dg-ode-weak-form-discretized})
as
\[
\mathcal{R}\left(y\right)=0,
\]
which is solved for $y$ via Newton's method. With initial guess $y^{0}$,
the $k$th update, $y^{k}$, is computed by solving the linear system
\begin{equation}
\frac{d}{dy}\mathcal{R}\left(y^{k}\right)\left(y^{k+1}-y^{k}\right)=-\mathcal{R}\left(y^{k}\right)\label{eq:dg-ode-newtons-method}
\end{equation}
recursively until a convergence criterion is satisfied.

Johnson and Kercher~\citep{Joh20_2} introduced a simple yet effective
$hp$-adaptation strategy to control the time step, $h$, and polynomial
degree, $p$. They applied the strategy to efficiently and accurately
integrate chemical systems with complex mechanisms. Here, we focus
on $h$-adaptation with uniform $p$. W define the norm of the local
error estimate as
\[
\mathrm{err}_{h}=\left\Vert \frac{\mathcal{R}(y)}{\epsilon_{\mathrm{abs}}+\epsilon_{\mathrm{rel}}\left|y\right|}\right\Vert ,
\]
where $\epsilon_{\mathrm{abs}}$ and $\epsilon_{\mathrm{rel}}$ are
user-specified absolute and relative tolerances, respectively. %
The convergence criterion is
\begin{equation}
\mathrm{err}_{h}<1.\label{eq:convergence-criterion}
\end{equation}
If~(\ref{eq:convergence-criterion}) is satisfied within a user-specified
number of Newton iterations, the solution is updated and a new time
step is determined using Gustafsson's method~\citep{Hai96}. Otherwise,
the time step is reduced by a factor of ten.

In this work, given that implicit time stepping schemes with order
greater than one are typically not unconditionally positivity-preserving~\citep{For11},
we introduce an additional convergence criterion that improves stability
and more robustly maintains conservation of mass:
\begin{equation}
C_{i}\geq0,\quad i=1,\ldots,n_{s}.\label{eq:concentration-criterion-exact}
\end{equation}
Specifically, we require the solution at the end of each time step
to satisfy~(\ref{eq:concentration-criterion-exact}). In our experience,
this additional criterion does not significantly restrict the time
step size. %
{} Furthermore, the above criterion can be relaxed. Instead of requiring
pointwise\emph{ }nonnegativity of the species concentrations, we can
simply require that the spatial averages of the concentrations over
the element be nonnnegative. The positivity-preserving limiter in
Section~\ref{subsec:limiting-procedure} can then be applied to guarantee
nonnegativity of the species concentrations in a pointwise manner.
Though not pursued in this work, the conservative and positivity-preserving
projection method by Sandu~\citep{San01} can also be employed when~(\ref{eq:concentration-criterion-exact})
is violated. Another possible convergence criterion is $T>0$, but
this is almost never a concern.

\subsection{Entropy stability: Preliminaries}

Entropy stability in this ODE setting is defined as 
\begin{equation}
U\left(y\left(t_{f}\right)\right)\leq U\left(y_{0}\right).\label{eq:entropy-stability-definition}
\end{equation}
Here, we consider the entropy function $U=-\rho s$. Combined with
discrete mass conservation (i.e., $\rho$ is constant), we have
\begin{equation}
s\left(y\left(t_{f}\right)\right)\geq s\left(y_{0}\right).\label{eq:entropy-stability-specific-thermodynamic-entropy}
\end{equation}
In other words, entropy stability implies that the specific thermodynamic
entropy is nondecreasing in time, which is in line with the minimum
entropy principle. 

Standard DGODE, as in Section~\ref{subsec:DGODE_review}, does not
necessarily satisfy the inequality~(\ref{eq:entropy-stability-definition}).
In the following, we introduce a modified DG discretization that is
guaranteed to be entropy-stable.

\subsection{Entropy-stable DGODE with summation-by-parts property}

We work with the following strong-form discretization, obtained by
performing integration by parts on the temporal-derivative term in
Equation~(\ref{eq:dg-ode-weak-form-discretized}):

\begin{gather}
N_{h}\left(y,\mathfrak{v}\right)=\sum_{\kappa\in\mathcal{T}}\left[\left(y^{\dagger}\left(y^{+},y^{-},n\right)-n\cdot y^{+},\mathfrak{v}^{+}\right)_{\partial\kappa}+\left(\frac{dy}{dt},\mathfrak{v}\right)_{\kappa}\right]-\sum_{\kappa\in\mathcal{T}}\left(\mathcal{S}\left(y\right),\mathfrak{v}\right)_{\kappa}=0\qquad\forall\mathfrak{v}\in V_{h}^{p}.\label{eq:dg-ode-strong-form-discretized}
\end{gather}
It is well-known that a collocated DG scheme (i.e., solution nodes
and integration points are the same) with Gauss-Lobatto points possesses
the diagonal-norm summation-by-parts (SBP) property~\citep{Che17,Gas16}.
Consider the discrete mass matrix and discrete derivative matrix derived
from Gauss-Lobatto collocation, given by
\[
\mathsf{M}_{ij}=w_{i}\delta_{ij},\quad\mathsf{D}_{ij}=\ell'_{j}(\xi_{i}),
\]
where $w_{i}$ is the $i$th Gauss-Lobatto weight, $\ell_{j}$ is
the $j$th Lagrange basis polynomial, and $\xi\in[0,1]$ is the reference
coodinate. $\mathsf{M}$ and $\mathsf{D}$ satisfy the SBP property,
\[
\mathsf{Q}+\mathsf{Q}^{T}=\mathsf{B},
\]
where $\mathsf{Q}=\mathsf{MD}$ and $\mathsf{B}=\mathrm{diag}(-1,0,\ldots,0,1)$.
The following element-local discrete form is then obtained~\citep{Fri19}:
\begin{equation}
\begin{aligned} & \left[y^{\dagger}\left(y_{\kappa}^{+},y_{\kappa}^{-},n\right)-n\cdot y_{\kappa}^{+}\right]^{T}\mathfrak{v}_{\kappa}^{+}\biggr|_{\xi=0}+\left[y^{\dagger}\left(y_{\kappa}^{+},y_{\kappa}^{-},n\right)-n\cdot y_{\kappa}^{+}\right]^{T}\mathfrak{v}_{\kappa}^{+}\biggr|_{\xi=1}\\
 & +\sum_{i=1}^{n_{b}}w_{i}\left[\sum_{j=1}^{n_{b}}\mathsf{D}_{ij}y_{\kappa}(t_{j})-h\mathcal{S}\left(y_{\kappa}(t_{i})\right)\right]^{T}\mathfrak{v}_{\kappa}(t_{i})=0.
\end{aligned}
\label{eq:dg-ode-local-discrete-form}
\end{equation}
Note that in~\citep{Joh20_2}, standard DGODE was solved using a
quadrature-free approach~\citep{Atk96,Atk98}. The discrete form~(\ref{eq:dg-ode-local-discrete-form}),
though similar, specifically invokes quadrature in order to exploit
the SBP property. 

We replace the temporal-derivative interpolation operator in Equation~(\ref{eq:dg-ode-local-discrete-form})
with a specific temporal-derivative projection operator as~\citep{Fri19}
\[
\sum_{j=1}^{n_{b}}\mathsf{D}_{ij}y_{\kappa}(t_{j})\rightarrow2\sum_{j=1}^{n_{b}}\mathsf{D}_{ij}y^{\ddagger}(y_{\kappa}(t_{i}),y_{\kappa}(t_{j})),
\]
where $y^{\ddagger}(y_{1,}y_{2})$ is a two-point numerical state
function (distinct from $y^{\dagger}$) that is consistent and symmetric.
Equation~(\ref{eq:dg-ode-local-discrete-form}) thus becomes
\begin{equation}
\begin{aligned} & \left[y^{\dagger}\left(y_{\kappa}^{+},y_{\kappa}^{-},n\right)-n\cdot y_{\kappa}^{+}\right]^{T}\mathfrak{v}_{\kappa}^{+}\biggr|_{\xi=0}+\left[y^{\dagger}\left(y_{\kappa}^{+},y_{\kappa}^{-},n\right)-n\cdot y_{\kappa}^{+}\right]^{T}\mathfrak{v}_{\kappa}^{+}\biggr|_{\xi=1}\\
 & +\sum_{i=1}^{n_{b}}w_{i}\left[2\sum_{j=1}^{n_{b}}\mathsf{D}_{ij}y^{\ddagger}(y_{\kappa}(t_{i}),y_{\kappa}(t_{j}))-h\mathcal{S}\left(y_{\kappa}(t_{i})\right)\right]^{T}\mathfrak{v}_{\kappa}(t_{i})=0.
\end{aligned}
\label{eq:dg-ode-local-discrete-form-temporal-projection}
\end{equation}
With the mean-value numerical state, 
\begin{equation}
y^{\ddagger}(y_{L},y_{R})=\frac{y_{L}+y_{R}}{2},\label{eq:mean-value-numerical-state}
\end{equation}
Equation~(\ref{eq:dg-ode-local-discrete-form-temporal-projection})
recovers Equation~(\ref{eq:dg-ode-local-discrete-form})~\citep{Gas16,Che17}.

The discretization~(\ref{eq:dg-ode-local-discrete-form-temporal-projection})
combined with the mean-value numerical state is not necessarily entropy-stable.
Although, as will be shown in Section~\ref{subsec:discrete-temporal-entropy-analysis},
the source-term discretization results in destruction of mathematical
entropy, the temporal-derivative operator may cause production of
mathematical entropy. Entropy stability is achieved only if the entropy
destruction due to the source term outweighs the entropy production
due to the temporal-derivative operator. In the following, by replacing
the mean-value numerical state with a more appropriate numerical state,
we will obtain a temporal DG scheme that is guaranteed to be entropy-stable.

\subsubsection{Entropy-conservative numerical state function}

A key ingredient in the development of an entropy-stable DG scheme
is a two-point numerical state function that satisfies the following
condition~\citep{Fri19}:
\begin{equation}
(\mathsf{v}_{R}-\mathsf{v}_{L})^{T}y^{\ddagger}(y_{L},y_{R})=\mathcal{U}_{R}-\mathcal{U}_{L}.\label{eq:numerical-state-entropy-conservative-condition}
\end{equation}
A numerical state that satisfies~(\ref{eq:numerical-state-entropy-conservative-condition})
is \emph{entropy-conservative}. Note that an entropy-conservative
numerical \emph{flux} satisfies an analogous condition:
\begin{equation}
(\mathsf{v}_{R}-\mathsf{v}_{L})^{T}\mathcal{F}^{\ddagger}(y_{L},y_{R})=\mathcal{F}_{R}^{p}-\mathcal{F}_{L}^{p}.\label{eq:numerical-flux-entropy-conservative-condition}
\end{equation}
Gouasmi et al.~\citep{Gou20_2} derived a simple, closed-form entropy-conservative
numerical flux for the multicomponent Euler equations with the entropy
function $U=-\rho s$. They built on the techniques by Roe~\citep{Roe06}
originally used to construct an entropy-conservative numerical flux
for the monocomponent Euler equations. Said techniques rely on~(\ref{eq:numerical-flux-entropy-conservative-condition})
as a starting point. An analogous procedure, instead using~(\ref{eq:numerical-state-entropy-conservative-condition})
as a starting point, is employed here to derive an entropy-conservative
numerical state function. 

With $\left\llbracket \cdot\right\rrbracket $ denoting the jump operator,
the entropy conservation condition~(\ref{eq:numerical-state-entropy-conservative-condition})
can be expressed as
\begin{equation}
\left\llbracket \mathsf{v}\right\rrbracket {}^{T}y^{\ddagger}=\left\llbracket \mathcal{U}\right\rrbracket .\label{eq:numerical-state-entropy-conservative-condition-jump-form}
\end{equation}
For consistency with~\citep{Gou20_2}, we first work with a re-ordered
state vector where the species concentrations are replaced with partial
densities:
\begin{equation}
y=\left(\rho_{1},\ldots,\rho_{n_{s}},\rho v_{1},\ldots,\rho v_{d},\rho e_{t}\right)^{T}.\label{eq:state-vector-Gouasmi}
\end{equation}
We employ the notation
\[
y^{\ddagger}=\left(y_{1,1}^{\ddagger},\ldots,y_{1,n_{s}}^{\ddagger},y_{2,1}^{\ddagger},\ldots,y_{2,d}^{\ddagger},y_{3}^{\ddagger}\right),
\]
where the first $n_{s}$ components correspond to the partial densities,
the next $d$ components correspond to the momentum, and the last
component corresponds to the total energy. Let $z$ denote the vector
\[
z=\left(\rho_{1},\ldots,\rho_{n_{s}},v_{1},\ldots,v_{d},1/T\right)=\left(z_{1,1},\ldots,z_{1,n_{s}},z_{2,1},\ldots,z_{2,d},z_{3}\right).
\]
With the entropy function $U=-\rho s$, the entropy variables and
entropy potential are given by
\[
\mathsf{v}=\left(\frac{g_{1}-\frac{1}{2}\sum_{k=1}^{d}v_{k}v_{k}}{T},\ldots,\frac{g_{n_{s}}-\frac{1}{2}\sum_{k=1}^{d}v_{k}v_{k}}{T},\frac{v_{1}}{T},\ldots,\frac{v_{d}}{T},-\frac{1}{T},\right)^{T},\quad\mathcal{U}=\sum_{i=1}^{n_{s}}W_{i}\rho_{i}.
\]
Next, we introduce the arithmetic mean, logarithmic mean, and product
operators,
\begin{align}
\average{\alpha} & =\frac{\alpha_{L}+\alpha_{R}}{2},\nonumber \\
\alpha^{\ln} & =\begin{cases}
\alpha_{L}, & \text{if }\alpha_{L}=\alpha_{R}\\
0, & \text{if }\alpha_{L}=0\text{ or }\alpha_{R}=0\\
\frac{\alpha_{R}-\alpha_{L}}{\ln\alpha_{R}-\ln\alpha_{L}}, & \text{otherwise},
\end{cases}\label{eq:numerical-state-operators}\\
\alpha^{\times} & =\alpha_{L}\alpha_{R}\nonumber 
\end{align}
which are equipped with the identities
\begin{align*}
\average{\alpha\beta} & =\alpha\left\llbracket \beta\right\rrbracket +\beta\left\llbracket \alpha\right\rrbracket \\
\left\llbracket \ln\alpha\right\rrbracket  & =\frac{\left\llbracket \alpha\right\rrbracket }{\alpha^{\ln}}\\
\left\llbracket \alpha\right\rrbracket  & =-\alpha^{\times}\left\llbracket \frac{1}{\alpha}\right\rrbracket ,\alpha\neq0.
\end{align*}
Note that the three operators in~(\ref{eq:numerical-state-operators})
are symmetric; the first two are also consistent (the product operator
is consistent with $\alpha^{2}$). To compute the logarithmic mean
in a numerically stable manner, we employ the procedure by Ismail
and Roe~\citep{Ism09}. The jump in the entropy potential can then
be written as
\begin{equation}
\left\llbracket \mathcal{U}\right\rrbracket =\sum_{i=1}^{n_{s}}W_{i}\left\llbracket \rho_{i}\right\rrbracket =\sum_{i=1}^{n_{s}}W_{i}\left\llbracket z_{1,i}\right\rrbracket ,\label{eq:jump-in-entropy-potential}
\end{equation}
while the jump in the entropy variables, after some algebraic manipulation,
can be expressed as~\citep{Gou20_2}
\begin{equation}
\left\llbracket \mathsf{v}\right\rrbracket =\begin{pmatrix}\left\llbracket z_{3}\right\rrbracket \mathsf{X}_{1}+\left\llbracket z_{1,1}\right\rrbracket \frac{W_{1}}{\rho_{1}^{\ln}}-\sum_{k=1}^{d}\left\llbracket z_{2,k}\right\rrbracket \average{\frac{1}{T}}\average{v_{k}}\\
\vdots\\
\left\llbracket z_{3}\right\rrbracket \mathsf{X}_{n_{s}}+\left\llbracket z_{1,n_{s}}\right\rrbracket \frac{W_{n_{s}}}{\rho_{n_{s}}^{\ln}}-\sum_{k=1}^{d}\left\llbracket z_{2,k}\right\rrbracket \average{\frac{1}{T}}\average{v_{k}}\\
\average{z_{3}}\left\llbracket z_{2,1}\right\rrbracket +\average{z_{2,1}}\left\llbracket z_{3}\right\rrbracket \\
\vdots\\
\average{z_{3}}\left\llbracket z_{2,d}\right\rrbracket +\average{z_{2,d}}\left\llbracket z_{3}\right\rrbracket \\
-\left\llbracket z_{3}\right\rrbracket 
\end{pmatrix},\label{eq:jump-in-entropy-variables}
\end{equation}
where 
\[
\mathsf{X}_{i}=b_{i0}+\frac{b_{i1}}{\left(1/T\right)^{\ln}}+\sum_{r=2}^{n_{p}+1}b_{ir}\left(\mathsf{f}_{r-1}(T)T^{\times}\right)-\frac{1}{2}\sum_{k=1}^{d}\left\{ \left\{ v_{d}^{2}\right\} \right\} ,
\]
with $\mathsf{f}_{r}(\alpha)$ denoting a special averaging operator,
consistent with $\alpha^{r-1}$, that satisfies $\left\llbracket \alpha^{r}\right\rrbracket =r\mathsf{f}_{r}(\alpha)\left\llbracket \alpha\right\rrbracket $.
For $r=1,2,3,4$, $\mathsf{f}_{r}(\alpha)$ is defined as
\begin{align*}
\mathsf{f}_{1}(\alpha) & =1,\\
\mathsf{f}_{2}(\alpha) & =\average{\alpha},\\
\mathsf{f}_{3}(\alpha) & =\frac{2}{3}\average{\alpha}\average{\alpha}+\frac{1}{3}\average{\alpha^{2}},\\
\mathsf{f}_{4}(\alpha) & =\average{\alpha}\average{\alpha^{2}}.
\end{align*}
$\mathsf{f}_{r}(\alpha)$ can be derived for $r>4$ as well~\citep{Gou20_2}.
Using Equations~(\ref{eq:jump-in-entropy-potential}) and~(\ref{eq:jump-in-entropy-variables}),
we rewrite the entropy conservation condition in time~(\ref{eq:numerical-state-entropy-conservative-condition-jump-form})
as a requirement that a linear combination of the jumps in the components
of $z$ equals zero:
\begin{align*}
 & \sum_{i=1}^{n_{s}}\left\llbracket z_{1,i}\right\rrbracket \left(y_{1,i}^{\ddagger}\frac{W_{i}}{\rho_{i}^{\ln}}-W_{i}\right)+\sum_{k=1}^{d}\left\llbracket z_{2,k}\right\rrbracket \left(y_{2,k}^{\ddagger}\average{z_{3}}-\sum_{i=1}^{n_{s}}y_{1,i}^{\ddagger}\average{\frac{1}{T}}\average{v_{k}}\right)\\
 & +\left\llbracket z_{3}\right\rrbracket \left(\sum_{i=1}^{n_{s}}y_{1,i}^{\ddagger}\mathsf{X}_{i}+\sum_{k=1}^{d}y_{2,k}^{\ddagger}\average{z_{2,k}}-y_{3}^{\ddagger}\right)=0.
\end{align*}
Invoking the independence of these jumps yields a system of $m$ equations:
\begin{align*}
y_{1,i}^{\ddagger}\frac{W_{i}}{\rho_{i}^{\ln}}-W_{i} & =0,\quad i=1,\ldots,n_{s},\\
y_{2,k}^{\ddagger}\average{z_{3}}-\sum_{i=1}^{n_{s}}y_{1,i}^{\ddagger}\average{\frac{1}{T}}\average{v_{k}} & =0,\quad k=1,\ldots,d,\\
\sum_{i=1}^{n_{s}}y_{1,i}^{\ddagger}\mathsf{X}_{i}+\sum_{k=1}^{d}y_{2,k}^{\ddagger}\average{z_{2,k}}-y_{3}^{\ddagger} & =0.
\end{align*}
Solving for the components of $y^{\ddagger}$ then gives

\begin{equation}
y^{\ddagger}=\begin{pmatrix}\rho_{1}^{\ln}\\
\vdots\\
\rho_{n_{s}}^{\ln}\\
\sum_{i=1}^{n_{s}}\rho_{i}^{\ln}\average{v_{1}}\\
\vdots\\
\sum_{i=1}^{n_{s}}\rho_{i}^{\ln}\average{v_{d}}\\
\sum_{k=1}^{d}\sum_{i=1}^{n_{s}}\rho_{i}^{\ln}\average{v_{k}}\average{v_{k}}+\sum_{i=1}^{n_{s}}\rho_{i}^{\ln}\left[b_{i0}+\frac{b_{i1}}{\left(1/T\right)^{\ln}}+\sum_{r=2}^{n_{p}+1}b_{ir}\left(\mathsf{f}_{r-1}(T)T^{\times}\right)-\frac{1}{2}\sum_{k=1}^{d}\average{v_{k}^{2}}\right]
\end{pmatrix}.\label{eq:numerical-state-EC}
\end{equation}
Note that Equation~(\ref{eq:numerical-state-EC}) corresponds to
the state vector~(\ref{eq:state-vector-Gouasmi}). A simple re-ordering
and linear mapping~\citep{Gou20_2} yields

\begin{equation}
y^{\ddagger}=\begin{pmatrix}\sum_{i=1}^{n_{s}}\rho_{i}^{\ln}\average{v_{1}}\\
\vdots\\
\sum_{i=1}^{n_{s}}\rho_{i}^{\ln}\average{v_{d}}\\
\sum_{k=1}^{d}\sum_{i=1}^{n_{s}}\rho_{i}^{\ln}\average{v_{k}}\average{v_{k}}+\sum_{i=1}^{n_{s}}\rho_{i}^{\ln}\left[b_{i0}+\frac{b_{i1}}{\left(1/T\right)^{\ln}}+\sum_{r=2}^{n_{p}+1}b_{ir}\left(\mathsf{f}_{r-1}(T)T^{\times}\right)-\frac{1}{2}\sum_{k=1}^{d}\average{v_{k}^{2}}\right]\\
C_{1}^{\ln}\\
\vdots\\
C_{n_{s}}^{\ln}
\end{pmatrix},\label{eq:numerical-state-EC-transformed}
\end{equation}
which corresponds to the original state vector~(\ref{eq:reacting-navier-stokes-state}).
We then have the following theorem.
\begin{thm}
The two-point numerical state function in Equation~(\ref{eq:numerical-state-EC-transformed})
is entropy-conservative, consistent, and symmetric.
\end{thm}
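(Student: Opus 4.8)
The plan is to dispatch symmetry and consistency as immediate consequences of the averaging operators, and to obtain entropy conservation essentially for free from the construction that precedes the theorem statement; the derivation already carried out is, read in the forward direction, the proof of the core property.

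First I would verify symmetry. Every entry of $y^{\ddagger}$ in~(\ref{eq:numerical-state-EC-transformed}) is built only from the operators $\average{\cdot}$, $(\cdot)^{\ln}$, $(\cdot)^{\times}$, and $\mathsf{f}_{r}$, each of which is symmetric under interchange of its two arguments (as noted beneath~(\ref{eq:numerical-state-operators})). Consequently $y^{\ddagger}(y_{L},y_{R})=y^{\ddagger}(y_{R},y_{L})$ componentwise. For consistency I would set $y_{L}=y_{R}=y$ and use $\average{\alpha}\to\alpha$, $\alpha^{\ln}\to\alpha$, $T^{\times}\to T^{2}$, and $\mathsf{f}_{r-1}(T)\to T^{r-2}$ (consistency of $\mathsf{f}_{r-1}$ with $T^{r-2}$). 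The momentum entries then collapse to $\average{v_{k}}\sum_{i}\rho_{i}^{\ln}\to\rho v_{k}$, the concentration entries to $C_{i}^{\ln}\to C_{i}$, and, using $u_{i}=\sum_{k=0}^{n_{p}+1}b_{ik}T^{k}$, the bracketed internal-energy term reduces to $u_{i}-\tfrac{1}{2}\sum_{k}v_{k}v_{k}$; combined with the kinetic double sum $\sum_{k}\sum_{i}\rho_{i}^{\ln}\average{v_{k}}\average{v_{k}}\to\rho\sum_{k}v_{k}v_{k}$, the energy entry becomes $\rho u+\tfrac{1}{2}\rho\sum_{k}v_{k}v_{k}=\rho e_{t}$. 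Hence $y^{\ddagger}(y,y)=y$.

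The core property, entropy conservation, amounts to verifying~(\ref{eq:numerical-state-entropy-conservative-condition-jump-form}), namely $\left\llbracket \mathsf{v}\right\rrbracket ^{T}y^{\ddagger}=\left\llbracket \mathcal{U}\right\rrbracket $. I would substitute the jump representations~(\ref{eq:jump-in-entropy-potential}) and~(\ref{eq:jump-in-entropy-variables}) into both sides and collect the difference $\left\llbracket \mathsf{v}\right\rrbracket ^{T}y^{\ddagger}-\left\llbracket \mathcal{U}\right\rrbracket $ by the jumps $\left\llbracket z_{1,i}\right\rrbracket $, $\left\llbracket z_{2,k}\right\rrbracket $, and $\left\llbracket z_{3}\right\rrbracket $. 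This reproduces exactly the three families of equations displayed immediately above the theorem (the $n_{s}$ partial-density relations, the $d$ momentum relations, and the single energy relation), and it suffices that the coefficient multiplying each jump vanish. Since the components in~(\ref{eq:numerical-state-EC}) were defined precisely as the solution of that linear system ($y_{1,i}^{\ddagger}=\rho_{i}^{\ln}$, and so on), each coefficient is identically zero and the entropy conservation condition holds.

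It remains to transfer the three properties from the ordering in~(\ref{eq:numerical-state-EC}) to the original ordering in~(\ref{eq:numerical-state-EC-transformed}) associated with~(\ref{eq:reacting-navier-stokes-state}). This is routine: the re-ordering and linear mapping permute the state components and apply the same permutation to $\mathsf{v}$, leaving the scalar $\left\llbracket \mathsf{v}\right\rrbracket ^{T}y^{\ddagger}$ and the scalar $\mathcal{U}$ unchanged, while symmetry and consistency are preserved entrywise. I expect the only genuinely delicate step in a fully detailed write-up to be the jump identity~(\ref{eq:jump-in-entropy-variables}) for the entropy variables, in particular the form of $\mathsf{X}_{i}$, which hinges on the operator identities $\left\llbracket \ln\alpha\right\rrbracket =\left\llbracket \alpha\right\rrbracket /\alpha^{\ln}$ and $\left\llbracket T^{r}\right\rrbracket =r\,\mathsf{f}_{r}(T)\left\llbracket T\right\rrbracket $ applied to the temperature-dependent internal-energy polynomial. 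Since~(\ref{eq:jump-in-entropy-variables}) is stated in the excerpt and attributed to Gouasmi et al., I would take it as given, leaving only the bookkeeping above.
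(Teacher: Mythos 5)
Your proposal is correct and follows essentially the same route as the paper: entropy conservation holds by construction (the components of $y^{\ddagger}$ were defined as the solution of the linear system obtained from the jump condition), symmetry follows from the symmetry of the averaging operators, and consistency follows by setting $y_{L}=y_{R}$ and using that $\mathsf{f}_{r-1}(T)\,T^{\times}$ is consistent with $T^{r}$. Your write-up is somewhat more explicit than the paper's (notably in unpacking "by construction" and in checking invariance of the scalar condition under the re-ordering and linear mapping to the original state vector), but there is no substantive difference in approach.
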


\begin{proof}
The numerical state~(\ref{eq:numerical-state-EC-transformed}) is
entropy-conservative by construction. Since all of the introduced
operators are symmetric, the numerical state is symmetric. Finally,
recognizing that $\mathsf{f}_{r-1}(\alpha)\alpha^{\times}$ is consistent
with $\alpha^{r}$, taking the left and right states to be the same
yields
\[
y^{\ddagger}=\begin{pmatrix}\rho v_{1}\\
\vdots\\
\rho v_{d}\\
\sum_{k=1}^{d}\rho v_{k}v_{k}+\sum_{i=1}^{n_{s}}\rho_{i}\left[b_{i0}+b_{i1}T+\sum_{r=2}^{n_{p}+1}b_{ir}T^{r}-\frac{1}{2}\sum_{k=1}^{d}v_{k}^{2}\right]\\
C_{1}\\
\vdots\\
C_{n_{s}}
\end{pmatrix}=\begin{pmatrix}\rho v_{1}\\
\vdots\\
\rho v_{d}\\
\rho u+\frac{1}{2}\rho\sum_{k=1}^{d}v_{k}v_{k}\\
C_{1}\\
\vdots\\
C_{n_{s}}
\end{pmatrix}=y.
\]
Therefore, it is also consistent.
\end{proof}
\begin{rem}
The derived entropy-conservative numerical state function~(\ref{eq:numerical-state-EC-transformed})
can be directly used in entropy-stable space-time DG schemes based
on SBP operators~\citep{Fri19}.
\end{rem}

\subsubsection{\label{subsec:discrete-temporal-entropy-analysis}Discrete temporal
entropy analysis}

We analyze the entropy stability of the proposed temporal DG discretization
in the following theorem. 
\begin{thm}
Consider the DG discretization~(\ref{eq:dg-ode-local-discrete-form-temporal-projection}).
Assume that $y^{\dagger}$ is the upwind numerical state and $y^{\ddagger}$
is the entropy-conservative numerical state in~(\ref{eq:numerical-state-EC-transformed}).
Then, for the entropy function $U=-\rho s$, the resulting DG discretization
is entropy-stable.
\end{thm}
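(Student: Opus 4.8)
The plan is to run the canonical ``contract with the entropy variables'' argument used for summation-by-parts discretizations~\citep{Fri19}, specialized to the temporal DGODE operator. Concretely, I would set the test function equal to the nodal entropy variables, $\mathfrak{v}_{\kappa}(t_i)=\mathsf{v}(y_{\kappa}(t_i))$ with $\mathsf{v}=(\partial U/\partial y)^T$ for $U=-\rho s$, substitute into the local discrete form~(\ref{eq:dg-ode-local-discrete-form-temporal-projection}), and sum over all temporal cells. The contracted equation then splits into three pieces: a volumetric (temporal-derivative) term built from $y^{\ddagger}$, the upwind interface terms, and the source term. The objective is to show that, after telescoping, the first two combine to $U(y(t_f))-U(y_0)+D$ with $D\ge0$, while the source term is nonpositive, which delivers precisely the entropy-stability statement~(\ref{eq:entropy-stability-definition}).

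For the volumetric term I would exploit the diagonal-norm SBP property $\mathsf{Q}+\mathsf{Q}^T=\mathsf{B}$ with $\mathsf{Q}=\mathsf{MD}$, together with the symmetry and consistency of $y^{\ddagger}$. Writing the contracted term as $2\sum_{i,j}\mathsf{Q}_{ij}\,\mathsf{v}_i^T y^{\ddagger}_{ij}$, I would split $\mathsf{Q}$ into its skew part and the boundary matrix $\mathsf{B}$, relabel the dummy indices in one copy using $y^{\ddagger}_{ij}=y^{\ddagger}_{ji}$, and then invoke the entropy-conservation property~(\ref{eq:numerical-state-entropy-conservative-condition-jump-form}), i.e. $(\mathsf{v}_i-\mathsf{v}_j)^T y^{\ddagger}_{ij}=\mathcal{U}_i-\mathcal{U}_j$, which is exactly the content of the preceding theorem. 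Since the rows of $\mathsf{Q}$ sum to zero and $\mathsf{B}=\mathrm{diag}(-1,0,\ldots,0,1)$, every interior contribution cancels, and after using $\mathcal{U}=\mathsf{v}^T y-U$ and consistency $y^{\ddagger}_{ii}=y_i$ the volume term collapses, per cell, to $U$ at the later endpoint minus $U$ at the earlier endpoint. This is the step where entropy conservation of $y^{\ddagger}$ does the essential work.

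I would then assemble the interface terms. Regrouping the cell-endpoint entropies from the volume term with the upwind surface contributions~(\ref{eq:upwind-numerical-flux}), each interior temporal interface yields a term of the form $U(y_L)-U(y_R)+(y_R-y_L)^T\mathsf{v}(y_R)$, where $y_L$ and $y_R$ are the past- and future-side traces; by convexity of $U$ this is a nonnegative Bregman-type remainder. The global endpoints are treated via the boundary prescription~(\ref{eq:upwind-numerical-flux-boundary}): the outflow at $t_f$ contributes exactly $U(y(t_f))$, while the inflow at $t_0$, which injects the initial data $y_0$, contributes $\mathcal{U}(t_0^+)-\mathsf{v}(t_0^+)^T y_0\ge -U(y_0)$, again by convexity. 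Collecting these gives $\sum_{\kappa}(\text{volume}+\text{interface})=U(y(t_f))-U(y_0)+D$ with $D\ge0$. Finally, the source term contracts to $-h\sum_i w_i\,\mathsf{v}_i^T\mathcal{S}(y_i)$; specializing the reacting-flow computation of Section~\ref{subsec:minimum-entropy-principle-reacting} to $f(s)=s$ (so $df/ds=1$) gives $\mathsf{v}^T\mathcal{S}=\sum_i W_i\omega_i g_i/T\le0$ by~(\ref{eq:entropy-production-due-to-chemical-reactions-inequality}). Substituting into the contracted equation yields $U(y(t_f))-U(y_0)=-D+h\sum_i w_i\,\mathsf{v}_i^T\mathcal{S}(y_i)\le0$.

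The main obstacle I anticipate is the interface and boundary bookkeeping rather than any single hard estimate: one must pin down the sign convention of the upwind state~(\ref{eq:upwind-numerical-flux}) (the temporal ``upwind'' direction encodes causality), verify that the combined two-sided interface contribution is genuinely the nonnegative Bregman remainder of the convex $U$, and check that the inflow/outflow prescriptions~(\ref{eq:upwind-numerical-flux-boundary}) reproduce $-U(y_0)$ and $+U(y(t_f))$ up to nonnegative dissipation. A secondary caveat is that $U=-\rho s$ and its entropy variables are well defined only for positive density and nonnegative concentrations, so the argument implicitly relies on the positivity criterion~(\ref{eq:concentration-criterion-exact}) enforced in DGODE; with that in hand, convexity of $U$ supplies every sign the proof needs.
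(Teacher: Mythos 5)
Your proposal is correct and follows essentially the same route as the paper's proof: contract the discrete form with the nodal entropy variables, handle the temporal-derivative and upwind interface terms via the SBP property together with the entropy-conservative/entropy-stable properties of $y^{\ddagger}$ and $y^{\dagger}$, and bound the source contribution using the nonpositive chemical entropy production $\mathsf{v}^{T}\mathcal{S}=\sum_{i}W_{i}\omega_{i}g_{i}/T\leq0$ from Section~\ref{subsec:minimum-entropy-principle-reacting}. The only difference is one of detail: the paper delegates the inequality $U\left(y\left(t_{f}\right)\right)-U\left(y_{0}\right)\leq\sum_{\kappa\in\mathcal{T}}\mathcal{A}_{\kappa}$ to the proof of Theorem 1 in~\citep{Fri19}, whereas you carry out that telescoping and convexity (Bregman-remainder) argument explicitly, which correctly reproduces the cited result.
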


\begin{proof}
Take $\mathfrak{v}_{\kappa}$ in~(\ref{eq:dg-ode-local-discrete-form-temporal-projection})
to be the polynomial interpolant of the entropy variables, such that
\[
\mathfrak{v}_{\kappa}\left(t_{j}\right)=\mathsf{v}\left(y_{\kappa}\left(t_{j}\right)\right),\quad j=1,\ldots,n_{b},
\]
which results in
\begin{equation}
\begin{split} & \sum_{\kappa\in\mathcal{T}}\left[\left[y^{\dagger}\left(y_{\kappa}^{+},y_{\kappa}^{-},n\right)-n\cdot y_{\kappa}^{+}\right]^{T}\mathsf{v}\left(y_{\kappa}^{+}\right)\biggr|_{\xi=0}+\left[y^{\dagger}\left(y_{\kappa}^{+},y_{\kappa}^{-},n\right)-n\cdot y_{\kappa}^{+}\right]^{T}\mathsf{v}\left(y_{\kappa}^{+}\right)\biggr|_{\xi=1}\right]\\
 & +\sum_{\kappa\in\mathcal{T}}\left[\sum_{i=1}^{n_{b}}w_{i}\left(2\sum_{j=1}^{n_{b}}\mathsf{D}_{ij}y^{\ddagger}(y_{\kappa}(t_{i}),y_{\kappa}(t_{j}))-h\mathcal{S}\left(y_{\kappa}(t_{i})\right)\right)^{T}\mathsf{v}\left(y_{\kappa}(t_{i})\right)\right]=0.
\end{split}
\label{eq:dg-ode-local-discrete-form-temporal-projection-entropy-variables}
\end{equation}
We introduce $\mathcal{A}_{\kappa}$ and $\mathcal{B}_{\kappa}$,
defined as
\begin{align*}
\mathcal{A}_{\kappa}= & \left[y^{\dagger}\left(y^{+},y^{-},n\right)-n\cdot y^{+}\right]^{T}\mathsf{v}\left(y_{\kappa}^{+}\right)\biggr|_{\xi=0}+\left[y^{\dagger}\left(y^{+},y^{-},n\right)-n\cdot y^{+}\right]^{T}\mathsf{v}\left(y_{\kappa}^{+}\right)\biggr|_{\xi=1}\\
 & +\sum_{i=1}^{n_{b}}w_{i}\left[2\sum_{j=1}^{n_{b}}\mathsf{D}_{ij}y^{\ddagger}(y(t_{i}),y(t_{j}))\right]^{T}\mathsf{v}\left(y_{\kappa}(t_{i})\right)\\
\mathcal{B}_{\kappa}= & -\sum_{i=1}^{n_{b}}w_{i}hS\left(y_{\kappa}(t_{i})\right)^{T}\mathsf{v}\left(y_{\kappa}(t_{i})\right),
\end{align*}
such that Equation~(\ref{eq:dg-ode-local-discrete-form-temporal-projection-entropy-variables})
can be rewritten as
\begin{equation}
\sum_{\kappa\in\mathcal{T}}\mathcal{A}_{\kappa}+\sum_{\kappa\in\mathcal{T}}\mathcal{B}_{\kappa}=0.\label{eq:entropy-stability-proof-A-plus-B}
\end{equation}
By invoking the SBP property and the fact that the upwind numerical
state function is an entropy-stable numerical state (i.e., it satisfies
$\left\llbracket \mathsf{v}\right\rrbracket {}^{T}y^{\dagger}\leq\left\llbracket \mathcal{U}\right\rrbracket $)~\citep{Fri19},
we obtain the inequality
\begin{equation}
U\left(y\left(t_{f}\right)\right)-U\left(y_{0}\right)\leq\sum_{\kappa\in\mathcal{T}}\mathcal{A}_{\kappa},\label{eq:entropy-stability-proof-A-inequality}
\end{equation}
the proof of which is very similar to that in~\citep[Theorem 1]{Fri19}
(just without any spatial component) and is therefore not included
here. It remains to analyze $\sum_{\kappa\in\mathcal{T}}\mathcal{B}_{\kappa}$.
The quantity $S\left(y_{\kappa}(t_{i})\right)^{T}\mathsf{v}\left(y_{\kappa}(t_{i})\right)$
is simply the pointwise entropy production rate due to the chemical
source terms, which, as demonstrated in Section~\ref{subsec:minimum-entropy-principle-reacting},
is nonpositive. Since $w_{i}>0$ and $h>0$, we have 
\begin{equation}
\sum_{\kappa\in\mathcal{T}}\mathcal{B}_{\kappa}\geq0.\label{eq:entropy-stability-proof-B-inequality}
\end{equation}
Combining Equations~(\ref{eq:entropy-stability-proof-A-plus-B}),~(\ref{eq:entropy-stability-proof-A-inequality}),
and~(\ref{eq:entropy-stability-proof-B-inequality}) gives the inequality~(\ref{eq:entropy-stability-definition}),
which completes the proof.
\end{proof}
In the following, we refer to the proposed entropy-stable DG discretization
as simply ``entropy-stable DGODE.''
\begin{rem}
Entropy-stable DGODE is well-defined for zero concentrations. However,
during early iterations of Newton's method, negative concentrations
can occur, making the formulation ill-defined. As a simple remedy,
we use the solution obtained with the mean-value numerical state~(\ref{eq:mean-value-numerical-state})
as an initial guess, which we find to be sufficiently robust since
this initial guess is generally close to the entropy-stable DGODE
solution (recall that taking $y^{\ddagger}$ to be the mean-value
numerical state essentially recovers standard DGODE). Other, more
sophisticated approaches can be employed as well.
\end{rem}

\begin{rem}
In general, entropy-stable DGODE is more expensive than standard DGODE.
Furthermore, when using the latter, we find that the thermodynamic
entropy produced by the source terms typically outweighs any thermodynamic
entropy destruction caused by the non-entropy-stable mean-value numerical
state, at least for the reaction mechanisms used in this study (though
it is important to note that this observation may not hold true for
significantly different mechanisms). As such, in practice, the following
approach can be employed to maximize efficiency:
\end{rem}

\begin{itemize}
\item Compute a solution with standard DGODE. 
\item If $s\left(y\left(t_{f}\right)\right)\geq s\left(y_{0}\right)$, then
proceed. Otherwise, compute a solution with entropy-stable DGODE.
\end{itemize}
In Section~\ref{subsec:detonation-results-1D}, however, in order
to test the formulation, we calculate spatially one-dimensional detonation
waves with entropy-stable DGODE alone.
\begin{rem}
To reduce computational cost, it is common practice, especially for
large-scale simulations, to approximate a given chemical reaction
as two irreversible forward reactions (as opposed to having forward
and reverse reaction rates). However, this strategy can cause appreciable
entropy violations~\citep{Gio99,Sla11}. A detailed investigation
of whether the gains in speed outweigh the loss in accuracy is outside
the scope of this study. If such a strategy is employed, then reverting
to standard DGODE is the natural course of action. Though not considered
in this work, an alternative approach is to use the least-squares-based
method by Ream et al.~\citep{Rea18} to generate chemical mechanisms
involving irreversible reactions that do not cause entropy violations.
\end{rem}

\section{One-dimensional results\label{sec:results-1D}}

We consider three one-dimensional test cases. The first one involves
the advection of a thermal bubble. The second case is a shock-tube
problem with multiple flow discontinuities. These first two tests
comprise nonreacting multicomponent flows. The final case explores
sustained detonations formed via an overdriven initialization. Stiff
chemical reactions are present in this test. The SSPRK2 time integration
scheme with $\mathrm{CFL}=0.1$ (based on the linear-stability constraint)
is employed throughout. All simulations are performed using a modified
version of the JENRE\textregistered~Multiphysics Framework~\citep{Cor18_SCITECH,Joh20_2}
that incorporates the developments and extensions described in this
work.

\subsection{Thermal bubble\label{subsec:thermal-bubble}}

We use this smooth flow problem to assess the grid convergence of
the entropy-bounded DG method (without artificial viscosity). The
order of accuracy of the limiting procedure in Section~\ref{subsec:limiting-procedure}
is of particular interest. We also compare the local entropy bound
in~\citep{Lv15_2} to that proposed here (Equation~(\ref{eq:local-entropy-bound})).
The initial conditions are as follows:

\begin{eqnarray}
v_{1} & = & 1\textrm{ m/s},\nonumber \\
Y_{H_{2}} & = & \frac{1}{2}\left[1-\tanh\left(|x|-10\right)\right],\nonumber \\
Y_{O_{2}} & = & 1-Y_{H_{2}},\label{eq:thermal-bubble}\\
T & = & 1200-900\tanh\left(|x|-10\right)\textrm{ K},\nonumber \\
P & = & 1\textrm{ bar}.\nonumber 
\end{eqnarray}
The computational domain is $\Omega=[-25,25]\:\mathrm{m}$, with periodic
boundaries. Four element sizes are considered: $h$, $h/2$, $h/4$,
and $h/8$, where $h=2\:\mathrm{m}$. Smaller element sizes are not
investigated since the limiters are not activated for such fine resolutions
and optimal order of accuracy without limiting was already demonstrated
in~\citep{Joh20_2}. The $L^{2}$ error at $t=5\:\mathrm{s}$ is
calculated in terms of the following normalized state variables:
\[
\widehat{\rho v}_{k}=\frac{1}{\sqrt{\rho_{r}P_{r}}}\rho v_{k},\quad\widehat{\rho e}_{t}=\frac{1}{P_{r}}\rho e_{t},\quad\widehat{C}_{i}=\frac{R^{0}T_{r}}{P_{r}}C_{i},
\]
where $T_{r}=1000\,\mathrm{K}$, $\rho_{r}=1\,\mathrm{kg\cdot}\mathrm{m}^{-3}$,
and $P_{r}=101325\,\mathrm{Pa}$ are reference values. The results
of the convergence study for $p=1$ to $p=3$ are displayed in Figure~\ref{fig:thermal_bubble_convergence}.
The dashed lines represent the theoretical convergence rates. The
``$\times$'' marker indicates that the positivity-preserving limiter
is activated, the ``$\Circle$'' marker indicates that the entropy
limiter is activated, and the ``$\triangle$'' marker indicates
that neither limiter is activated. If both limiters are activated,
then the corresponding markers are superimposed as ``$\otimes$''.
For $h$ and $h/2$, both the positivity-preserving and entropy limiters
are engaged regardless of $p$; for $h/4$ and $p=1$, only the positivity-preserving
limiter is engaged. That the limiters are no longer activated for
well-resolved solutions is a desirable property. In general, optimal
order of accuracy is recovered. Suboptimal accuracy is observed for
the coarser resolutions with $p=1$, likely because the asymptotic
regime is not yet reached. 
\begin{figure}[H]
\begin{centering}
\includegraphics[width=0.6\columnwidth]{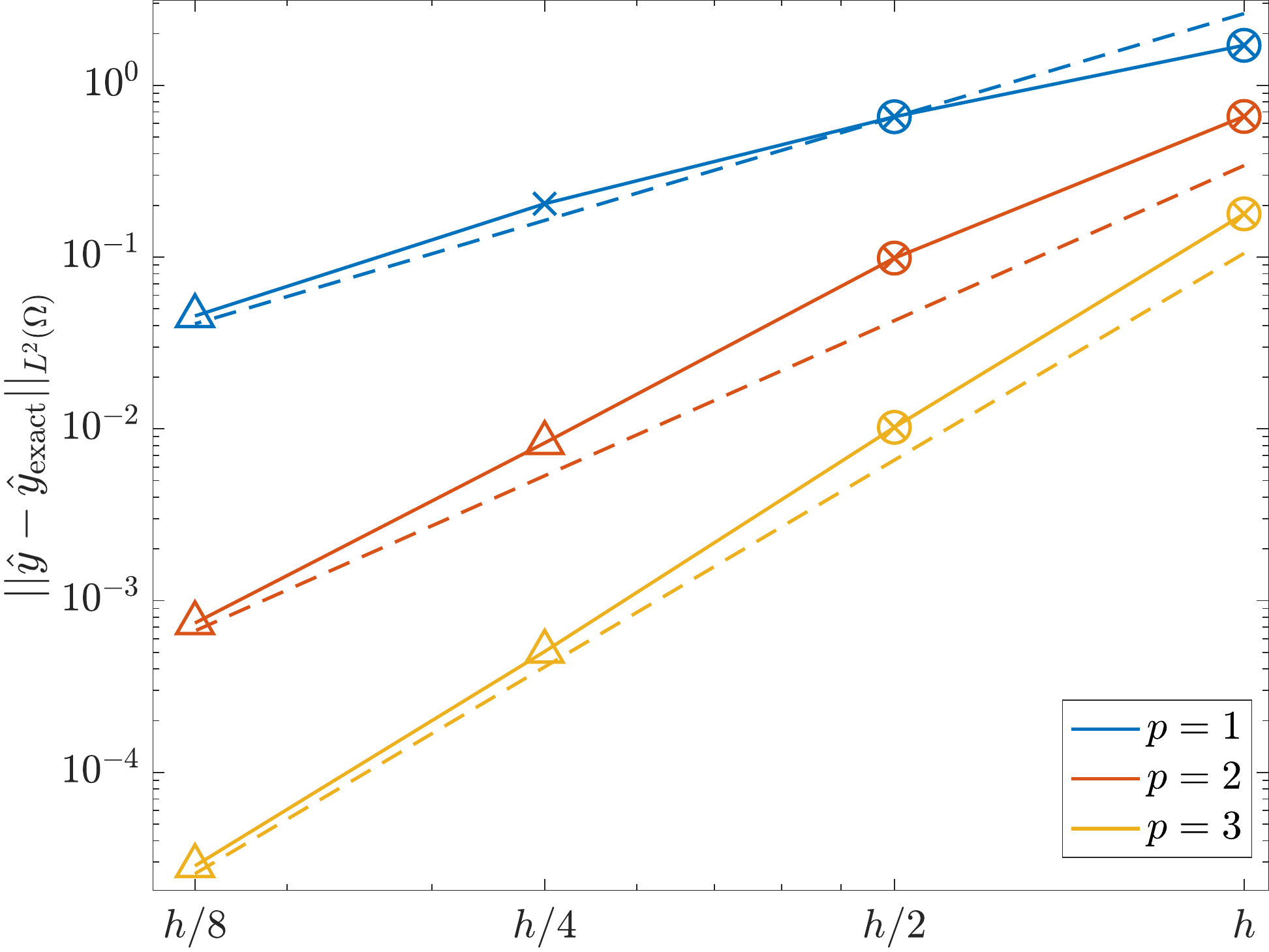}
\par\end{centering}
\caption{\label{fig:thermal_bubble_convergence}Convergence under grid refinement,
with $h=2\:\mathrm{m}$, for the one-dimensional thermal bubble test
case. The $L^{2}$ error of the normalized state with respect to the
exact solution at $t=5\:\mathrm{s}$ is computed. The dashed lines
represent the theroretical convergence rates. The ``$\times$''
marker indicates that the positivity-preserving limiter is activated,
the ``$\Circle$'' marker indicates that the entropy limiter is
activated, and the ``$\triangle$'' marker indicates that neither
limiter is activated. If both limiters are activated, then the corresponding
markers are superimposed as ``$\otimes$''.}
\end{figure}

Figure~\ref{fig:thermal_bubble_T_local_EB} compares the temperature
profile at $t=50\:s$ computed with the local entropy bound in~\citep{Lv15_2}
(``Old'') to that computed with the local entropy bound in Equation~(\ref{eq:local-entropy-bound})
(``New''). The exact solution is the same as the initial condition.
The element size is $h/4$ and the polynomial order is $p=1$. The
reason for choosing $p=1$ is to eliminate any ambiguity associated
with algebraically estimating the spatial minimum of the specific
thermodynamic entropy, as discussed in Section~\ref{sec:entropy-bound}.
Though oscillations are present in both cases, they are significantly
larger with the local entropy bound in~\citep{Lv15_2}, suggesting
that it may be overly restrictive. The local Lax-Friedrichs flux is
employed in this comparison. With the selected parameters, the differences
between the two local entropy bounds are more pronounced for this
numerical flux than the HLLC flux. Nevertheless, such discrepancies
are observed for the latter flux function in other configurations
as well, especially with higher polynomial orders.
\begin{figure}[H]
\begin{centering}
\includegraphics[width=0.6\columnwidth]{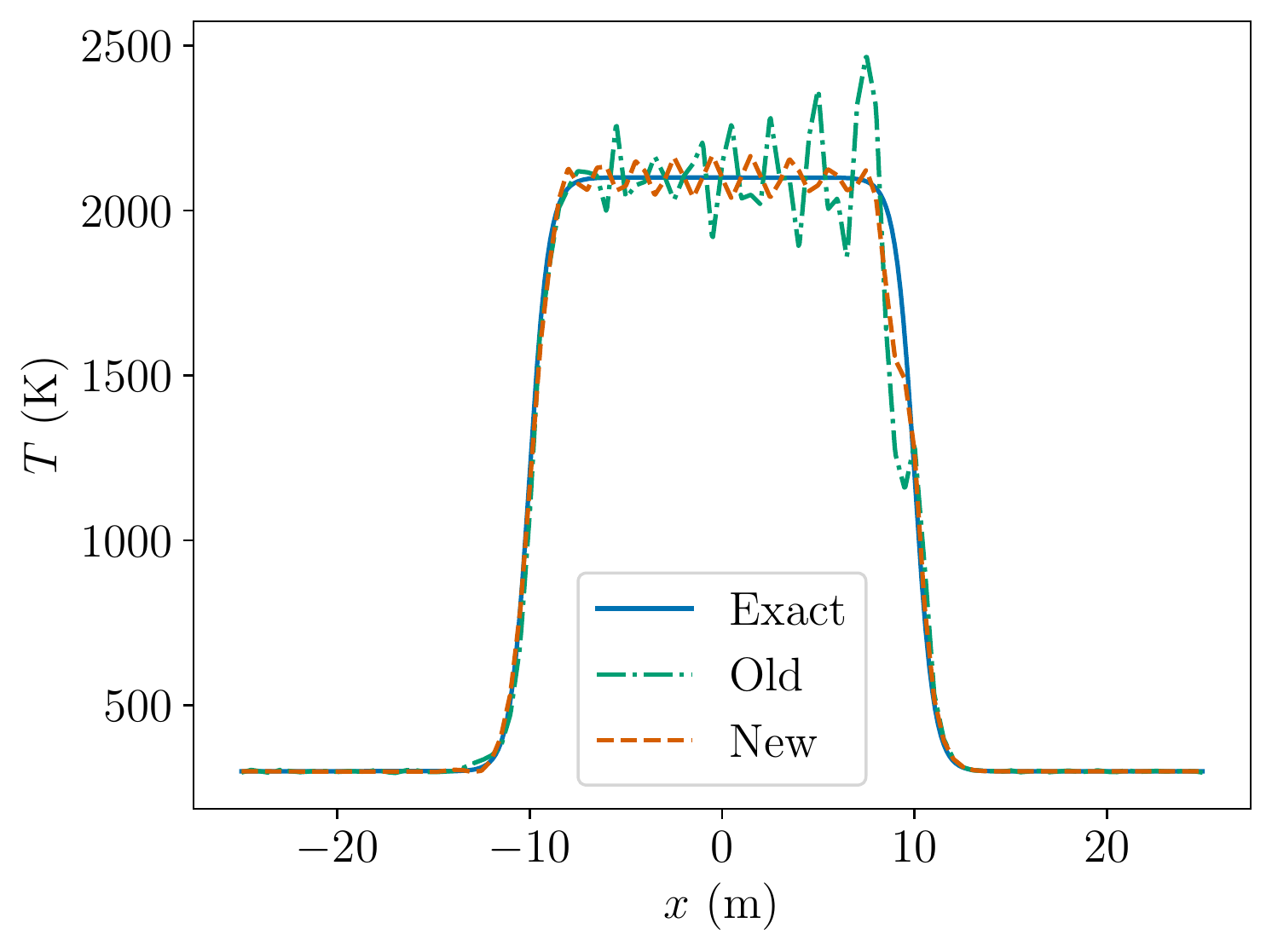}
\par\end{centering}
\caption{\label{fig:thermal_bubble_T_local_EB}Temperature profiles at $t=50\:s$
computed with the local entropy bound in~\citep{Lv15_2} (``Old'')
and the local entropy bound in Equation~(\ref{eq:local-entropy-bound})
(``New''). The exact solution is the same as the initial condition.
The element size is $h/4$ and the polynomial order is $p=1$.}
\end{figure}

\subsection{Shock tube\label{subsec:shock-tube}}

This test case was first presented by Houim and Kuo~\citep{Hou11}
and computed as well by Johnson and Kercher~\citep{Joh20_2}. The
goals here are to compare the stabilization properties of the limiters
and to illustrate the benefits of the local entropy bound. The initial
conditions are given by
\begin{equation}
\left(v_{1},T,P,Y_{N_{2}},Y_{He}\right)=\begin{cases}
\left(0\text{ m/s},300\text{ K},1\text{ atm},1,0\right), & x\geq0.4\\
\left(0\text{ m/s},300\text{ K},10\text{ atm},0,1\right), & x<0.4
\end{cases}.\label{eq:shock-tube-IC-Houim}
\end{equation}
The computational domain is $\Omega=[0,1]\text{ m}$, with walls at
both ends. Figure~\ref{fig:shock_tube_Houim} shows the mass fraction,
pressure, temperature, and entropy profiles at $t=300\;\mu\mathrm{s}$
for $p=3$ and 200 elements. We present results for only the positivity-preserving
limiter (referred to as ``PPL'') and for both the positivity-preserving
and entropy limiters with the local entropy bound in Equation~(\ref{eq:local-entropy-bound})
(referred to as ``local EB''). Also included is a reference solution
computed with $p=2$, 2000 elements, and artificial viscosity, which
corresponds to the configuration in~\citep{Joh20_2}. Artificial
viscosity is not employed in the coarser cases in order to isolate
the effects of the limiters. The mass fraction profiles are well-captured
for both types of limiting. However, although the solver does not
crash, the positivity-preserving limiter by itself fails to suppress
large-scale oscillations and significant overshoots/undershoots in
the pressure and temperature distributions. While instabilities are
still present with the local entropy limiter, they are of substantially
smaller magnitude. The entropy distribution obtained with the local
entropy limiter is very similar to that of the reference solution,
whereas the positivity-preserving limiter generates a notable overshoot
and undershoot at the shock. For the given flow conditions, the global
entropy bound yields very similar results (not shown for brevity)
to the local entropy bound since the specific thermodynamic entropy
in the vicinity of the shock, which is where much of the limiting
occurs, is close to the global minimum. 

The instabilities observed in the ``PPL'' case in Figures~\ref{fig:shock_tube_P_Houim}
and~\ref{fig:shock_tube_T_Houim} are considerably larger than those
typically observed in shock-tube solutions obtained with the positivity-preserving
limiter in the monocomponent, calorically perfect case~\citep{Zha10,Zha12_2,Zha17}.
This difference reflects the numerical challenges associated with
not only multicomponent mixtures, but also variable thermodynamics.
In a similar vein, the relative benefit of the entropy limiter (compared
to the positivity-preserving limiter) seems significantly greater
in the multicomponent, thermally perfect case than in the monocomponent,
calorically perfect case. 

\begin{figure}[H]
\subfloat[\label{fig:shock_tube_Y_Houim}Mass fractions.]{\includegraphics[width=0.45\columnwidth]{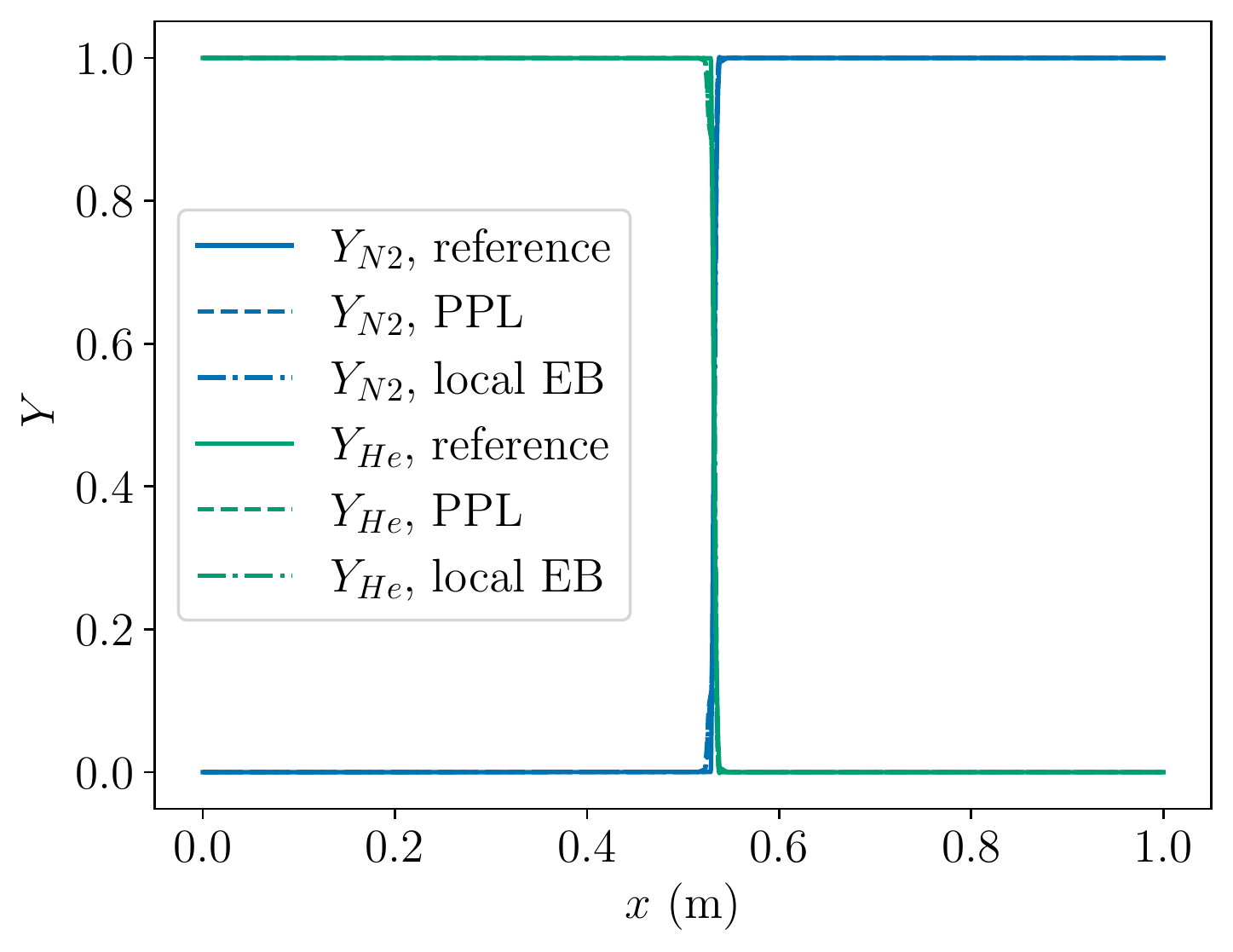}}\hfill{}\subfloat[\label{fig:shock_tube_P_Houim}Pressure.]{\includegraphics[width=0.45\columnwidth]{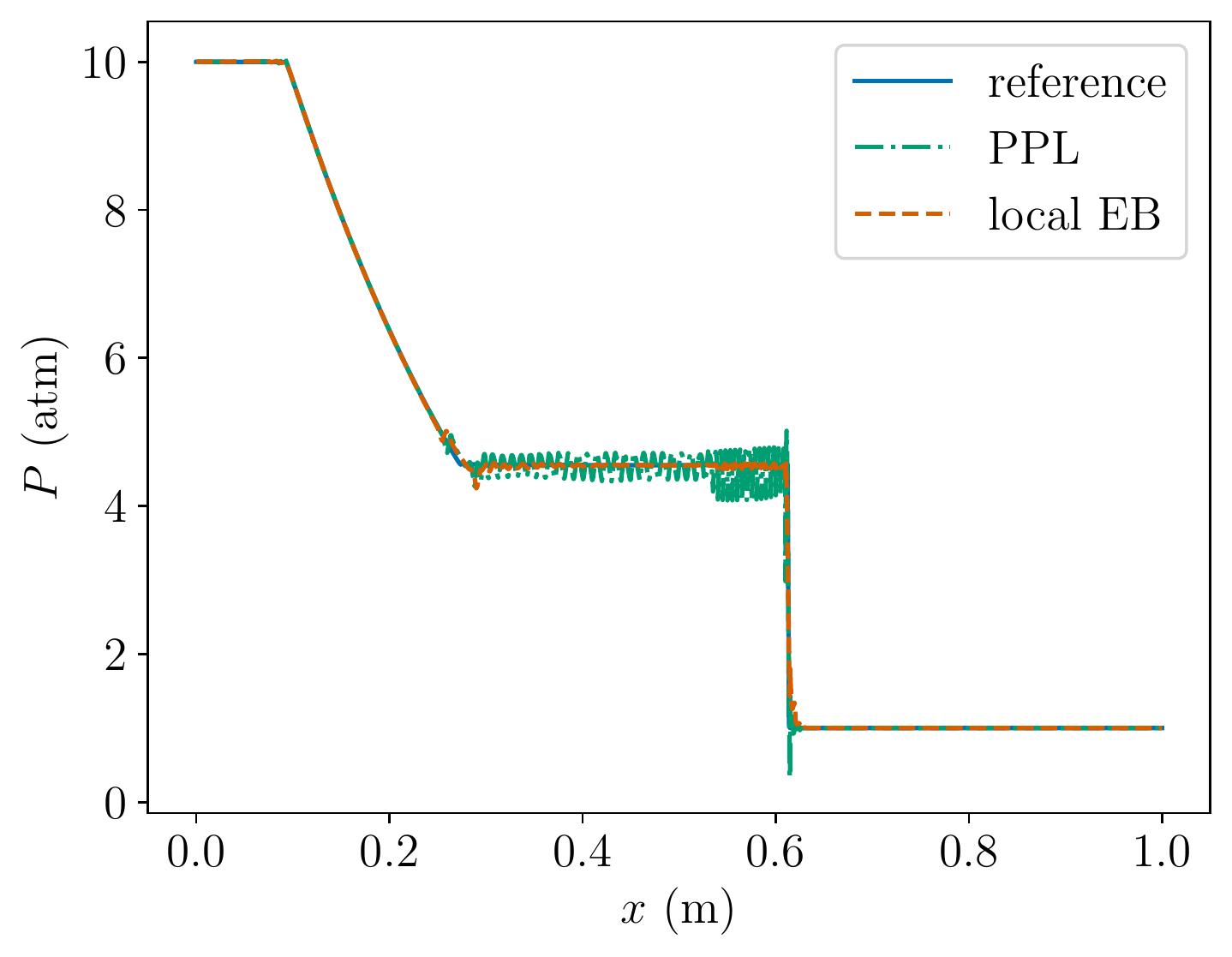}}\hfill{}\subfloat[\label{fig:shock_tube_T_Houim}Temperature.]{\includegraphics[width=0.48\columnwidth]{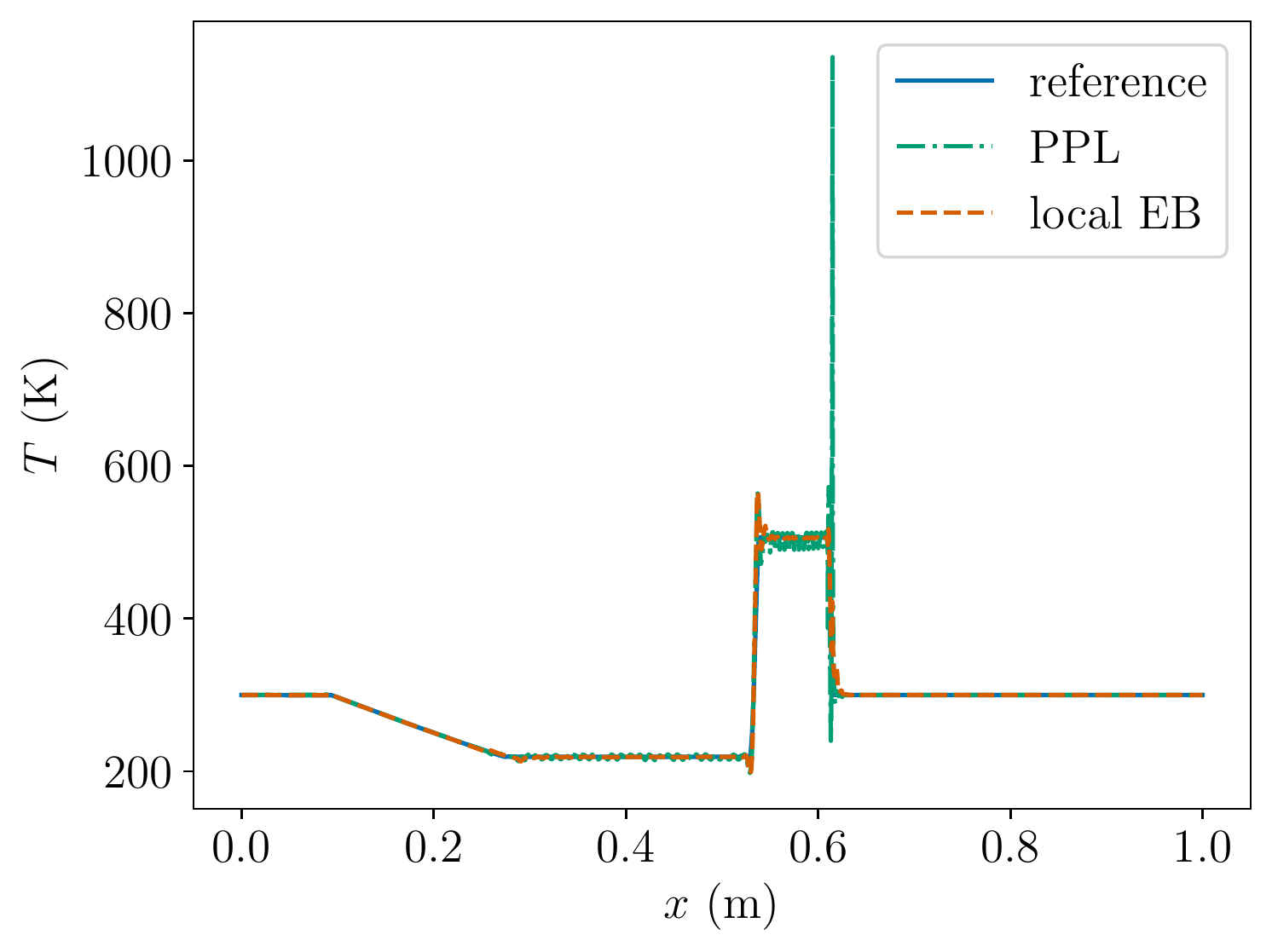}}\hfill{}\subfloat[\label{fig:shock_tube_s_Houim}Specific thermodynamic entropy.]{\includegraphics[width=0.45\columnwidth]{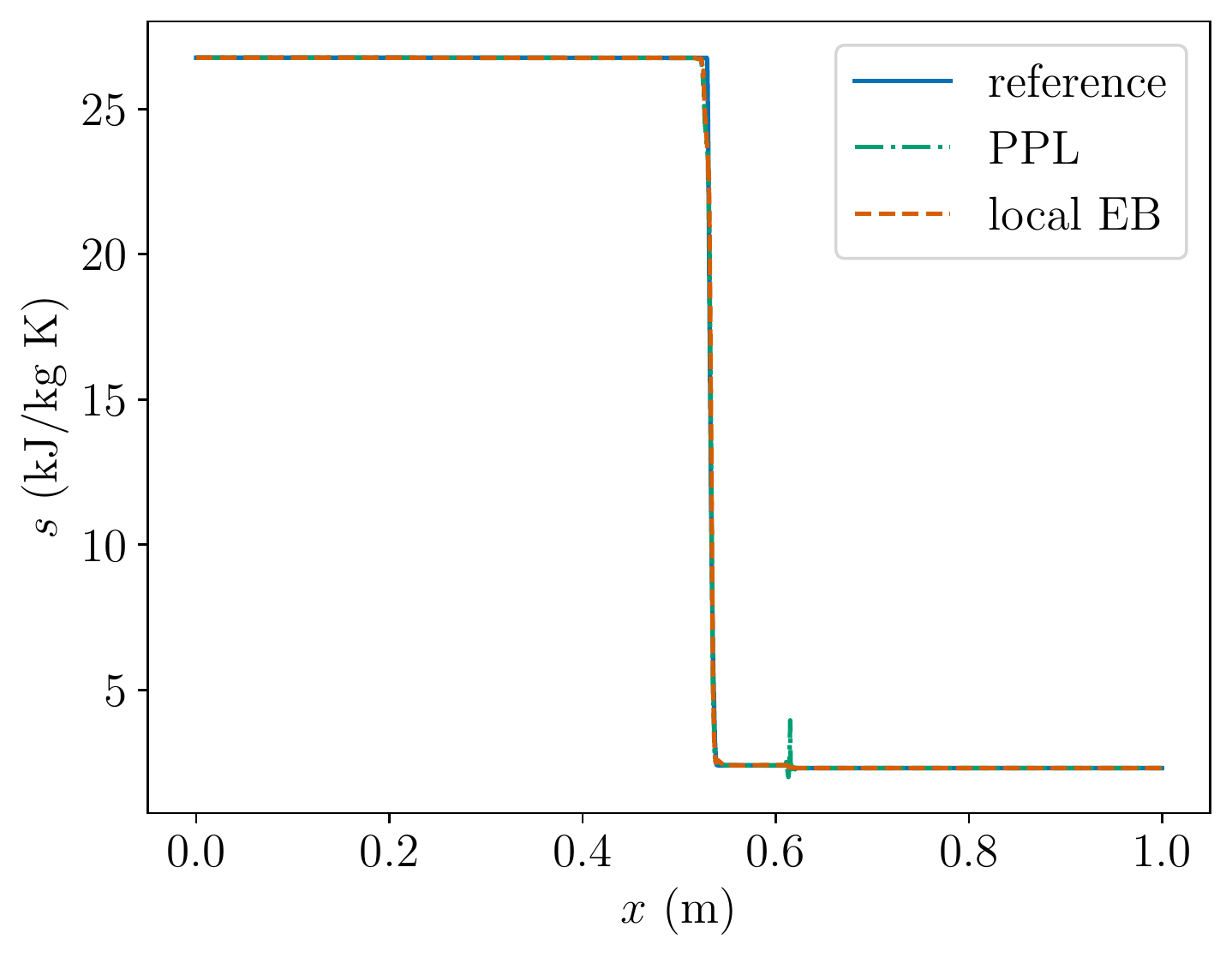}}

\caption{\label{fig:shock_tube_Houim} Results for $p=3$ solutions on 200
elements without artificial viscosity for the one-dimensional, multicomponent
shock-tube problem with initialization in Equation~(\ref{eq:shock-tube-IC-Houim}).
``PPL'' corresponds to the positivity-preserving limiter by itself,
and ``local EB'' refers to both the positivity-preserving and entropy
limiters with the local entropy bound in Equation~(\ref{eq:local-entropy-bound}).
The reference solution~\citep{Joh20_2} is computed with $p=2$,
2000 elements, and artificial viscosity.}
\end{figure}

Figure~\ref{fig:shock_tube_conservation_error_Houim} presents the
percent error in conservation of mass, energy, and atomic elements
for the ``local EB'' case as a representative example, calculated
every $0.3\;\mu\mathrm{s}$ (for a total of 1000 samples). $\mathsf{N}_{N}$
and $\mathsf{N}_{He}$ denote the total numbers of nitrogen and helium
atoms in the mixture. The error remains close to machine precision,
confirming that the proposed formulation is conservative. Also included
is the error in mass conservation (calculated every time step) for
a solution computed without the positivity-preserving and entropy
limiters, but instead with a simple clipping procedure in which negative
species concentrations are set to zero, a strategy employed by many
reacting-flow solvers. The error increases rapidly to non-negligible
values until the solver diverges.

\begin{figure}[H]
\begin{centering}
\includegraphics[width=0.6\columnwidth]{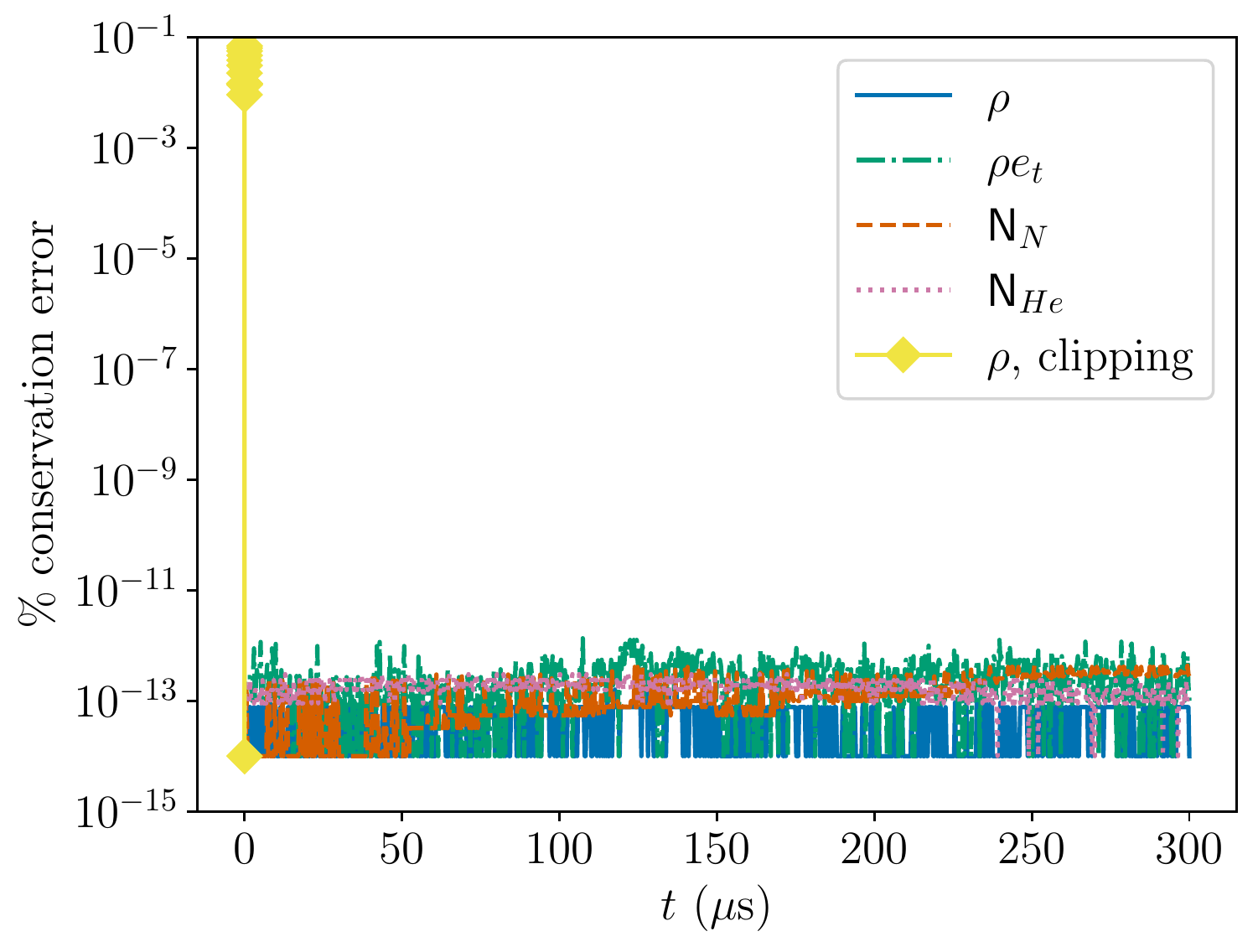}
\par\end{centering}
\caption{\label{fig:shock_tube_conservation_error_Houim}Percent error in conservation
of mass, energy, and atomic elements for the ``local EB'' case in
Figure~\ref{fig:shock_tube_Houim}, computed with $p=3$ on 200 elements.
The initial conditions for this one-dimensional, multicomponent shock-tube
problem are given in Equation~(\ref{eq:shock-tube-IC-Houim}). Also
included is the error in mass conservation for a solution computed
without the positivity-preserving and entropy limiters, but instead
with a simple clipping procedure in which negative species concentrations
are set to zero.}
\end{figure}

Next, we recompute this problem with artificial viscosity to confirm
adequate suppression of small-scale oscillations. Figure~\ref{fig:shock_tube_AV_Houim}
presents the results for $C_{\mathrm{AV}}=1$. The instabilities observed
in Figure~\ref{fig:shock_tube_Houim} are largely eliminated by the
artificial viscosity. As shown in Figure~\ref{fig:shock_tube_T_AV_Houim},
a temperature undershoot at the shock emerges when only the positivity-preserving
limiter is used, but is suppressed by the entropy limiter. The artificial
viscosity causes some smearing of the solution at the contact. Note
that without any limiting, negative species concentrations occur for
the $p=3$, 200-element cases, even with artificial viscosity. As
will be further discussed in Part II~\citep{Chi22_2}, artificial
viscosity alone, or even when combined with solely the positivity-preserving
limiter, does not provide sufficient stabilization in simulations
of complex detonation waves on relatively coarse meshes. In these
simulations, enforcement of the entropy principle is critical for
robustness.

\begin{figure}
\subfloat[\label{fig:shock_tube_Y_AV_Houim}Mass fractions.]{\includegraphics[width=0.45\columnwidth]{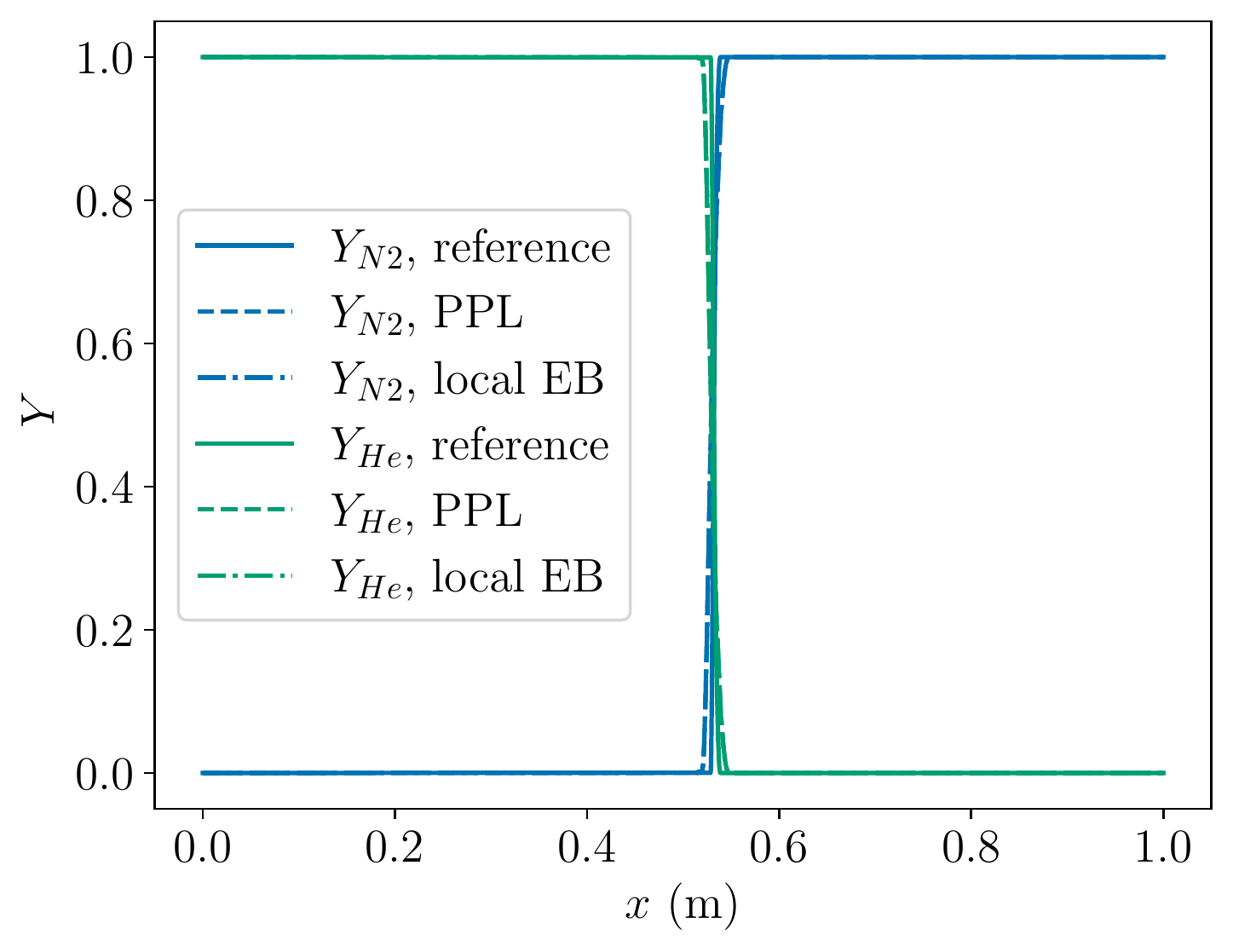}}\hfill{}\subfloat[\label{fig:shock_tube_P_AV_Houim}Pressure.]{\includegraphics[width=0.45\columnwidth]{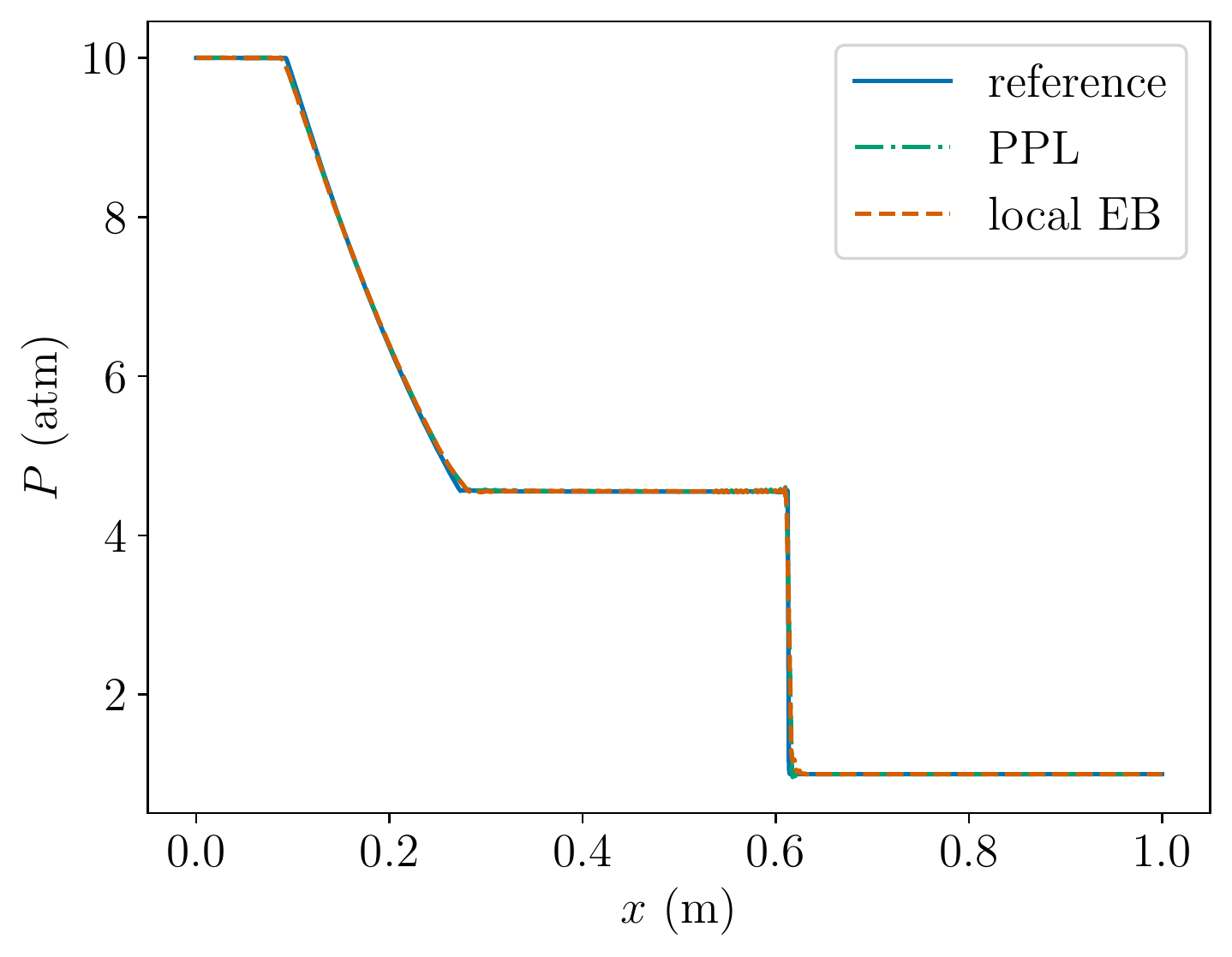}}\hfill{}\subfloat[\label{fig:shock_tube_T_AV_Houim}Temperature.]{\includegraphics[width=0.46\columnwidth]{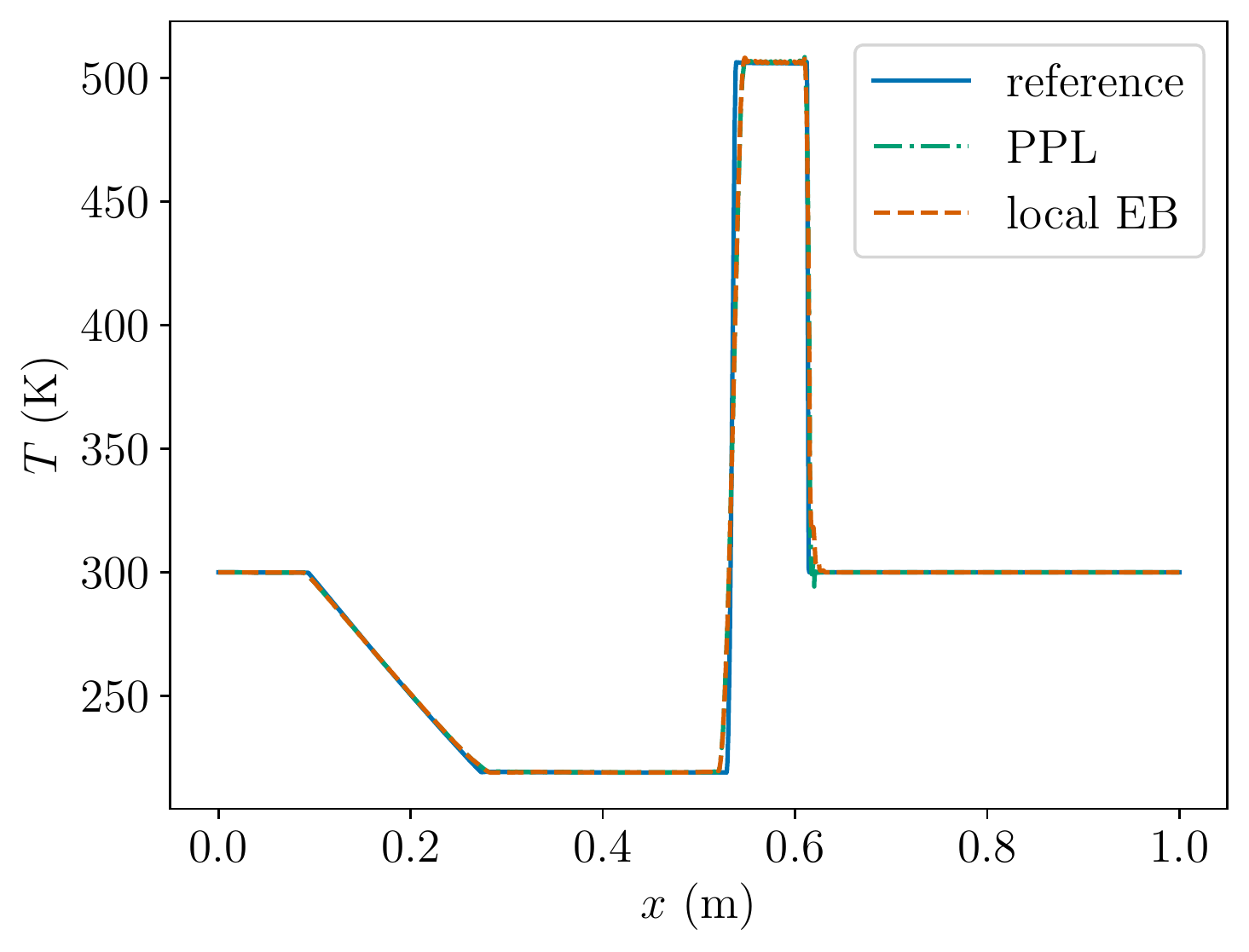}}\hfill{}\subfloat[\label{fig:shock_tube_s_AV_Houim}Specfic thermodynamic entropy.]{\includegraphics[width=0.45\columnwidth]{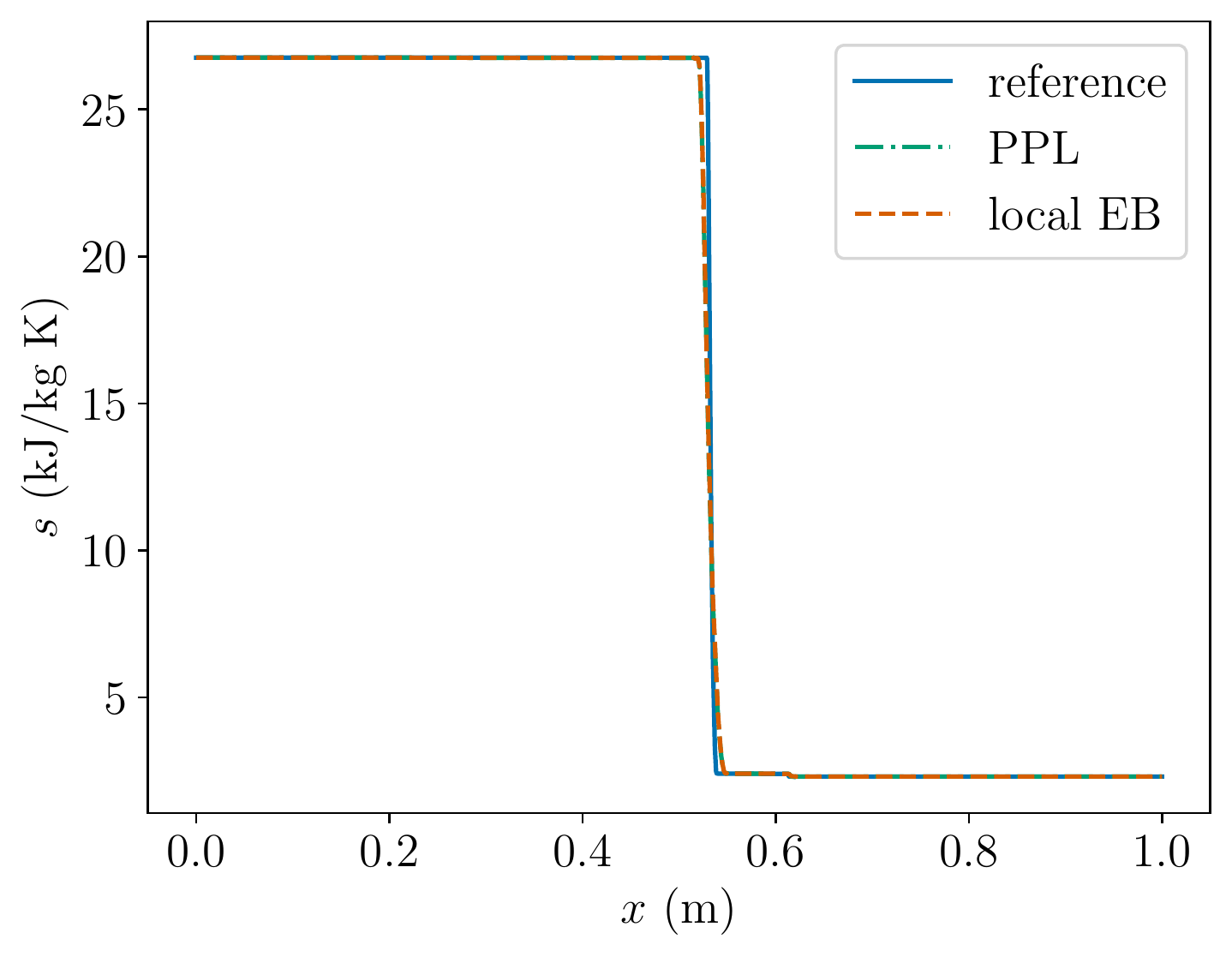}}

\caption{\label{fig:shock_tube_AV_Houim} Results for $p=3$ solutions on 200
elements with artificial viscosity for the one-dimensional, multicomponent
shock-tube problem with initialization in Equation~(\ref{eq:shock-tube-IC-Houim}).
``PPL'' corresponds to the positivity-preserving limiter by itself,
and ``local EB'' refers to both the positivity-preserving and entropy
limiters with the local entropy bound in Equation~(\ref{eq:local-entropy-bound}).
The reference solution~\citep{Joh20_2} is computed with $p=2$,
2000 elements, and artificial viscosity. The difference between these
results and those in Figure~\ref{fig:shock_tube_Houim} is the use
of artificial viscosity in the (non-reference) solutions here.}
\end{figure}

To highlight discrepancies between the local and global entropy bounds,
we consider different initial conditions, given by
\begin{equation}
\left(v_{1},T,P,Y_{N_{2}},Y_{He}\right)=\begin{cases}
\left(0\text{ m/s},300\text{ K},1\text{ atm},0,1\right), & x\geq0.4\\
\left(0\text{ m/s},300\text{ K},10\text{ atm},1,0\right), & x<0.4
\end{cases}.\label{eq:shock-tube-IC-other}
\end{equation}
The only difference with the previous initial conditions is in the
mass fractions. Displayed in Figure~\ref{fig:shock_tube_other} are
the results obtained with the global entropy bound (referred to as
``global EB'') and the local entropy bound (again referred to as
``local EB''). These $p=2$ solutions are computed on 200 elements
without any artificial viscosity. The positivity-preserving limiter
by itself yields very similar results (not shown for brevity) to the
entropy limiter with the global entropy bound. The reference solution
is again computed with $p=2$, 2000 elements, and artificial viscosity.
The mass fractions are well-captured in all cases. The differences
between the ``global EB'' and ``local EB'' solutions here are
not as large as those between the ``PPL'' and ``local EB'' solutions
in Figure~\ref{fig:shock_tube_Houim}. %
Nevertheless, the benefit of the local entropy bound is evident, specifically
at the shock. Spurious artifacts in the pressure profile and especially
the temperature profile near the shock are noticeably larger for the
global entropy bound. The discrepancies between the two solutions
are attributed to the considerable difference between the specific
thermodynamic entropy in the vicinity of the shock and the global
minimum, as illustrated in Figure~\ref{fig:shock_tube_s_other}.
As such, the local entropy bound is particularly beneficial in flow
problems with large variations in the specific thermodynamic entropy
throughout the domain.

\begin{figure}
\subfloat[\label{fig:shock_tube_Y_other}Mass fractions.]{\includegraphics[width=0.45\columnwidth]{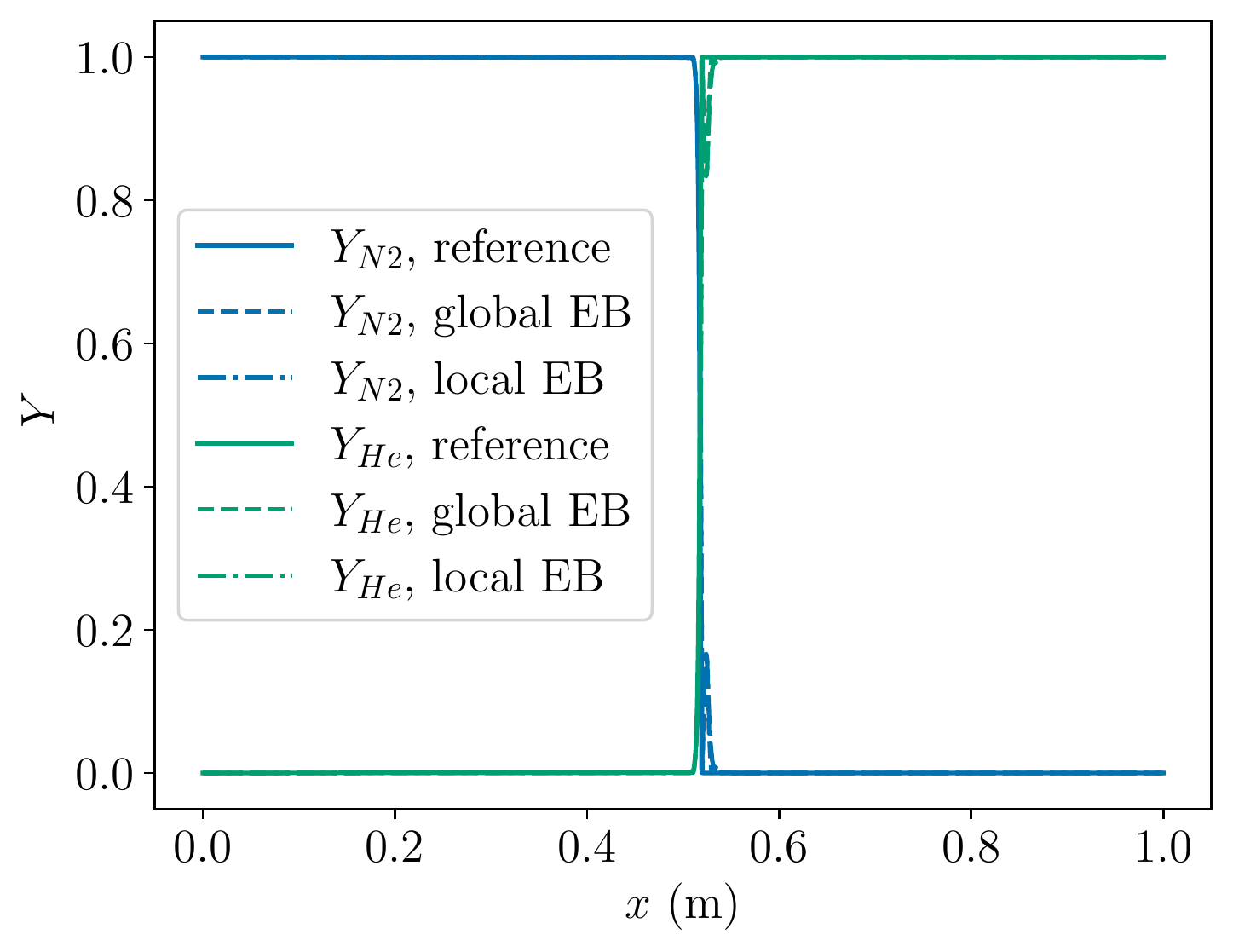}}\hfill{}\subfloat[\label{fig:shock_tube_P_other}Pressure.]{\includegraphics[width=0.45\columnwidth]{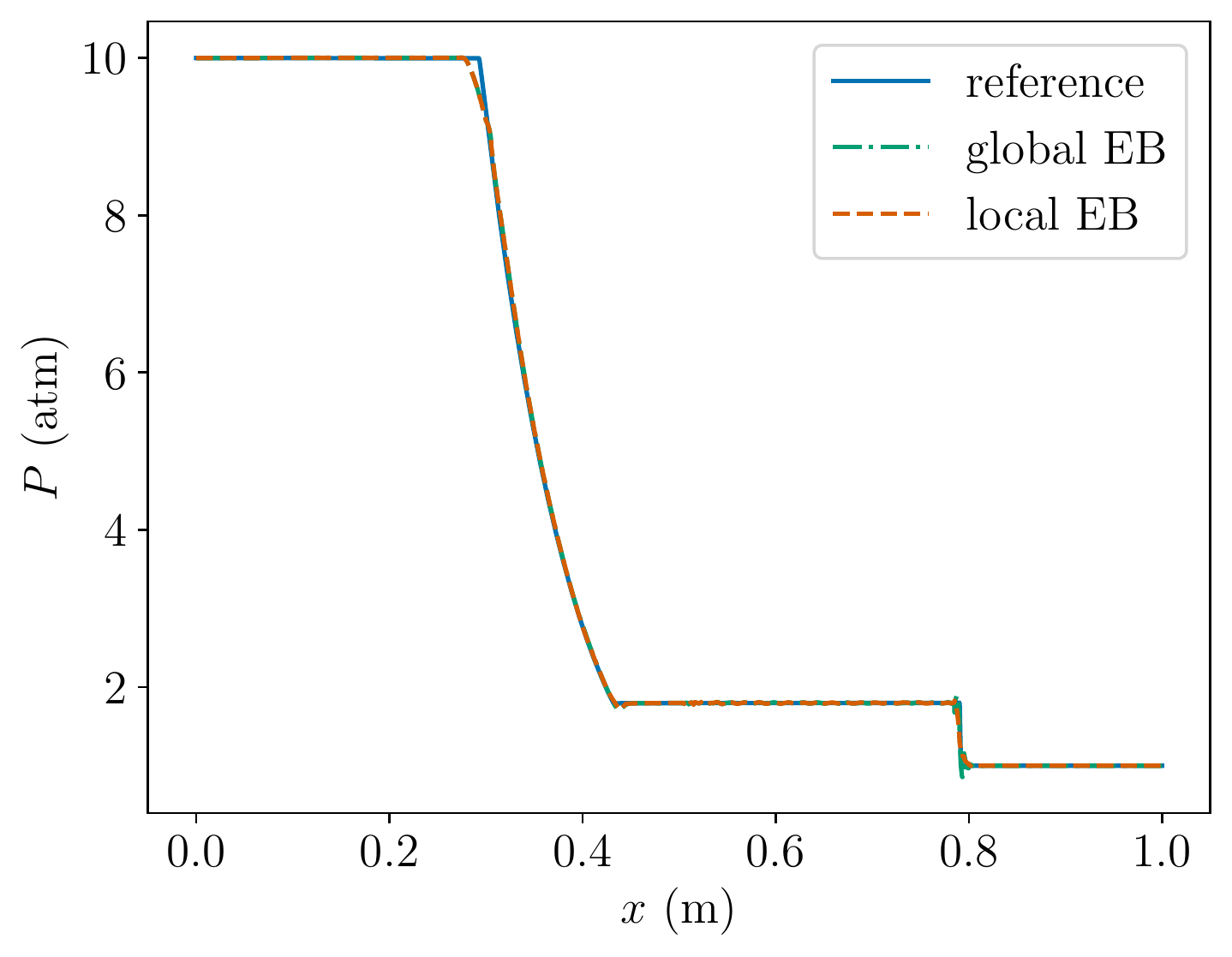}}\hfill{}\subfloat[\label{fig:shock_tube_T_other}Temperature.]{\includegraphics[width=0.46\columnwidth]{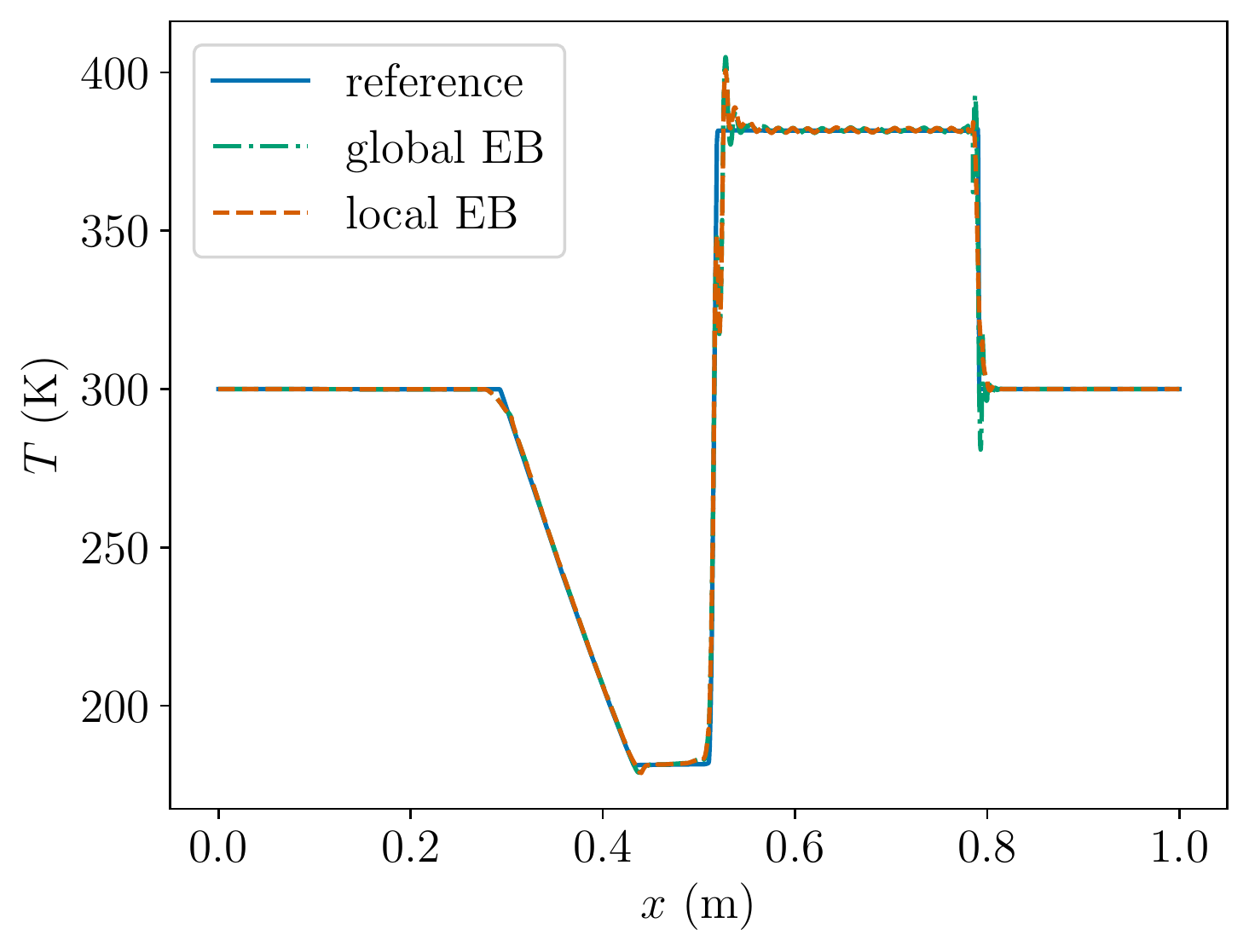}}\hfill{}\subfloat[\label{fig:shock_tube_s_other}Specfic thermodynamic entropy.]{\includegraphics[width=0.45\columnwidth]{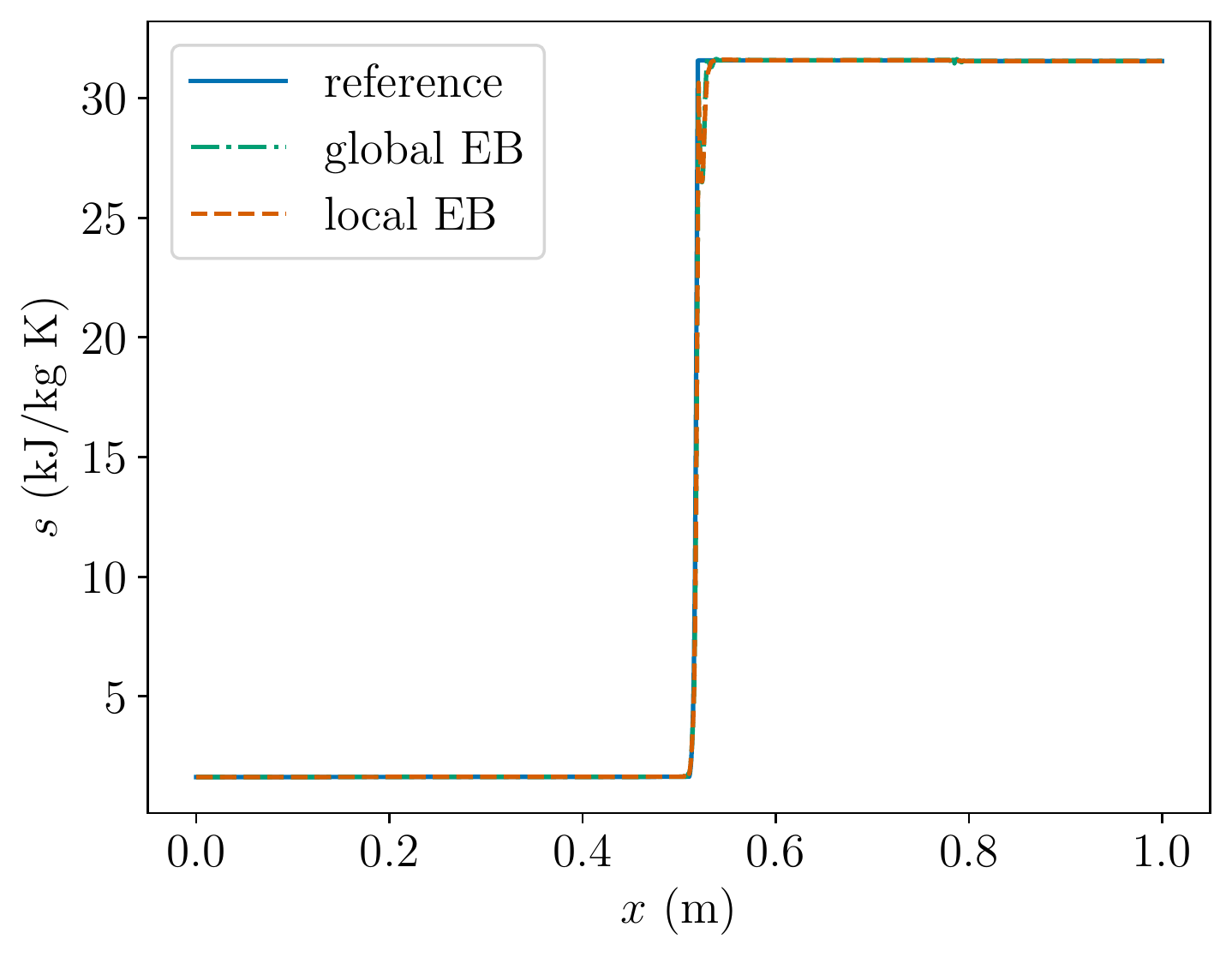}}

\caption{\label{fig:shock_tube_other} Results for $p=2$ solutions on 200
elements without artificial viscosity for the one-dimensional, multicomponent
shock-tube problem with initialization in Equation~(\ref{eq:shock-tube-IC-other}).
``Global EB'' refers to the entropy limiter with the global entropy
bound in Equation~(\ref{eq:global-entropy-bound}), and ``local
EB'' refers to the entropy limiter with the local entropy bound in
Equation~(\ref{eq:local-entropy-bound}). The reference solution
is computed with $p=2$, 2000 elements, and artificial viscosity.}
\end{figure}

\subsection{Detonation wave~\label{subsec:detonation-results-1D}}

The final one-dimensional test case is a hydrogen-oxygen detonation
wave diluted in Argon with initial conditions

\begin{equation}
\begin{array}{cccc}
\qquad\qquad\qquad\qquad v_{1} & = & 0\text{ m/s},\\
\quad\quad\quad\;X_{Ar}:X_{O_{2}}:X_{H_{2}} & = & 7:1:2 & \text{ }x>0.025\text{ m},\\
X_{Ar}:X_{O_{2}}:X_{H_{2}}:X_{OH} & = & 7:1:2:0.01 & \text{ }0.015\text{ m}<x<0.025\text{ m,}\\
\quad\quad\;X_{Ar}:X_{H_{2}O}:X_{OH} & = & 8:2:0.0001 & x<0.015\text{ m},\\
\qquad\qquad\qquad\qquad P & = & \begin{cases}
\expnumber{5.50}5 & \text{ Pa}\\
\expnumber{6.67}3 & \text{ Pa}
\end{cases} & \begin{array}{c}
x<0.015\text{ m}\\
x>0.015\text{ m}
\end{array},\\
\qquad\qquad\qquad\qquad T & = & \begin{cases}
300 & \text{ K}\\
350 & \text{ K}\\
3500\text{\hspace{1em}\hspace{1em}} & \text{ K}
\end{cases} & \begin{array}{c}
\text{ }x>0.025\text{ m}\\
\text{ }0.015\text{ m}<x<0.025\text{ m}\\
x<0.015\text{ m}
\end{array}.
\end{array}\label{eq:detonation-1d-initialization}
\end{equation}
The domain is $\Omega=(0,0.45)$ m, with walls at the left and right
boundaries. In previous work~\citep{Joh20_2}, this case was computed
with $p=1$ and mesh spacing $h=9\times10^{-5}$ m. At this resolution,
artificial viscosity was the only stabilization necessary to obtain
an accurate solution while maintaining conservation of mass and energy.
Good agreement with the Shock and Detonation Toolbox~\citep{sdtoolbox}
was observed. Additional details can be found in~\citep{Joh20_2}.
Here, our objective is to demonstrate that the proposed formulation,
specifically the entropy-bounded DG discretization of the convective
operator combined with entropy-stable DGODE for stiff chemical reactions,
can compute stable and accurate solutions with appreciably lower resolution.
In light of~\citep{Joh20_2}, we use a $p=1$, $h=9\times10^{-5}$
calculation as a reference solution. The only difference with~\citep{Joh20_2}
is that the reaction mechanism~\citep{Wes82} is slightly modified
to solely contain reversible reactions%
.

Figure~\ref{fig:1D_detonation} presents $p=1$, $p=2$, and $p=3$
results at $t=235$~$\mu\mathrm{s}$. The mesh spacing in these simulations
is $h=4.5\times10^{-4}$ m, which is fives time larger than for the
reference solution. Artificial viscosity and the local entropy limiter
are employed. At this mesh spacing, artificial viscosity alone is
not sufficient to stabilize the solution. Good agreement in temperature
and pressure with the reference solution is observed. As shown in
Figure~\ref{fig:1D_detonation_T_zoom}, which zooms in on the shock,
there are slight discrepancies in the predictions of the leading-shock-front
location and the induction region. However, these predictions improve
with increasing $p$. Also included in Figure~\ref{fig:1D_detonation_T_zoom}
is a $p=1$ solution with standard DGODE in order to ensure that entropy-stable
DGODE is not the primary cause of the disagreement. Figure~\ref{fig:1D_detonation_s}
shows small-scale entropy oscillations behind the leading shock front
in the $p=3$ calculation. The presence of these instabilities is
not too surprising given the linear nature of the limiting operator,
coupled with the increased susceptibility to oscillations of high
polynomial orders. Nevertheless, these instabilities do not significantly
pollute the rest of the solution. 

\begin{figure}
\subfloat[\label{fig:1D_detonation_T}Temperature.]{\includegraphics[width=0.45\columnwidth]{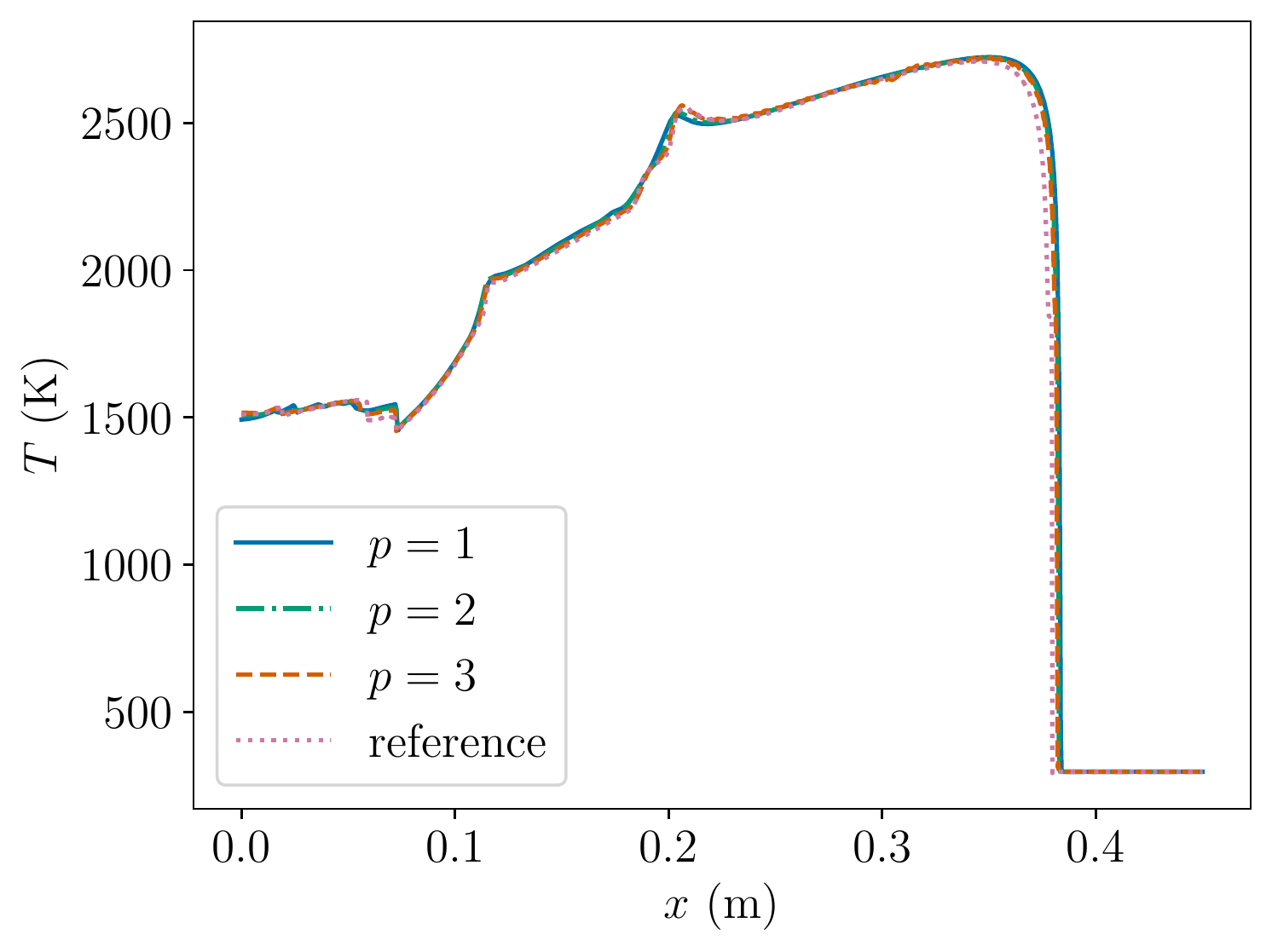}}\hfill{}\subfloat[\label{fig:1D_detonation_T_zoom}Temperature, zoomed in on shock.]{\includegraphics[width=0.45\columnwidth]{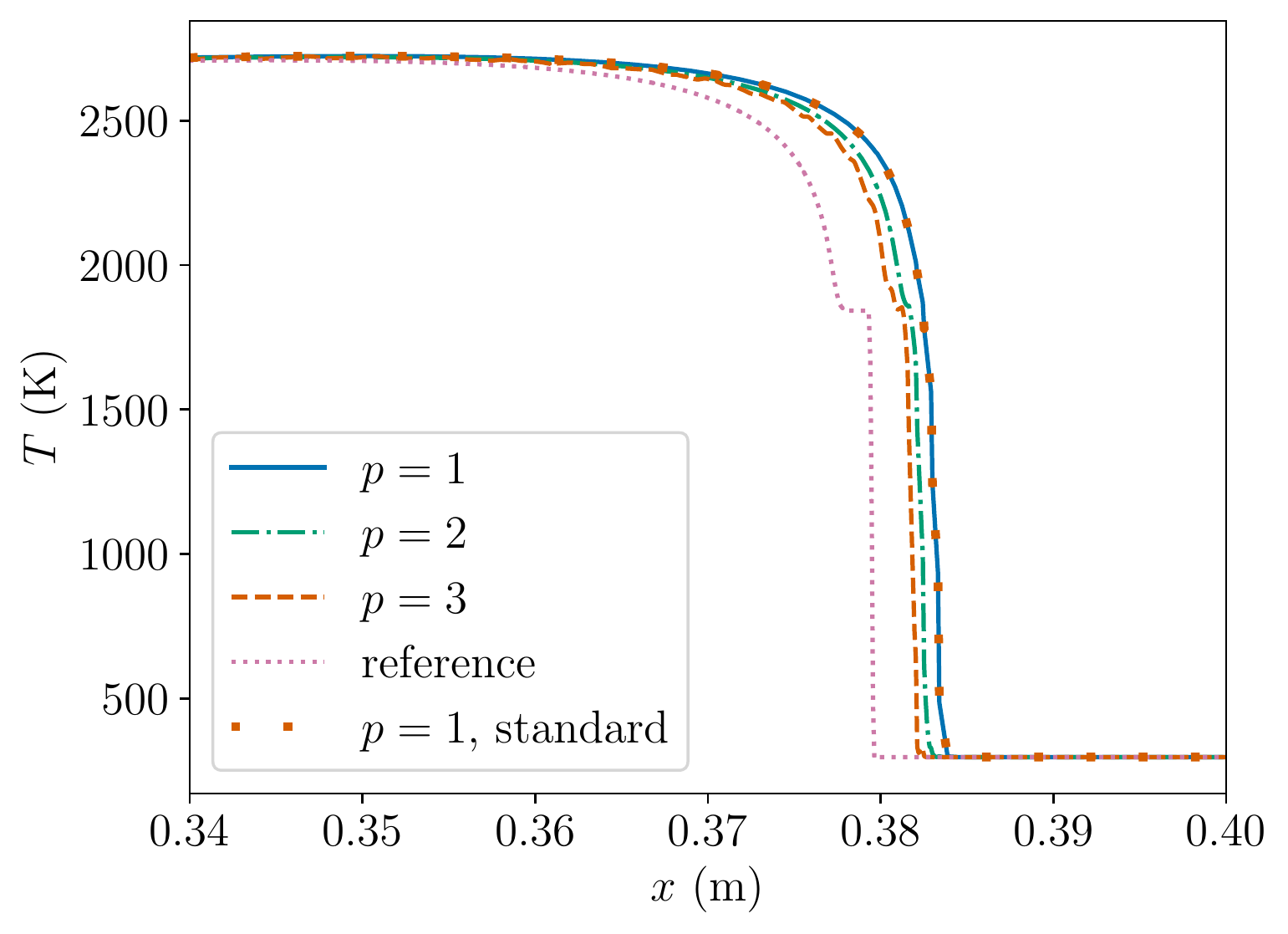}}\hfill{}\subfloat[\label{fig:1D_detonation_P}Pressure.]{\includegraphics[width=0.45\columnwidth]{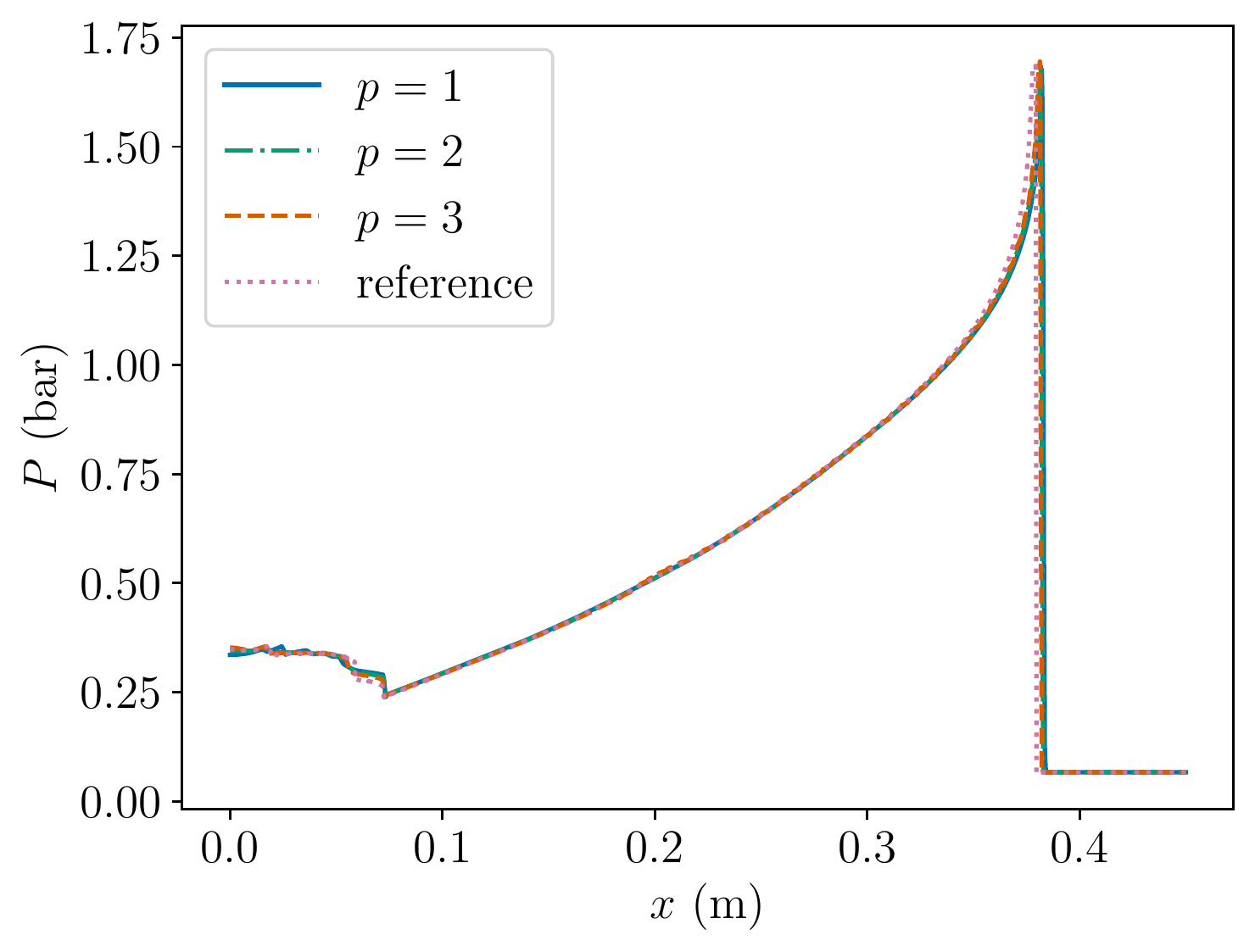}}\hfill{}\subfloat[\label{fig:1D_detonation_s}Specific thermodynamic entropy.]{\includegraphics[width=0.45\columnwidth]{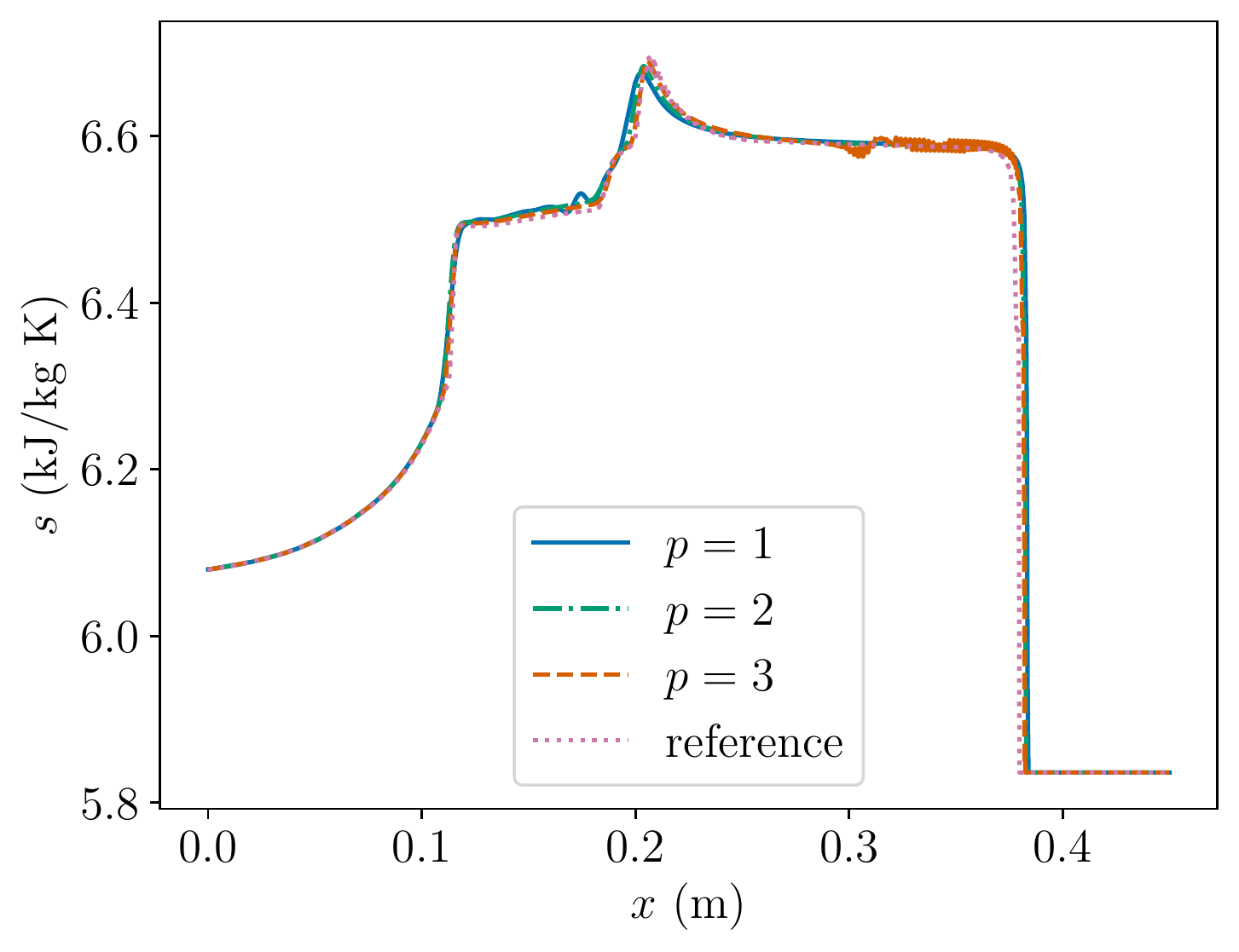}}

\caption{\label{fig:1D_detonation} $p=1$, $p=2$, and $p=3$ results at $t=235$~$\mu\mathrm{s}$
for the one-dimensional hydrogen detonation test case. The mesh spacing
in these simulations is $h=4.5\times10^{-4}$ m, which is fives time
larger than for the reference solution. Artificial viscosity and the
local entropy limiter are employed.}
\end{figure}

Figure~\ref{fig:1D_detonation_mass_fractions} compares mass-fraction
profiles of selected species obtained with $p=3$ to those of the
reference solution. Marginal oscillations can be observed behind the
shock, particularly in the mass-fraction profiles of $\mathrm{H_{2}O}$
and $\mathrm{HO_{2}}$. In Figure~\ref{fig:1D_detonation_mass_fractions_3},
the $p=3$ solution predicts slightly lower peaks in the mass fractions
of $\mathrm{HO_{2}}$ and $\mathrm{H_{2}O_{2}}$. Furthermore, the
aforementioned discrepancy in the leading-shock location is reflected
in these results. Nevertheless, there is very good agreement between
the $p=3$ solution and the reference solution, illustrating the ability
of the developed formulation to obtain stable and accurate detonation
results on coarser meshes.

\begin{figure}
\begin{centering}
\subfloat[\label{fig:1D_detonation_mass_fractions_1}]{\includegraphics[width=0.45\columnwidth]{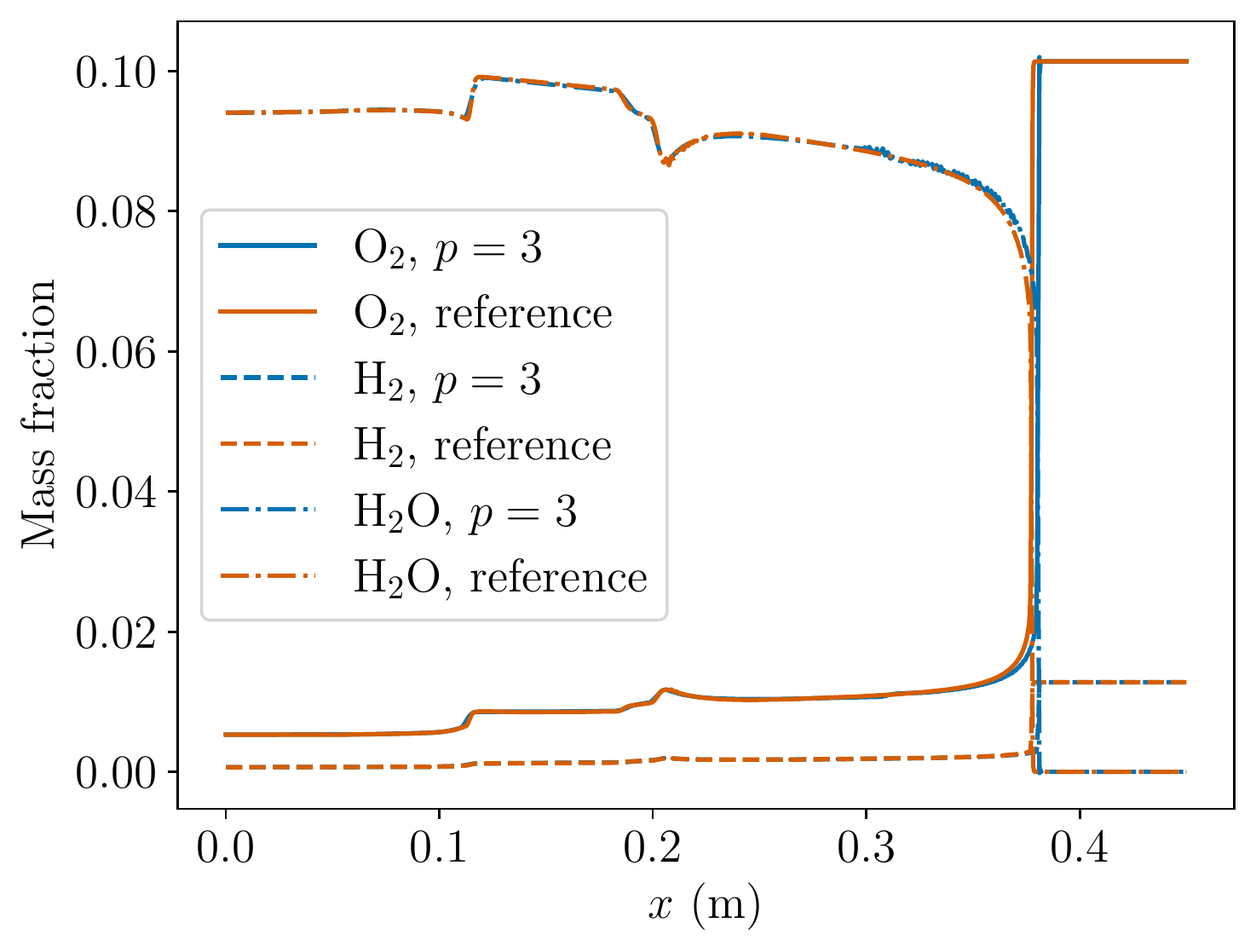}}\hfill{}\subfloat[\label{fig:1D_detonation_mass_fractions_2}]{\includegraphics[width=0.45\columnwidth]{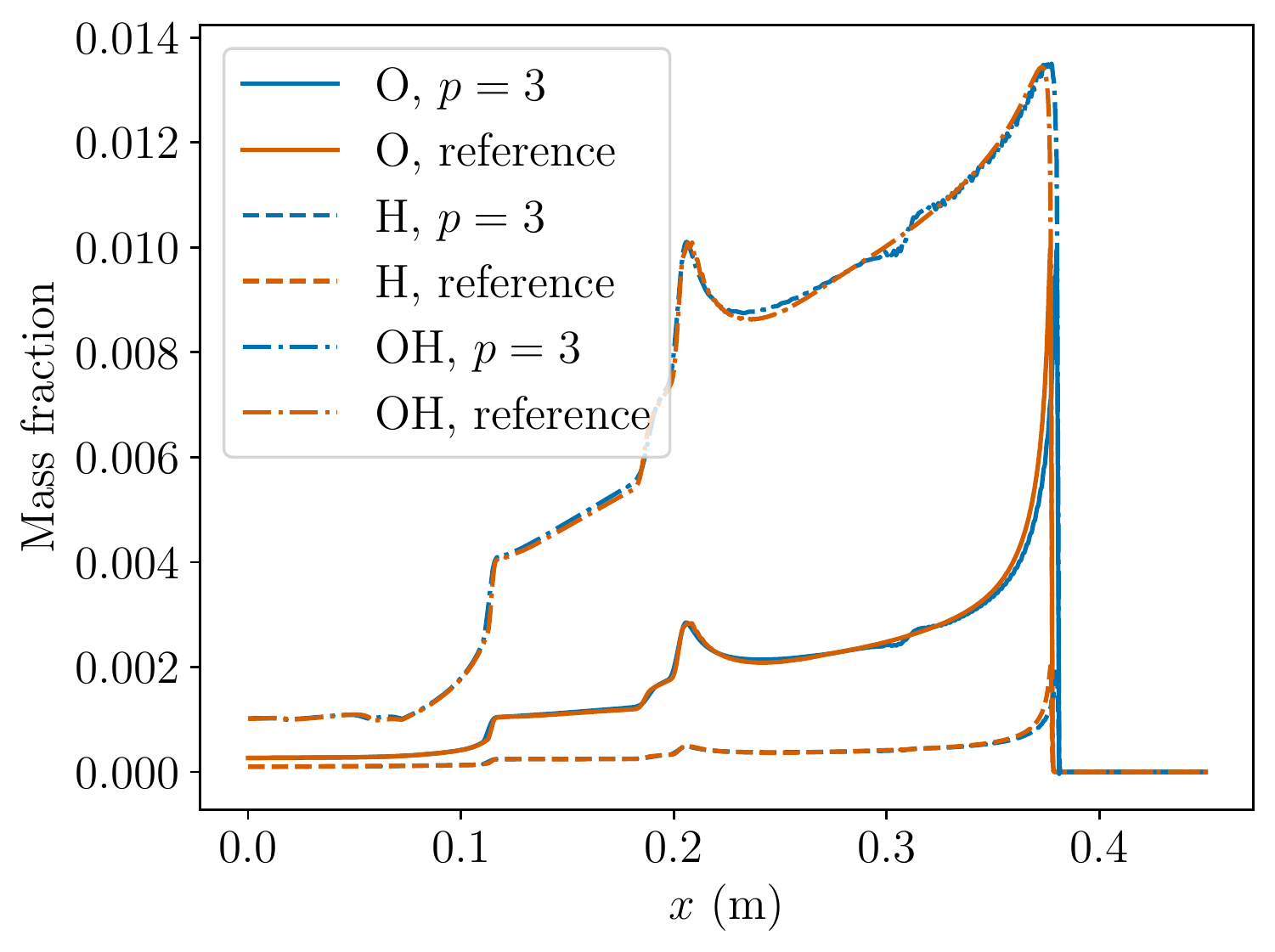}}\hfill{}\subfloat[\label{fig:1D_detonation_mass_fractions_3}]{\centering{}\includegraphics[width=0.45\columnwidth]{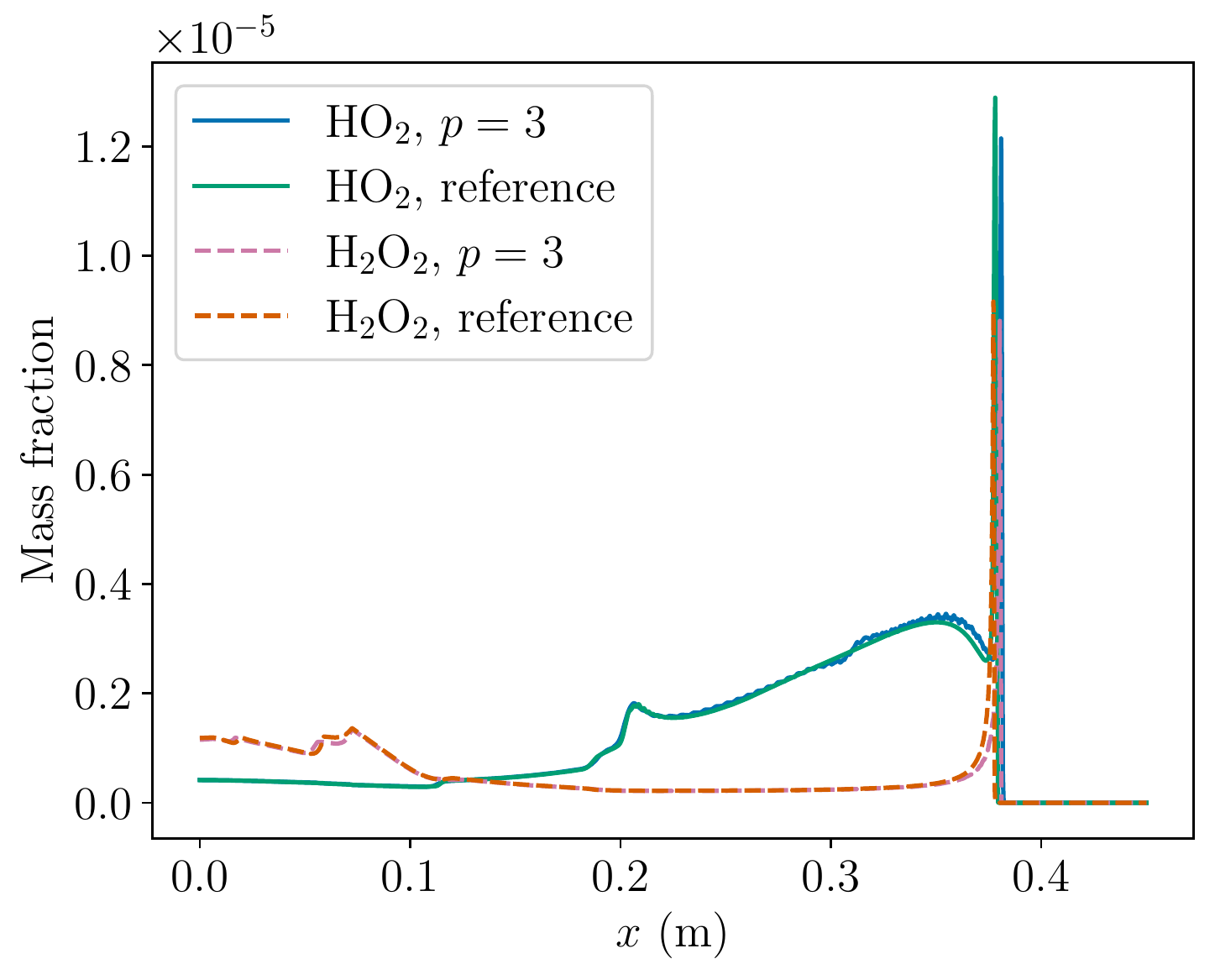}}
\par\end{centering}
\caption{\label{fig:1D_detonation_mass_fractions} Comparison of $p=3$ predictions
of species mass fractions with those of the reference solution for
the one-dimensional hydrogen detonation test case. The mesh spacing
is $h=4.5\times10^{-4}$ m, which is fives time larger than for the
reference solution.}
\end{figure}

Finally, Figure~\ref{fig:1D_detonation_conservation_error} gives
the percent error in conservation of mass, energy, and atomic elements
for $p=1$ as a representative example, calculated every $0.235\;\mu\mathrm{s}$
(for a total of 1000 samples). $\mathsf{N}_{O}$, $\mathsf{N}_{H}$,
and $\mathsf{N}_{Ar}$ denote the total numbers of oxygen, hydrogen,
and argon atoms in the mixture. The error remains negligible throughout
the simulation, confirming that the methodology is conservative.

\begin{figure}[H]
\begin{centering}
\includegraphics[width=0.6\columnwidth]{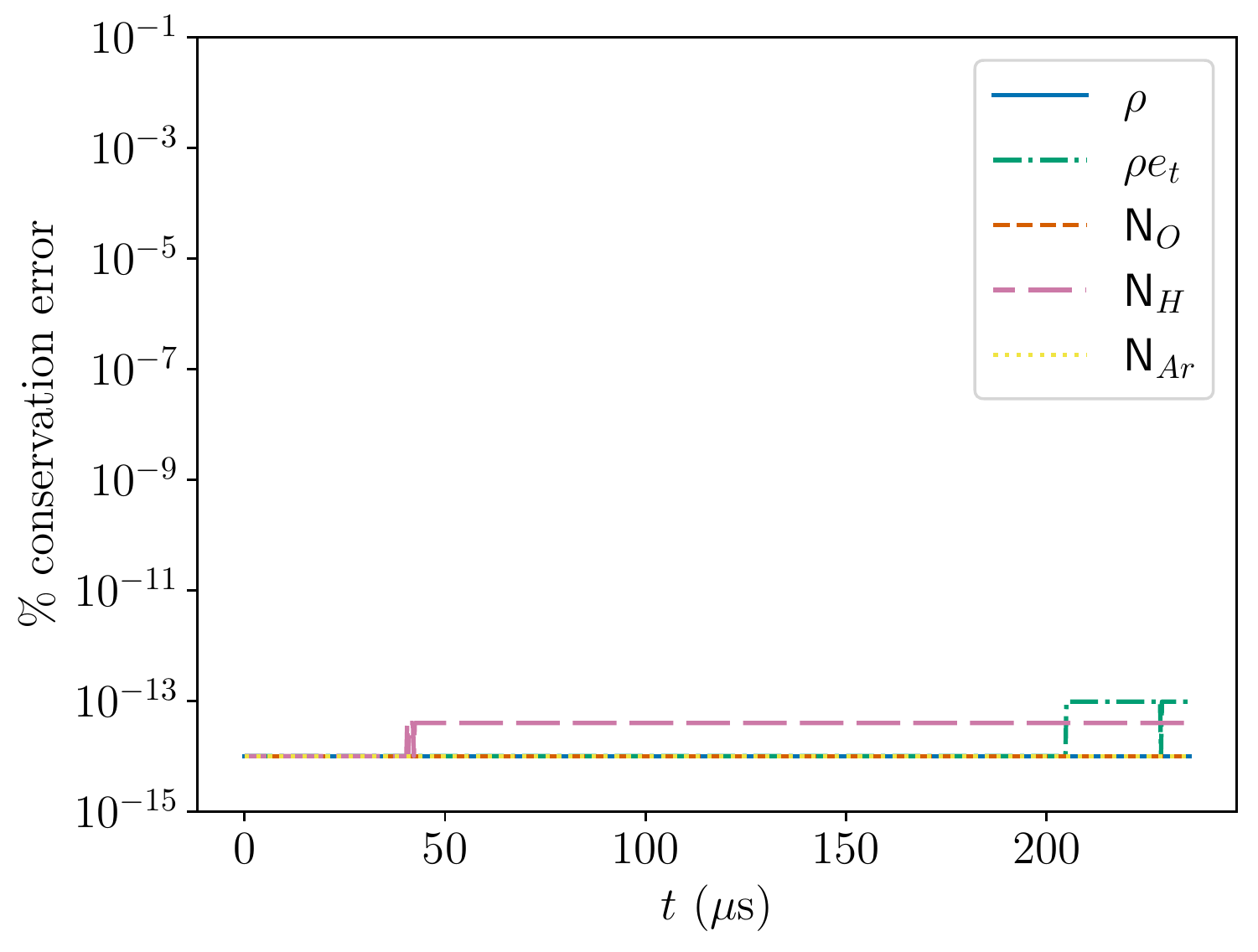}
\par\end{centering}
\caption{\label{fig:1D_detonation_conservation_error}Percent error in conservation
of mass, energy, and atomic elements for the $p=1$ calculation with
$h=4.5\times10^{-4}$ m. The initial conditions for this one-dimensional
hydrogen detonation problem are given in Equation~(\ref{eq:detonation-1d-initialization}).}
\end{figure}

\section{Concluding remarks}

In this paper, we introduced a positivity-preserving and entropy-bounded
DG methodology for the chemically reacting, compressible Euler equations.
The methodology builds on the fully conservative, high-order DG method
previously developed by two of the authors~\citep{Joh20_2}, which
does not generate spurious pressure oscillations in smooth flow regions
or across material interfaces when the temperature is continuous.
As a prerequisite for the proposed formulation, we proved a minimum
entropy principle for the compressible, multicomponent, chemically
reacting Euler equations, which follows from the proof by Gouasmi
et al.~\citep{Gou20} of a minimum entropy principle for the compressible,
multicomponent, nonreacting Euler equations. 

In this first part of our two-part paper, we focused on the one-dimensional
case. A simple linear-scaling limiter ensures that the solution at
a given time step is admissible (i.e., species concentrations are
nonnegative, density is positive, pressure is positive, and entropy
is greater than some lower bound). A requirement of the limiter is
that the element average of the state is itself admissible, which
we showed to be true under the following conditions: (a) a time-step-size
constraint is satisfied, (b) an invariant-region-preserving numerical
flux is employed, and (c) certain pointwise values of the solution
at the previous time step are admissible. Both a global entropy bound
and a local entropy bound were discussed. Since the linear scaling
does not completely eliminate small-scale oscillations, artificial
viscosity is employed in tandem. We also detailed how to maintain
compatibility between the proposed framework and the pressure-equilibrium-maintaining
discretization in~\citep{Joh20_2}. The temporal integration of the
convection operator is decoupled from that of the stiff chemical source
term via Strang splitting. To guarantee satisfaction of the minimum
entropy principle in the reaction step, we developed an entropy-stable
discontinuous Galerkin method based on diagonal-norm summation-by-parts
operators for temporal integration of the source term, which involved
the derivation of an entropy-conservative two-point numerical state
function.

The methodology was applied to canonical one-dimensional test cases.
The first two entailed nonreacting flows: advection of a smooth, hydrogen-oxygen
thermal bubble and nitrogen-helium shock-tube flow. In the former,
we demonstrated optimal convergence of the methodology and sufficient
preservation of pressure equilibrium. In the latter, we observed the
following:
\begin{itemize}
\item The positivity-preserving limiter (which does not consider an entropy
bound) prevents the solver from crashing, but, in the absence of additional
stabilization, gives rise to large-scale oscillations. Such instabilities
are substantially larger than those typically seen in the monocomponent,
calorically perfect case, illustrating the challenges of stabilizing
computations of multicomponent flows with realistic thermodynamics. 
\item The entropy limiter, on the other hand, considerably reduces the magnitude
of the aforementioned instabilities, suggesting that the relative
benefit of the entropy limiter is much greater in the multicomponent,
thermally perfect case. Small-scale oscillations are still present,
but these can be cured with artificial viscosity. Note that the inability
to completely eliminate oscillations is a well-known property of the
linear-scaling limiter~\citep{Zha10,Zha17,Lv15_2}. Furthermore,
unless a very fine resolution is employed, artificial viscosity alone
is insufficient for robustness.
\item Enforcing a local entropy bound can be more effective than enforcing
a global entropy bound when the entropy varies significantly throughout
the domain.
\end{itemize}
In our final test case, we computed a moving hydrogen-oxygen detonation
wave diluted in Argon, demonstrating that the developed methodology
can accurately and robustly calculate a chemically reacting flow with
detailed chemistry using high-order polynomial approximations on relatively
coarse meshes. Conservation of mass, total energy, and atomic elements
was confirmed.

In Part II~\citep{Chi22_2}, we will extend our formulation to multiple
dimensions. In our developed multidimensional framework, restrictions
on the physical modeling, geometry, numerical flux function, and quadrature
rules are milder than those currently in the literature. Complex two-
and three-dimensional detonations will be accurately computed in a
stable manner using high-order polynomial approximations.

\section*{Acknowledgments}

This work is sponsored by the Office of Naval Research through the
Naval Research Laboratory 6.1 Computational Physics Task Area. 

\bibliographystyle{elsarticle-num}
\bibliography{citations}

\appendix

\section{Supporting lemmas associated with specific entropy\label{sec:supporting-lemmas-specific-entropy}}

We restate two key results established by Zhang and Shu~\citep{Zha12_2}.
We assume throughout that $\rho>0$, $C_{i}>0$, and $T>0$.
\begin{lem}[\citep{Zha12_2}]
\label{lem:quasi-concavity-specific-entropy}$s(y)$ is quasi-concave,
such that
\begin{equation}
s\left(\beta y_{1}+(1-\beta)y_{2}\right)>\min\left\{ s(y_{1}),s(y_{2})\right\} ,
\end{equation}
where $y_{1}\neq y_{2}$ and $0<\beta<1$.
\end{lem}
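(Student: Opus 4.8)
The plan is to deduce the strict quasi-concavity of $s$ from the (strict) concavity of the map $y\mapsto\rho s$, i.e.\ from the convexity of the mathematical entropy $U=-\rho s$ recalled in Section~\ref{sec:minimum-entropy-principle}; this mirrors the structure of the monocomponent argument of Zhang and Shu~\citep{Zha12_2} but goes through verbatim in the multicomponent case. Throughout I work on the open, convex admissible set $\mathcal{A}=\{y:\rho>0,\ C_i>0,\ i=1,\ldots,n_s,\ T>0\}$; since each defining inequality cuts out a convex region (the set $\{T>0\}$ being convex because $\rho u^{*}$ is concave, as shown in~\ref{sec:concavity-of-shifted-internal-energy}), any convex combination of admissible states is again admissible. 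In particular $y_\beta:=\beta y_1+(1-\beta)y_2\in\mathcal{A}$, so $s(y_\beta)$ is well-defined.

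First I would record the key structural fact: for $U=-\rho s$ the map to entropy variables is one-to-one and symmetrizes the system, so $U$ is strictly convex on $\mathcal{A}$; this is exactly the multicomponent convexity result of Gouasmi et al.~\citep{Gou20,Gou20_2} (the same one underlying the concavity of $\chi_\sigma$). For an arbitrary constant $c\in\mathbb{R}$, define $\chi_c(y)=\rho s-c\rho=-\bigl(U(y)+c\rho\bigr)$. Because $c\rho$ is linear in the conservative variables and $U$ is strictly convex, $\chi_c$ is strictly concave on $\mathcal{A}$; and since $\rho>0$, we have $\chi_c(y)\geq 0$ if and only if $s(y)\geq c$, with the analogous equivalence for strict inequalities.

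Next I would set $c=\min\{s(y_1),s(y_2)\}$, so that $\chi_c(y_1)=\rho_1\bigl(s(y_1)-c\bigr)\geq 0$ and $\chi_c(y_2)=\rho_2\bigl(s(y_2)-c\bigr)\geq 0$. Applying strict concavity of $\chi_c$ at the distinct points $y_1\neq y_2$ with $0<\beta<1$ gives
\begin{equation*}
\chi_c(y_\beta)>\beta\,\chi_c(y_1)+(1-\beta)\,\chi_c(y_2)\geq 0.
\end{equation*}
Since the density of $y_\beta$ is $\rho_\beta=\beta\rho_1+(1-\beta)\rho_2>0$, the inequality $\chi_c(y_\beta)=\rho_\beta\bigl(s(y_\beta)-c\bigr)>0$ forces $s(y_\beta)>c=\min\{s(y_1),s(y_2)\}$, which is the claim. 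Equivalently, one may phrase the argument through super-level sets: $\{s\geq c\}=\{\chi_c\geq 0\}$ is a super-level set of the concave function $\chi_c$, hence convex, and quasi-concavity is precisely the convexity of all such sets; strict concavity then upgrades the statement to the strict inequality.

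The main obstacle I anticipate is establishing the strictness — that $U=-\rho s$ is strictly, and not merely weakly, convex on $\mathcal{A}$ in the thermally perfect, multicomponent setting, where $s$ is assembled from the NASA-polynomial integrals $\int c_{v,i}/\tau\,d\tau$ and the mixing terms $-R_i\log(C_i/C_{\mathrm{ref}})$. Weak convexity alone yields only $s(y_\beta)\geq\min\{s(y_1),s(y_2)\}$; to recover the strict inequality I would verify that the Hessian of $U$ is positive definite on $\mathcal{A}$ (equivalently, that the Jacobian of the entropy-variable map is nonsingular there), which is the multicomponent analogue of the classical monocomponent computation and is guaranteed by the symmetrization property. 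Should one wish to avoid invoking strict convexity globally, the alternative is to track the concavity deficit along the segment $[y_1,y_2]$ and argue that equality $\chi_c(y_\beta)=0$ can hold only when $y_1=y_2$.
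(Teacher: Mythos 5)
Your proposal is correct and is essentially the paper's own argument: both rest on the strict convexity of $U=-\rho s$ (cited from Gouasmi et al.), the linearity of $\rho$ in the conservative state, and the positivity of density. Your $\chi_c=\rho s-c\rho$ formulation is only a repackaging — expanding the strict-concavity inequality for $\chi_c$ with $c=\min\{s(y_1),s(y_2)\}$ reproduces exactly the paper's chain of Jensen-plus-min-bound inequalities, and $\chi_c$ is precisely the paper's $\chi_\sigma$ from the limiting section.
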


\begin{proof}
Since $U=-\rho s$ is strictly convex~\citep{Gou20}, $U$ satisfies
Jensen's inequality:
\[
U\left(\beta y_{1}+(1-\beta)y_{2}\right)<\beta U(y_{1})+(1-\beta)U(y_{2}).
\]
Therefore,
\begin{align*}
-\rho\left(\beta y_{1}+(1-\beta)y_{2}\right)s\left(\beta y_{1}+(1-\beta)y_{2}\right) & <-\beta\rho(y_{1})s(y_{1})-(1-\beta)\rho(y_{2})s(y_{2})\\
 & \leq-\beta\rho(y_{1})\min\left\{ s(y_{1}),s(y_{2})\right\} -(1-\beta)\rho(y_{2})\min\left\{ s(y_{1}),s(y_{2})\right\} \\
 & =-\left[\beta\rho(y_{1})+(1-\beta)\rho(y_{2})\right]\min\left\{ s(y_{1}),s(y_{2})\right\} \\
 & =-\rho\left(\beta y_{1}+(1-\beta)y_{2}\right)\min\left\{ s(y_{1}),s(y_{2})\right\} ,
\end{align*}
where the last equality is due to linearity. We then have 
\[
s\left(\beta y_{1}+(1-\beta)y_{2}\right)>\min\left\{ s(y_{1}),s(y_{2})\right\} 
\]
\end{proof}
\begin{lem}[\citep{Zha12_2}]
\label{lem:specific-entropy-of-solution-average}For given $y_{\kappa}$
and $\overline{y}_{\kappa}$ as defined in Equations~(\ref{eq:solution-approximation})
and~(\ref{eq:solution-element-average}), respectively, we have
\begin{equation}
s\left(\overline{y}_{\kappa}\right)\geq\min_{x\in\kappa}s\left(y(x)\right).
\end{equation}
\end{lem}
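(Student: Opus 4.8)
The plan is to mirror the argument used in the proof of Lemma~\ref{lem:quasi-concavity-specific-entropy}, replacing the two-point convex combination by the integral average that defines $\overline{y}_{\kappa}$. The essential input is again the strict convexity of $U=-\rho s$ established in~\citep{Gou20}, now invoked through Jensen's inequality in its integral form rather than the two-point form. First I would apply Jensen's inequality to the convex function $U$ and the normalized measure $dx/|\kappa|$ on $\kappa$:
\[
U\left(\overline{y}_{\kappa}\right)=U\left(\frac{1}{\left|\kappa\right|}\int_{\kappa}y\,dx\right)\leq\frac{1}{\left|\kappa\right|}\int_{\kappa}U\left(y(x)\right)dx.
\]
Writing $U=-\rho s$ and reversing the sign, this reads $\rho\left(\overline{y}_{\kappa}\right)s\left(\overline{y}_{\kappa}\right)\geq\frac{1}{\left|\kappa\right|}\int_{\kappa}\rho\left(y(x)\right)s\left(y(x)\right)dx$.

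Next, since $s\left(y(x)\right)\geq\min_{x'\in\kappa}s\left(y(x')\right)$ pointwise and $\rho>0$ by assumption, I would bound the integrand from below to obtain
\[
\frac{1}{\left|\kappa\right|}\int_{\kappa}\rho\left(y(x)\right)s\left(y(x)\right)dx\geq\left(\min_{x'\in\kappa}s\left(y(x')\right)\right)\frac{1}{\left|\kappa\right|}\int_{\kappa}\rho\left(y(x)\right)dx.
\]
The one structural fact I need beyond convexity is that density is a linear function of the conservative state (it is the linear combination $\rho=\sum_{i=1}^{n_{s}}W_{i}C_{i}$ of the concentration components), so that averaging and taking density commute: $\frac{1}{\left|\kappa\right|}\int_{\kappa}\rho\left(y(x)\right)dx=\rho\left(\overline{y}_{\kappa}\right)$. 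Combining the two displayed inequalities then gives $\rho\left(\overline{y}_{\kappa}\right)s\left(\overline{y}_{\kappa}\right)\geq\rho\left(\overline{y}_{\kappa}\right)\min_{x'\in\kappa}s\left(y(x')\right)$, and dividing by the positive quantity $\rho\left(\overline{y}_{\kappa}\right)$ yields the claim.

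The main point to handle carefully—rather than a genuine obstacle—is this linearity-of-density step, since it is precisely what permits cancelling the common density weight on both sides; it relies on $\rho$ depending only on the linear concentration entries of $y$, not on the momentum or total-energy components. I would also flag the division by $\rho\left(\overline{y}_{\kappa}\right)$, which requires $\rho\left(\overline{y}_{\kappa}\right)>0$; this is guaranteed because the average inherits the positivity of density already assumed for the pointwise states (indeed $\overline{y}_{\kappa}$ lies in $\mathcal{G}_{\sigma}$ under the hypotheses in play). No appeal to the polynomial structure of $y_{\kappa}$ is needed beyond its being the integrand whose average defines $\overline{y}_{\kappa}$, so the argument is valid verbatim in the multicomponent setting.
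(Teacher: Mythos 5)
Your proposal is correct and matches the paper's own proof: both rest on the integral form of Jensen's inequality applied to the convex function $U=-\rho s$, the pointwise lower bound $s\geq\min_{\kappa}s$, and the linearity of $\rho$ in the conservative state (which the paper uses implicitly in writing $\overline{\rho}_{\kappa}s(\overline{y}_{\kappa})=-U(\overline{y}_{\kappa})$), followed by division by the positive average density. Your explicit flagging of the linearity-of-density step and the positivity of $\rho(\overline{y}_{\kappa})$ simply makes visible what the paper leaves tacit.
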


\begin{proof}
Let $\overline{\rho}_{\kappa}=\frac{1}{|\kappa|}\int_{\kappa}\rho(y(x))dx$.
With $U=-\rho s$, we have
\begin{align*}
\overline{\rho}_{\kappa}s\left(\overline{y}_{\kappa}\right) & =-U\left(\overline{y}_{\kappa}\right)\\
 & \geq-\frac{1}{|\kappa|}\int_{\kappa}U(y(x))dx\\
 & =\frac{1}{|\kappa|}\int_{\kappa}\rho(y(x))s(y(x))dx\\
 & \geq\overline{\rho}_{\kappa}\min_{x\in\kappa}s\left(y(x)\right),
\end{align*}
where the second line is due to Jensen's inequality since $U$ is
convex. 
\end{proof}

\section{Concavity of shifted internal energy\label{sec:concavity-of-shifted-internal-energy}}
\begin{lem}
\label{lem:concavity-of-shifted-internal-energy}The ``shifted''
internal energy per unit volume, 
\begin{equation}
\rho u^{*}=\rho u-\sum_{i=1}^{n_{s}}\rho_{i}b_{i0},\label{eq:shifted-internal-energy-per-unit-volume}
\end{equation}
is a concave function of the state for $\rho>0$, $T>0$.
\end{lem}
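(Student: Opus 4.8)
The plan is to express $\rho u^{*}$ directly in terms of the conservative state variables, where it becomes the sum of an affine function and a concave ``quadratic-over-linear'' term. Writing $m_{k}=\rho v_{k}$ for the momentum components and using $\rho e_{t}=\rho u+\tfrac{1}{2}\sum_{k}\rho v_{k}v_{k}$, I would first substitute to obtain
\[
\rho u^{*}=\rho e_{t}-\frac{1}{2}\frac{\sum_{k=1}^{d}m_{k}^{2}}{\rho}-\sum_{i=1}^{n_{s}}\rho_{i}b_{i0}.
\]
The key observation is that the complicated temperature dependence of $u$ never appears explicitly: $\rho u=\rho e_{t}-\tfrac{1}{2}\lvert m\rvert^{2}/\rho$ is purely algebraic in the conservative variables, so the concavity question reduces to inspecting the three terms above.

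Next I would dispatch the affine pieces. In the state vector $y$, the component $\rho e_{t}$ is itself a coordinate, each partial density $\rho_{i}=W_{i}C_{i}$ is a linear function of $C_{i}$, and $\rho=\sum_{i}W_{i}C_{i}$ is likewise linear in the concentrations. Hence $\rho e_{t}-\sum_{i}\rho_{i}b_{i0}$ is an affine function of $y$, which is simultaneously convex and concave and therefore cannot spoil concavity.

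The only substantive step is the middle term, and this is where I expect the main (though mild) obstacle to lie. I would invoke the standard fact that the quadratic-over-linear map $(m,\rho)\mapsto\lvert m\rvert^{2}/\rho$ is jointly convex on $\mathbb{R}^{d}\times\mathbb{R}_{>0}$; this follows either by recognizing it as the perspective of the convex function $m\mapsto\lvert m\rvert^{2}$, or by a direct Hessian computation (for $d=1$ the Hessian is $\bigl(\begin{smallmatrix}2/\rho & -2m/\rho^{2}\\ -2m/\rho^{2} & 2m^{2}/\rho^{3}\end{smallmatrix}\bigr)$, which has nonnegative trace and vanishing determinant, hence is positive semidefinite, and the general case reduces to this). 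Composing with the affine map $y\mapsto(m_{1},\ldots,m_{d},\sum_{i}W_{i}C_{i})$ preserves convexity, so $\tfrac{1}{2}\lvert m\rvert^{2}/\rho$ is convex in $y$ on the region $\rho>0$, and its negative is concave.

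Finally, since a sum of a concave function and an affine function is concave, $\rho u^{*}$ is concave in the state, as claimed. Everything beyond the joint convexity of the quadratic-over-linear term is pure linearity, and I note that the $T>0$ hypothesis is needed only to keep us inside the physically admissible region (so that the state indeed corresponds to a genuine thermodynamic state), not for the concavity argument itself.
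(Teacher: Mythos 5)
Your proof is correct, and it takes a genuinely different route from the paper. The paper proves the lemma by brute force: it computes the full Hessian of $\rho u$ with respect to the conservative state (fixing $d=3$) and verifies negative semidefiniteness by enumerating all principal minors and checking that the odd-order ones are nonpositive and the even-order ones are nonnegative --- a fairly intricate case analysis over the several block structures that can arise. You instead exploit the decomposition $\rho u^{*}=\rho e_{t}-\tfrac{1}{2}\lvert m\rvert^{2}/\rho-\sum_{i}\rho_{i}b_{i0}$, observe that the first and last terms are affine in the state, and reduce everything to the joint convexity of the quadratic-over-linear map $(m,\rho)\mapsto\lvert m\rvert^{2}/\rho$ (the perspective of $\lvert m\rvert^{2}$), composed with the linear map $y\mapsto(m,\sum_{i}W_{i}C_{i})$. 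Both arguments ultimately rest on the same algebraic fact --- that $\rho u$ is independent of the thermodynamic closure once written in conservative variables; indeed, the paper's Hessian entries are precisely the second derivatives of $\rho e_{t}-\lvert m\rvert^{2}/(2\rho)$ --- but your route buys brevity, works uniformly in any spatial dimension $d$ without modification, and avoids the principal-minor bookkeeping entirely, at the cost of invoking the (standard) perspective-function lemma; the paper's computation is more self-contained and displays the Hessian explicitly, which can be independently useful. Your closing remark is also consistent with the paper: neither argument actually uses $T>0$, only $\rho>0$.
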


\begin{proof}
Throughout this proof, we work with $d=3$ and, without loss of generality,
a re-ordered state vector where the species concentrations are replaced
by the partial densities:
\[
y=\left(\rho v_{1},\ldots,\rho v_{d},\rho_{1},\ldots,\rho_{n_{s}},\rho e_{t}\right)^{T}.
\]
It is well-known that a function is concave if and only if its Hessian
is negative semidefinite. Here, $\mathcal{H}=\frac{d^{2}\left(\rho u^{*}\right)}{dy^{2}}$
denotes the Hessian of $\rho u^{*}$, which is a symmetric matrix
of size $m$. We observe that since the second term on the RHS of
Equation~(\ref{eq:shifted-internal-energy-per-unit-volume}) is linear
with respect to the state, $\frac{d^{2}}{dy^{2}}\left(\sum_{i=1}^{n_{s}}\rho_{i}b_{i0}\right)$
gives a matrix of zeros; thus, $\mathcal{H}=\frac{d^{2}\left(\rho u\right)}{dy^{2}}$.
There exist various ways to show negative semidefiniteness. One approach
is to check the signs of the principal minors~\citep{Bol15,Dub09},
where an $l$th-order principal minor, $\mathcal{M}_{l}$, of $\mathcal{H}$
is the determinant of a submatrix obtained by eliminating $m-l$ rows
and the corresponding $m-l$ columns from $\mathcal{H}$. Specifically,
$\mathcal{H}$ is negative semidefinite if and only if all even-order
principal minors are nonnegative and all odd-order principal minors
are nonpositive. 

We start with some useful relations:
\begin{align*}
 & \frac{\partial^{2}\left(\rho u\right)}{\partial\left(\rho v_{k}\right)\partial\left(\rho v_{k}\right)}=-\frac{1}{\rho},\\
 & \frac{\partial^{2}\left(\rho u\right)}{\partial\left(\rho v_{k}\right)\left(\partial\rho v_{l}\right)}=0,\quad k\neq l,\\
 & \frac{\partial^{2}\left(\rho u\right)}{\partial\rho_{i}\partial\rho_{j}}=-\frac{\left|v\right|^{2}}{\rho},\\
 & \frac{\partial^{2}\left(\rho u\right)}{\partial\rho_{i}\partial\left(\rho v_{k}\right)}=\frac{v_{k}}{\rho},\\
 & \frac{\partial^{2}\left(\rho u\right)}{\partial\rho_{i}\partial\left(\rho e_{t}\right)}=\frac{\partial^{2}\left(\rho u\right)}{\partial\left(\rho v_{k}\right)\partial\left(\rho e_{t}\right)}=\frac{\partial^{2}\left(\rho u\right)}{\partial\left(\rho e_{t}\right)\partial\left(\rho e_{t}\right)}=0.
\end{align*}
$\mathcal{H}$ can then be written as
\begin{equation}
\mathfrak{\mathcal{H}}=\left(\begin{array}{ccccccc}
-\frac{1}{\rho} & 0 & 0 & \frac{v_{1}}{\rho} & \ldots & \frac{v_{1}}{\rho} & 0\\
0 & -\frac{1}{\rho} & 0 & \frac{v_{2}}{\rho} & \ldots & \frac{v_{2}}{\rho} & \vdots\\
0 & 0 & -\frac{1}{\rho} & \frac{v_{3}}{\rho} & \ldots & \frac{v_{3}}{\rho} & 0\\
\frac{v_{1}}{\rho} & \frac{v_{2}}{\rho} & \frac{v_{3}}{\rho} & -\frac{\left|v\right|^{2}}{\rho} & \ldots & -\frac{\left|v\right|^{2}}{\rho} & 0\\
\vdots & \vdots & \vdots & \vdots & \ddots & \vdots & \vdots\\
\frac{v_{1}}{\rho} & \frac{v_{2}}{\rho} & \frac{v_{3}}{\rho} & -\frac{\left|v\right|^{2}}{\rho} & \ldots & -\frac{\left|v\right|^{2}}{\rho} & 0\\
0 & \ldots & 0 & 0 & \ldots & 0 & 0
\end{array}\right).\label{eq:Hessian-internal-energy}
\end{equation}
The $m$ first-order principal minors, $\mathcal{M}_{1}$, are simply
the diagonal entries, which are all nonpositive. The principal minors
of order greater than one take the following forms:
\begin{align}
\mathcal{M}_{l}^{0} & =\det\left(\begin{array}{cccc}
a_{1,1} & \ldots & a_{1,l-1} & 0\\
\vdots & \ddots & \vdots & \vdots\\
a_{l-1,1} & \ldots & a_{l-1,l-1} & \vdots\\
0 & \ldots & \ldots & 0
\end{array}\right),\;l=2,\ldots,m,\label{eq:principal-minor-specific-form-0}\\
\mathcal{M}_{l}^{1} & =\det\left(\begin{array}{ccc}
-\frac{1}{\rho} & 0 & 0\\
0 & \ddots & 0\\
0 & 0 & -\frac{1}{\rho}
\end{array}\right),\;l=2,\ldots,d,\label{eq:principal-minor-specific-form-1}\\
\mathcal{M}_{l}^{2} & =\det\left(B_{l}\right),B_{l}=\left(\begin{array}{ccc}
-\frac{|v|^{2}}{\rho} & \ldots & -\frac{|v|^{2}}{\rho}\\
\vdots & \ddots & \vdots\\
-\frac{|v|^{2}}{\rho} & \ldots & -\frac{|v|^{2}}{\rho}
\end{array}\right),\;l=2,\ldots,n_{s},\label{eq:principal-minor-specific-form-2}\\
\mathcal{M}_{l}^{3} & =\det\left(C_{l}\right)=\det\left(\begin{array}{c|c}
C_{1,1} & C_{1,2}\\
\hline C_{2,1} & B_{q}
\end{array}\right)\nonumber \\
 & =\det\left(\begin{array}{c|c}
C_{1,1} & C_{1,2}\\
\hline C_{2,1} & \begin{array}{ccc}
-\frac{|v|^{2}}{\rho} & \ldots & -\frac{|v|^{2}}{\rho}\\
\vdots & \ddots & \vdots\\
-\frac{|v|^{2}}{\rho} & \ldots & -\frac{|v|^{2}}{\rho}
\end{array}
\end{array}\right),\;q=1,\ldots,n_{s};\;l=q+1,\ldots,q+d.\label{eq:principal-minor-specific-form-3}
\end{align}
$\mathcal{M}_{l}^{0}$ is zero due to the row of zeros (note that
the $m$th-order principal minor, $\mathcal{M}_{m}=\det\left(\mathcal{H}\right)$,
takes this form); $\mathcal{M}_{l}^{1}$ is negative for $l$ odd
and positive for $l$ even; and $\mathcal{M}_{l}^{2}$ is zero due
to linear dependence of the rows. For $\mathcal{M}_{l}^{3}$, the
corresponding submatrix, $C_{l}$, is written in block-matrix form,
where the lower-right block is a matrix of size $q$ of the form in
Equation~(\ref{eq:principal-minor-specific-form-2}). We consider
two cases: $q=1$ and $q\geq1$. In the latter case, $\mathcal{M}_{l}^{3}=0$
since the last $q$ rows of $C_{l}$ are repeated. In the former,
we first consider $l=2$, which yields 
\[
\mathcal{M}_{2}^{3}=\det\left(\begin{array}{cc}
-\frac{1}{\rho} & \frac{v_{k}}{\rho}\\
\frac{v_{k}}{\rho} & -\frac{\left|v\right|^{2}}{\rho}
\end{array}\right)=\frac{\left|v\right|^{2}}{\rho^{2}}-\frac{v_{k}^{2}}{\rho^{2}},\;k=1,\ldots,d,
\]
which is nonnegative since $\left|v\right|^{2}=\sum_{i}v_{i}^{2}\geq v_{k}^{2}\geq0$.
For $l=3$, we have
\[
\mathcal{M}_{4}^{3}=\det\left(\begin{array}{ccc}
-\frac{1}{\rho} & 0 & \frac{v_{j}}{\rho}\\
0 & -\frac{1}{\rho} & \frac{v_{k}}{\rho}\\
\frac{v_{j}}{\rho} & \frac{v_{k}}{\rho} & -\frac{\left|v\right|^{2}}{\rho}
\end{array}\right)=-\frac{\left|v\right|^{2}}{\rho^{3}}+\frac{v_{j}^{2}}{\rho^{3}}+\frac{v_{k}^{2}}{\rho^{3}},\;j,k=1,\ldots,d;\;j\neq k.
\]
which is nonpositive. Finally, $l=4$ gives
\[
\mathcal{M}_{4}^{3}=\det\left(\begin{array}{cccc}
-\frac{1}{\rho} & 0 & 0 & \frac{v_{1}}{\rho}\\
0 & -\frac{1}{\rho} & 0 & \frac{v_{2}}{\rho}\\
0 & 0 & -\frac{1}{\rho} & \frac{v_{3}}{\rho}\\
\frac{v_{1}}{\rho} & \frac{v_{2}}{\rho} & \frac{v_{3}}{\rho} & -\frac{\left|v\right|^{2}}{\rho}
\end{array}\right)=\left(\begin{array}{ccc}
\horzbar & \mathsf{R}_{1} & \horzbar\\
\horzbar & \mathsf{R}_{2} & \horzbar\\
\horzbar & \mathsf{R}_{3} & \horzbar\\
\horzbar & \mathsf{R}_{4} & \horzbar
\end{array}\right),
\]
where $\left\{ \mathsf{R}_{1},\mathsf{R}_{2},\mathsf{R}_{3},\mathsf{R}_{4}\right\} $
denotes the rows. We observe that $\mathsf{R}_{4}=-v_{1}\mathsf{R}_{1}-v_{2}\mathsf{R}_{2}-v_{3}\mathsf{R}_{3}$;
therefore, the rows are linearly dependent and $\mathcal{M}_{4}^{3}=0$.
As such, the principal minors satisfy the nonpositive/nonnegative
requirements for negative semidefiniteness of $\mathcal{H}$.
\end{proof}

\end{document}